\newcommand{\R}{{\mathbb R}}
\newcommand{\beq}{\begin{equation}}
\newcommand{\eeq}{\end{equation}}
\newcommand{\ben}{\begin{eqnarray}}
\newcommand{\een}{\end{eqnarray}}
\newcommand{\beno}{\begin{eqnarray*}}
\newcommand{\eeno}{\end{eqnarray*}}
\newtheorem{thm}{Theorem}[section]
\newtheorem{defi}[thm]{Definition}
\newtheorem{lem}[thm]{Lemma}
\newtheorem{coro}[thm]{Corollary}
\newtheorem{rmk}[thm]{Remark}
\begin{document}

\title[Biharmonic Supercritical Problems]{A Monotonicity Formula and a Liouville-type Theorem for a Fourth Order Supercritical Problem}
\author[J. D\'avila]{Juan D\'avila}
\address{\noindent J. D\'avila -
Departamento de Ingenier\'{\i}a Matem\'atica and CMM, Universidad
de Chile, Casilla 170 Correo 3, Santiago, Chile.}
\email{jdavila@dim.uchile.cl}

\author[L. Dupaigne]{Louis Dupaigne}
\address{\noindent L. Dupaigne-
LAMFA, UMR CNRS 7352, Universite Picardie Jules Verne, 33 rue St. leu, 80039 Amiens, France}

\email{louis.dupaigne@math.cnrs.fr}

\author[K. Wang]{ Kelei Wang}
 \address{\noindent K. Wang-
 Wuhan Institute of Physics and Mathematics,
The Chinese Academy of Sciences, Wuhan 430071, China.}
\email{wangkelei@wipm.ac.cn}

\author[J. Wei]{Juncheng Wei}
\address{\noindent J. Wei -
Department Of Mathematics, Chinese University Of Hong Kong,
Shatin, Hong Kong and Department of Mathematics, University of British Columbia, Vancouver, B.C., Canada, V6T 1Z2. } \email{wei@math.cuhk.edu.hk}

\begin{abstract}
We consider Liouville-type and partial regularity results for the
nonlinear fourth-order problem
$$ \Delta^2 u=|u|^{p-1}u\ \ \mbox{in} \ \R^n,$$
where $ p>1$ and $n\ge1$. We give a complete classification of
stable and finite Morse index solutions (whether positive or sign
changing), in the full exponent range. We also compute an upper
bound of the Hausdorff dimension of the singular set of extremal
solutions. Our approach is motivated by Fleming's tangent cone
analysis technique for minimal surfaces and Federer's dimension
reduction principle in partial regularity theory. A key tool is the
monotonicity formula for biharmonic equations.
\end{abstract}

\keywords{Monotonicity formula, stable or finite Morse index equations, biharmonic equations, partial regularity}

\subjclass{}

\maketitle

\date{}

\section{Introduction}
\setcounter{equation}{0}
We study the following model biharmonic superlinear elliptic equation
\begin{equation}\label{equation}
\Delta^2 u=|u|^{p-1}u \quad \mbox{in} \ \Omega,
\end{equation}
where $ \Omega \subset \R^n$ is a smoothly bounded domain or the
entire space and $p>1$ is a real number.  Inspired by the tangent
cone analysis in minimal surface theory, more precisely Fleming's
key observation that the existence of an entire nonplaner minimal
graph implies that of a singular
 area-minimizing cone (see his work on the Bernstein theorem
\cite{F}), we derive a monotonicity formula for solutions of
(\ref{equation}) to reduce the non-existence of nontrivial entire
solutions for the problem \eqref{equation}, to that of nontrivial
homogeneous solutions. Through this approach we give a complete
classification of stable solutions and those of finite Morse index,
whether positive or sign changing, when $\Omega=\R^{n}$ is  the
whole euclidean space. This in turn enables us to obtain partial
regularity as well as an estimate of the Hausdorff dimension of the
singular set of the extremal solutions in bounded domains.

\medskip

Let us first describe the monotonicity formula. Equation \eqref{equation} has two important features. It is variational, with energy functional given by
$$
\int \frac{1}{2}(\Delta u)^2-\frac{1}{p+1}|u|^{p+1}
$$
and it is invariant under the scaling transformation
$$
u^{\lambda}(x) = \lambda^{\frac4{p-1}}u(\lambda x).
$$
This suggests that the variations of the rescaled energy
$$
r^{4\frac{p+1}{p-1}-n}
\int_{B_r (x)}\frac{1}{2}(\Delta u)^2-\frac{1}{p+1}|u|^{p+1}
$$
with respect to the scaling parameter $r$ are meaningful. Augmented by the appropriate boundary terms, the above quantity is in fact nonincreasing. More precisely, take $u\in W_{loc}^{4,2}(\Omega) \cap L^{p+1}_{loc}(\Omega)$, fix $x \in \Omega$, let $0< r <R$ be such that $ B_r (x) \subset B_R (x) \subset \Omega$,  and define
\begin{eqnarray*}
 E(r;x,u)&:=&r^{4\frac{p+1}{p-1}-n}
\int_{B_r (x)}\frac{1}{2}(\Delta u)^2-\frac{1}{p+1}|u|^{p+1}\\
&&+\frac{2}{p-1}\left(n-2-\frac{4}{p-1}\right)r^{\frac{8}{p-1}+1-n}\int_{\partial
B_r (x)}u^2\\
&&+\frac{2}{p-1}\left(n-2-\frac{4}{p-1}\right)
\frac{d}{d r}\left(r^{\frac{8}{p-1}+2-n}\int_{\partial
B_r (x)}u^2\right)\\
&&+\frac{r^3}{2}\frac{d}{d r}\left[ r^{\frac{8}{p-1}+1-n}\int_{\partial
B_r (x)}\left(\frac{4}{p-1}r^{-1}u+\frac{\partial
u}{\partial r}\right)^2\right]\\
&&+\frac{1}{2}\frac{d}{d r}\left[r^{\frac{8}{p-1}+4-n}\int_{\partial
B_r (x)}\left(|\nabla u|^2-|\frac{\partial u}{\partial
r}|^2\right)\right]\\
&&+\frac{1}{2}r^{\frac{8}{p-1}+3-n}\int_{\partial
B_r (x)}\left(|\nabla u|^2-|\frac{\partial u}{\partial
r}|^2\right),
\end{eqnarray*}
where derivatives are taken in the sense of distributions.
Then, we have the following monotonicity formula.
\begin{thm}
\label{thm monotonicity}
Assume that
\begin{equation}
 \ n\geq 5, \qquad p>\frac{n+4}{n-4}.
\end{equation}
Let $u\in W_{loc}^{4,2}(\Omega) \cap L^{p+1}_{loc}(\Omega)$ be a weak solution of \eqref{equation}.
Then, $E(r;x,u)$ is non-decreasing in $r \in (0, R)$. Furthermore there is a constant $c(n,p)>0$
such that
\begin{equation}
\frac{d}{dr}E(r;0,u)\geq c(n,p)r^{-n+2+\frac{8}{p-1}}\int_{\partial
B_r}\left(\frac{4}{p-1}r^{-1}u+\frac{\partial u}{\partial
r}\right)^2.
\end{equation}
\end{thm}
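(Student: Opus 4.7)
The plan is to show that $\frac{d}{dr}E(r;0,u)$, after all cancellations, reduces to a single boundary integral equal to a positive multiple of $r^{2-n+8/(p-1)}\int_{\partial B_r}\bigl(\tfrac{4}{p-1}r^{-1}u+\partial_r u\bigr)^2$. The strategy follows the classical template for monotonicity formulas in problems with a scaling: rescale $u$ to the unit ball, differentiate in the scale parameter, integrate by parts using the PDE, and then match the non-sign-definite boundary terms that appear against the $r$-derivatives of the five explicit correction terms built into the definition of $E$.

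Exploiting the scale invariance, introduce the rescaled function $v(y;r) := r^{4/(p-1)} u(ry)$ for $y\in B_1$. The bulk piece of $E(r;0,u)$ is exactly
\[
I(r):=\int_{B_1}\Bigl[\tfrac12(\Delta_y v)^2-\tfrac{1}{p+1}|v|^{p+1}\Bigr]\,dy,
\]
and $v(\cdot;r)$ satisfies the same biharmonic equation on $B_1$. A direct computation gives $\partial_r v=r^{-1}\bigl(\tfrac{4}{p-1}v+y\cdot\nabla_y v\bigr)$. Differentiating $I(r)$ and integrating by parts twice, using the equation, yields
\[
I'(r)=\int_{\partial B_1}\Bigl[\Delta_y v\,\partial_\nu(\partial_r v)-\partial_\nu(\Delta_y v)\,\partial_r v\Bigr]\,d\sigma_y.
\]
On $|y|=1$ one has $r\partial_r v=r^{4/(p-1)+1}\bigl(\tfrac{4}{p-1}r^{-1}u+\partial_r u\bigr)(ry)$, which is the genesis of the squared quantity appearing in the conclusion.

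To isolate the perfect square, decompose every derivative restricted to $\partial B_r$ into its radial and tangential parts via $\Delta=\partial_r^2+\tfrac{n-1}{r}\partial_r+r^{-2}\Delta_{S^{n-1}}$ and $|\nabla u|^2=|\partial_r u|^2+r^{-2}|\nabla_{S^{n-1}}u|^2$. The radial-radial contribution is precisely a positive multiple of $\bigl(\tfrac{4}{p-1}r^{-1}u+\partial_r u\bigr)^2$, while the tangential and mixed contributions produce boundary integrals of $u^2$, of $u\partial_r u$, and of $|\nabla u|^2-|\partial_r u|^2$, weighted by explicit powers of $r$. Each of the five correction terms in the definition of $E$ is tailored so that its $r$-derivative (including the second $r$-derivatives arising from the correction terms that already contain a $\tfrac{d}{dr}$) reproduces one of these leftover contributions with opposite sign. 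The exponents $\tfrac{8}{p-1}+k-n$ for $k=1,\dots,4$ and the overall weights $\tfrac{2}{p-1}\bigl(n-2-\tfrac{4}{p-1}\bigr)$ are forced by matching coefficients in this cancellation.

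The main obstacle is the algebraic bookkeeping. After two integrations by parts, the boundary integrals involve $u$, $\partial_r u$, $\Delta u$, and $\partial_\nu\Delta u$ on $\partial B_r$, and reducing them to the above radial--tangential canonical form without losing a constant is tedious. Once the reduction is completed, the positivity of $c(n,p)$ follows from the fact that both $n-2-\tfrac{4}{p-1}$ and $n-4-\tfrac{8}{p-1}$ are strictly positive under the supercriticality hypothesis $p>(n+4)/(n-4)$; in the subcritical range some of these signs flip and the identity ceases to be monotone, which is precisely why the exponent restriction enters essentially.
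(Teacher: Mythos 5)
Your proposal is correct and follows essentially the same route as the paper: rescale to the unit ball, differentiate the bulk energy in the scale parameter, integrate by parts using the biharmonic equation to land on a boundary integral involving $\Delta v\,\partial_\nu(\partial_r v)-\partial_\nu(\Delta v)\,\partial_r v$, split into radial and tangential contributions, and match the leftover non-definite terms against the $r$-derivatives of the five correction terms in $E$. The only small imprecision is that the radial contribution does not reduce to a single perfect square of $\tfrac{4}{p-1}r^{-1}u+\partial_r u$: after completing squares the paper finds this term plus two additional nonnegative pieces (one involving second scale-derivatives and one involving tangential gradients of the scale-derivative) which are simply discarded, and the constant $c(n,p)$ is read off as $2(\alpha-\beta-1)$ with $\alpha=n-1-\tfrac{8}{p-1}$, $\beta=\tfrac{4}{p-1}(\tfrac{4}{p-1}-n+2)$, whose positivity indeed follows from $p>\frac{n+4}{n-4}$ as you indicate.
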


\begin{rmk}
Monotonicity formulae have a long history that we will not describe here. Let us simply mention two earlier results that seem closest to our findings: the formula of Pacard \cite{pacard-mm} for the classical Lane-Emden equation and the one of Chang, Wang and Yang \cite{CWY} for biharmonic maps.
\end{rmk}

\medskip

Consider again equation (\ref{equation}) in the case where  $\Omega=\R^n$, i.e.,
\begin{equation}
\label{1.1} \Delta^2 u = |u|^{p-1} u \quad \mbox{in $\R^n$}.
\end{equation}
Let
$$
p_{S}(n)=\left\{
\begin{aligned}
+\infty&\quad\text{if $n\le 4$}\\
\frac{n+4}{n-4}&\quad\text{if $n\ge 5$}
\end{aligned}
\right.
$$
denote the Sobolev exponent. When $1<p\leq p_{S}(n)$, all positive
solutions to (\ref{1.1}) are classified: if $p<p_{S}(n)$,
then $ u \equiv 0$; if $p=p_{S}(n)$, then all solutions can
be written in the form $ u= c_n (\frac{\lambda}{\lambda^2
+|x-x_0|^2})^{\frac{n-4}{2}}$ for some $c_n>0, \lambda>0, x_0 \in
\R^n$, see the work of Xu and one of the authors \cite{WX}. However, there can be many sign-changing solutions to the equation (see the work by Guo, Li and one of the authors \cite{GLW} for the critical case $p=p_{S}(n)$).

Here, we allow $u$ to be sign-changing and $p$ to be supercritical. Instead, we restrict the analysis to solutions having finite Morse index. More precisely, define the quadratic form
\begin{equation}
\label{1.2} \Lambda_u(\phi) := \int_{\R^n} |\Delta \phi|^2 dx -p
\int_{\R^n} |u|^{p-1} \phi^2 dx, \quad \forall\; \phi \in H^2
(\R^n).
\end{equation}
A solution to \eqref{1.1} is said to be stable if
\begin{equation*}
\int_{\R^n} |\Delta \phi|^2 dx -p \int_{\R^n} |u|^{p-1} \phi^2 dx
\geq 0, \quad \forall\; \phi \in H^2 (\R^n).
\end{equation*}
More generally, the Morse index of a solution is defined as the maximal dimension of all
subspaces $E$ of $H^2 (\R^n)$ such that $\Lambda_u(\phi) < 0$ in $E\setminus\{0\}$. Clearly, a solution is stable if and only if its Morse index is equal to zero. It is also standard knowledge that
if a solution to (\ref{1.1}) has finite Morse index, then there is a compact set ${\mathcal K} \subset \R^n$ such that
\begin{equation*}
\int_{\R^n} |\Delta \phi|^2 dx -p \int_{\R^n} |u|^{p-1} \phi^2 dx
\geq 0, \quad \forall\; \phi \in H^2 (\R^n \backslash {\mathcal K}).
\end{equation*}
Recall that if
\begin{equation}\label{singsing}
\gamma=\frac{4}{p-1},\qquad K_0 = \gamma (\gamma+2)( \gamma -n+4)(\gamma -n+2),
\end{equation}
then
\begin{equation}
\label{sing}
 u_s (r) = K_0^{1/(p-1)} r^{-4/(p-1)}
\end{equation}
is a singular solution to (\ref{1.1}) in $ \R^n \setminus \{0\}$. By the Hardy-Rellich inequality with best constant \cite{Re}
\begin{equation*}
\int_{\R^n} |\Delta \phi|^2 dx \geq \frac{n^2 (n-4)^2}{16}
\int_{\R^n} \frac{\phi^2}{|x|^4} dx, \;\;\; \forall \phi \in H^2
(\R^n),
\end{equation*}
the singular solution $u_s$ is stable if and only if
\begin{equation} \label{cond}
p K_0 \le \frac{n^2 (n-4)^2}{16}.
\end{equation}
Solving the corresponding quartic equation, \eqref{cond} holds if and only if $p\ge p_{c}(n)$ where $p_{c}(n)>p_{S}(n)$ is the fourth-order Joseph-Lundgren exponent computed by Gazzola and Grunau \cite{GG}:
$$
p_{c}(n)=\left\{
\begin{aligned}
+\infty&\quad\text{if $n\le 12$}\\
\frac{n+2-\sqrt{n^2+4-n\sqrt{n^2-8n+32} } }{n-6-\sqrt{n^2+4-n\sqrt{n^2-8n+32} }}&\quad\text{if $n\ge 13$}
\end{aligned}
\right.
$$
Equivalently, for fixed $p>p_{S}(n)$, define $n_p$ to be the smallest dimension such that \eqref{cond} holds. Then,
$$
\eqref{cond}\Longleftrightarrow p \geq p_c (n) \Longleftrightarrow n\ge n_{p}.
$$

The existence, uniqueness and stability of regular radial positive solutions to (\ref{1.1}) is by now well understood (see  the works of Gazzola-Grunau, of Guo and one of the authors, and of Karageorgis \cite{GG, GW2, Ka}): for each $a>0$ there exists a unique
entire radial positive solution $ u_a (|x|)$ to \eqref{1.1} with $u_a(0)=a$. This radial
positive solution is stable if and only if \eqref{cond} holds.

In our second result, which is a Liouville-type theorem,  we give a
complete characterization of all finite Morse index solutions (whether radial or not, whether positive or not).

\medskip

\begin{thm}
\label{thmstable}
Let $u$ be  solution to (\ref{1.1}) with finite
Morse index.
\begin{itemize}
\item If $p \in (1, p_c(n))$, $p\neq p_{S}(n)$, then $ u \equiv 0 $;
\item If $p=p_{S}(n)$, then $u$ has finite energy i.e.
\[\int_{\mathbb{R}^n}(\Delta u)^2=\int_{\mathbb R^{n}}|u|^{p+1}<+\infty.\]
If in addition $u$ is stable, then in fact $u\equiv 0$.
\end{itemize}
\end{thm}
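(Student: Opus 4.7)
The plan is to use the monotonicity formula of Theorem \ref{thm monotonicity} to perform a blow-down analysis in the spirit of Fleming, reducing the Liouville property to a classification of stable homogeneous solutions.

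First I would extract a priori integral bounds from the Morse index hypothesis. Since $u$ is stable outside some compact set $\mathcal K$, testing \eqref{1.2} with $\phi = \eta^{m} u$ for $\eta$ a cutoff supported in $B_{2R}\setminus B_R$ (with $R$ large and $m$ sufficiently large), using the equation $\Delta^{2}u = |u|^{p-1}u$, and absorbing cross terms via Young's inequality à la Farina, should yield
\[
\int_{B_{2R}\setminus B_{R}} \bigl(|u|^{p+1} + (\Delta u)^{2}\bigr)\, dx \;\le\; C\, R^{\,n-4(p+1)/(p-1)}.
\]
For $p>p_{S}(n)$ this exponent is negative, so every term appearing in $E(r;0,u)$ is uniformly bounded, and $E(\infty):=\lim_{r\to\infty}E(r;0,u)$ exists and is finite. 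For $p=p_{S}(n)$ the exponent vanishes and the bound directly supplies the asserted finite energy $\int(\Delta u)^{2}=\int|u|^{p+1}<\infty$.

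Next I would run a blow-down argument. Consider $u^{\lambda}(x):=\lambda^{4/(p-1)}u(\lambda x)$, which solves the same equation and satisfies $E(r;0,u^{\lambda})=E(\lambda r;0,u)$. The Step 1 bounds combined with standard elliptic regularity for $\Delta^{2}$ give precompactness of $\{u^{\lambda}\}_{\lambda\to\infty}$ in $W^{4,2}_{\mathrm{loc}}\cap L^{p+1}_{\mathrm{loc}}(\R^{n}\setminus\{0\})$. Any subsequential limit $u^{\infty}$ is a weak solution of \eqref{1.1} with $E(r;0,u^{\infty})\equiv E(\infty)$ constant in $r$; the sharp derivative inequality of Theorem \ref{thm monotonicity} then forces $\partial_{r}u^{\infty}+\tfrac{4}{p-1}r^{-1}u^{\infty}=0$, so $u^{\infty}$ is homogeneous of degree $-4/(p-1)$. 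Since the exceptional set $\mathcal K$ is swallowed by the rescaling, $u^{\infty}$ is stable on all of $\R^{n}$. Writing $u^{\infty}=r^{-4/(p-1)}\psi(\theta)$ and plugging into the stability inequality reduces matters to a spectral problem on $S^{n-1}$, whose sharp constant coincides with the Hardy-Rellich constant defining $p_{c}(n)$ in \eqref{cond}; for $p<p_{c}(n)$ this forces $\psi\equiv 0$, hence $u^{\infty}\equiv 0$ and $E(\infty)=0$.

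To conclude, a parallel analysis at the origin using smoothness of $u$ gives $E(0^{+};0,u)=0$. Monotonicity then forces $E\equiv 0$, and the differential inequality says $u$ itself is homogeneous of degree $-4/(p-1)$; interior regularity at $0$ upgrades this to $u\equiv 0$. The critical case $p=p_{S}(n)$, stable sub-case, would follow by combining finite energy with the Wei-Xu classification of entire critical solutions \cite{WX}, which are standard bubbles and strictly unstable. For the remaining subcritical range $1<p<p_{S}(n)$, the monotonicity formula does not apply, and I would instead combine the Farina-type integral bound of Step 1 with a Pohozaev identity on $B_{R}$ to close the argument.

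The main obstacle I anticipate is Step 3: showing that for every $p_{S}(n)<p<p_{c}(n)$ there is no nontrivial stable homogeneous solution on $\R^{n}$. This requires establishing the precise spherical Hardy-Rellich inequality matching the threshold $p_{c}(n)$ and handling non-radial angular profiles $\psi$; the passage to the limit in the stability inequality along the blow-down sequence, ensuring that stability persists for $u^{\infty}$ on the whole space, also requires careful justification.
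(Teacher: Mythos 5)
Your blow-down strategy matches the paper's supercritical argument in outline, but the proposal has several genuine gaps, mostly concentrated at the point where you must turn the vanishing of the blow-down limit into vanishing of the energy.

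The most serious gap is the claim that $u^\infty\equiv 0$ implies $E(\infty):=\lim_{r\to\infty}E(r;0,u)=0$. The convergence $u^\lambda\to u^\infty$ you have available is only weak in $W^{2,2}_{loc}\cap L^{p+1}_{loc}$, so the terms $\int_{B_r}(\Delta u^\lambda)^2$ and $\int_{B_r}|u^\lambda|^{p+1}$ do not pass to the limit; you need a separate argument. The paper closes this in two different ways. In the stable case (Theorem \ref{thm Liouville for stable}) it feeds the strong $L^2_{loc}$ convergence $u^\lambda\to 0$ back through the stability inequality \eqref{new estimate of p+1} to upgrade it to $\int_{B_3}(\Delta u^\lambda)^2+|u^\lambda|^{p+1}\to 0$, and then a Fubini/good-slice argument produces a radius $r_0$ with $\|u^{\lambda_i}\|_{W^{2,2}(\partial B_{r_0})}\to 0$, whence $E(r_0;0,u^{\lambda_i})\to 0$. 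In the finite Morse index case the paper instead first proves the stable Liouville theorem and then invokes the doubling lemma of Pol\'a\v{c}ik--Quittner--Souplet to get the pointwise decay $|u(x)|\le C|x|^{-4/(p-1)}$; elliptic estimates upgrade this to decay of $\nabla^k u$ for $k\le 4$, after which the energy bound and its convergence to $0$ become elementary (Lemma \ref{lem 5.3} and the computation following it). Your proposal skips both the doubling-lemma step and the stability-upgrade step, so there is no mechanism to conclude $E(\infty)=0$ and then $E\equiv 0$.

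Two smaller but still real issues. First, your treatment of the critical stable case via the Wei--Xu classification fails because Theorem \ref{thmstable} allows sign-changing solutions, and the Wei--Xu result classifies only positive solutions; the paper instead observes that once $\int(\Delta u)^2+|u|^{p+1}<\infty$ and $n=4(p+1)/(p-1)$, the annular form of \eqref{new estimate of p+1} forces $\int_{B_R}(\Delta u)^2+|u|^{p+1}\to 0$ as $R\to\infty$, giving $u\equiv 0$ regardless of sign. Second, the a priori estimate you claim, $\int_{B_{2R}\setminus B_R}(|u|^{p+1}+(\Delta u)^2)\le CR^{n-4(p+1)/(p-1)}$, only controls a dyadic shell, and at $p=p_S(n)$ (exponent zero) summing over shells gives a logarithmically divergent total; you need the cutoff to equal $1$ on the \emph{whole} annulus $B_R\setminus B_{3R_0}$, as in Section 5.2, to get a bound on $\int_{B_R\setminus B_{3R_0}}$ uniform in $R$, and hence finite energy.
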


\begin{rmk}
{\it  According to the preceding discussions, Theorem
\ref{thmstable} is sharp: on the one hand, in the critical case $p=p_{S}(n)$,
Guo, Li and one of the authors \cite{GLW} have constructed a large class of
solutions to \eqref{equation} with finite energy. Since in this case
$\frac{(p-1)n}{4}=p+1$, by a result of Rozenbljum \cite{Ro}, such solutions have
finite Morse index. On the other hand, for $p\ge p_{c}(n)$, all radial solutions are stable (see \cite{GW2, Ka}). }
\end{rmk}
\begin{rmk}
{\it The above theorem generalizes a similar result of Farina
\cite{Fa} for the classical Lane-Emden equation.
 }
\end{rmk}
Now consider \eqref{equation}  when $\Omega$ is a smoothly bounded domain of $\R^{n}$ and supplement it with Navier boundary conditions:
\begin{equation}\label{boundary value problem}
\left\{
\begin{array}{ll}
\Delta^2 u = \lambda(u + 1)^p & \mbox{ in } \Omega\\
u = \Delta u = 0 & \mbox{ on } \partial \Omega,
\end{array}
\right.
\end{equation}
where $\lambda>0$ is a parameter.
It is well known that there exists a
critical value $\lambda^* > 0$ depending on $p$ and $\Omega$
such that
\begin{itemize}
\item If $\lambda \in (0, \lambda^*)$, \eqref{boundary value problem} has a minimal and classical solution $u_\lambda$, which is positive and stable;
\item If $\lambda = \lambda^*$, a unique weak solution, called the extremal solution $u_{\lambda^\ast}$ exists for $(P_{\lambda^*})$. It is given as the pointwise limit $u_{\lambda^\ast} = \lim_{\lambda\uparrow} u_\lambda$
;
\item No weak solution of \eqref{boundary value problem} exists whenever $\lambda > \lambda^*$.
\end{itemize}


An outstanding  remaining problem  is  the regularity of the extremal solution $ u_{\lambda^\ast}$.
An application of Theorem \ref{thmstable} and standard blow-up analysis gives
\begin{thm}
\label{thm full regularity}
If $n<n_p$ (equivalently $p<p_{c}(n)$), the extremal solution $u_{\lambda^\ast}$ is smooth.
\end{thm}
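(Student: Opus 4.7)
The plan is a Gidas--Spruck style blow-up argument combined with the Liouville theorem (Theorem \ref{thmstable}) to obtain a uniform $L^\infty$ bound on the minimal branch, whence elliptic regularity forces $u_{\lambda^*}$ to be classical. Assume for contradiction that $u_{\lambda^*}$ is singular, equivalently that $M_\lambda := 1+\|u_\lambda\|_{L^\infty(\Omega)} \to \infty$ as $\lambda \uparrow \lambda^*$. Pick a maximum point $x_\lambda \in \Omega$ of $u_\lambda$, set $\mu_\lambda := (\lambda M_\lambda^{p-1})^{-1/4}\to 0$, and rescale via
$$v_\lambda(y) := \frac{u_\lambda(x_\lambda+\mu_\lambda y) + 1}{M_\lambda}, \qquad y \in \Omega_\lambda := \mu_\lambda^{-1}(\Omega - x_\lambda).$$
By construction $0 \le v_\lambda \le v_\lambda(0) = 1$, and $v_\lambda$ solves $\Delta^2 v_\lambda = v_\lambda^p$ in $\Omega_\lambda$ with rescaled Navier data $v_\lambda = M_\lambda^{-1}$, $\Delta v_\lambda = 0$ on $\partial \Omega_\lambda$; since stability is scale invariant, $v_\lambda$ inherits $\int (\Delta\phi)^2 \ge p\int v_\lambda^{p-1}\phi^2$ for every $\phi \in C_c^\infty(\Omega_\lambda)$.

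Interior $W^{4,q}$ estimates applied to the uniformly bounded source $v_\lambda^p$ then extract a subsequential limit $v_\lambda \to v_\infty$ in $C^4_{loc}$ of the limiting domain, with $v_\infty \ge 0$, $v_\infty(0)=1$, $\Delta^2 v_\infty = v_\infty^p$, and the passed-to-limit stability inequality. If $d_\lambda := \mathrm{dist}(x_\lambda, \partial\Omega)/\mu_\lambda \to \infty$, the limit is defined on all of $\R^n$ and Theorem \ref{thmstable} (since $1<p<p_c(n)$) forces $v_\infty \equiv 0$, contradicting $v_\infty(0)=1$.

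The main obstacle is the remaining possibility $d_\lambda \to d \in [0,\infty)$, in which case $v_\infty$ lives on a half-space that, after rotation and translation, I may take to be $H=\{y_n > 0\}$, with Navier data on $\partial H$. I would resolve this by odd reflection: because both $v_\infty$ and $\Delta v_\infty$ vanish on $\partial H$, the extension
$$\tilde v_\infty(y', y_n) := \mathrm{sign}(y_n)\, v_\infty(y', |y_n|)$$
is of class $C^3(\R^n)$, lies in $W^{4,2}_{loc}\cap L^{p+1}_{loc}(\R^n)$, and weakly solves $\Delta^2 \tilde v = |\tilde v|^{p-1}\tilde v$. Decomposing any $\phi \in H^2(\R^n)$ into its parts odd and even in $y_n$ diagonalizes both $\int (\Delta\phi)^2$ and $\int |\tilde v_\infty|^{p-1}\phi^2$ (the potential being even in $y_n$), so the stability of $v_\infty$ on $H$ upgrades to stability of $\tilde v_\infty$ on $\R^n$. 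Applying Theorem \ref{thmstable} to $\tilde v_\infty$ again forces $\tilde v_\infty \equiv 0$, contradicting $v_\infty(0)=1$. With the uniform bound $\|u_\lambda\|_{L^\infty}\le C$ secured, $\lambda^*(u_{\lambda^*}+1)^p \in L^\infty(\Omega)$, and standard $L^p$/Schauder theory for the Navier biharmonic problem bootstraps to $u_{\lambda^*} \in C^{4,\alpha}(\overline\Omega)$.
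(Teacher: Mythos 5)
Your blow--up set-up and the interior (full-space) case run exactly as in the paper, so the issue is confined to the boundary (half-space) case, where your route departs from the authors' and, as written, has a real gap.

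Your odd-reflection step is in trouble at the stability upgrade. After rescaling, the stability the minimal branch hands you on the half-space $H$ is only for test functions in the Navier class $H^2(H)\cap H^1_0(H)$ with compact support (indeed you even write the smaller class $C_c^\infty$). When you decompose $\phi\in C_c^\infty(\R^n)$ into $\phi_e+\phi_o$, the odd part restricts to a test function on $H$ vanishing on $\partial H$, which is fine. But the even part $\phi_e$ does \emph{not} vanish on $\partial H$; it instead satisfies a Neumann-type condition $\partial_{y_n}\phi_e=0$ there, and that restriction lies outside the test class for which stability of $v_\infty$ was obtained. The diagonalization $\Lambda_{\tilde v_\infty}(\phi)=\Lambda_{\tilde v_\infty}(\phi_e)+\Lambda_{\tilde v_\infty}(\phi_o)$ is correct, but it only reduces matters to checking each summand is nonnegative; you have no input for the even summand, and in general odd reflection of a solution which is merely Dirichlet-stable need not be stable on the doubled domain, because the Neumann spectrum of the linearization is a separate object. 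So the claim ``stability of $v_\infty$ on $H$ upgrades to stability of $\tilde v_\infty$ on $\R^n$'' does not follow.

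For comparison, the paper avoids reflection altogether. Using the maximum principle on the minimal branch to get $-\Delta \bar u>0$, it invokes the Dancer/Sirakov monotonicity results to show $\partial \bar u/\partial x_1>0$ and $\partial \bar v/\partial x_1>0$ on the half-space, so $w(y)=\lim_{x_1\to\infty}\bar u(x_1,y)$ exists and yields a nontrivial, stable, entire solution of $\Delta^2 w=w^p$ in $\R^{n-1}$. Theorem \ref{thmstable} then forces $p\ge p_c(n-1)\ge p_c(n)$, a contradiction. Note this dimension-reduction step genuinely uses that $p_c$ is nonincreasing in dimension, and that stability passes to the one-dimensional limit (Section 3 of \cite{Y-W}). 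If you want to keep a reflection-flavored argument you would need to establish the Neumann-direction stability separately, which is not free; otherwise the paper's moving-plane route is the cleaner path.
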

More generally,
\begin{thm}
\label{thm full regularity2}
Assume $n<n_p$ (equivalently $p<p_{c}(n)$).
\begin{itemize}
\item  Let $\Omega$ be a smoothly bounded domain and $u\in H^1_0(\Omega)\cap H^{2}(\Omega)$ be a solution of \eqref{boundary value problem} of finite Morse index $k\in\mathbb N$. Then, $u$ is smooth and there exists a constant $C>0$ depending only on $k,N,\Omega,p$  such that
$$
\| u \|_{L^\infty(\Omega)} \le C.
$$
In particular, any classical solution satisfies the above inequality.
\item Let $\Omega$ be any open set and $u\in H^1_0(\Omega)\cap H^{2}(\Omega)$ be a solution of \eqref{equation}. Then, there exists a constant $C>0$ depending only on $k,N,\Omega,p$  such that for every $i\le3$,
$$
\vert \nabla^{i}u \vert\le C\text{dist }(x,\partial\Omega)^{-\frac{4}{p-1}-i} \quad\text{a.e.in $\Omega$}
$$

\end{itemize}
\end{thm}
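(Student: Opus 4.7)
\noindent\emph{Proof plan.} The scheme is the classical contradiction-plus-blow-up reduction to a Liouville-type theorem --- here Theorem~\ref{thmstable} --- together with standard elliptic regularity for the derivative bounds. For the first bullet, suppose no such $C$ exists: there is a sequence $u_{m}$ of solutions of \eqref{boundary value problem} with Morse index $\leq k$ and $L_{m}:=\|u_{m}\|_{L^{\infty}(\Omega)}\to\infty$, attained at some $x_{m}\in\overline{\Omega}$. Set
\begin{equation*}
v_{m}(y):=L_{m}^{-1}u_{m}\bigl(x_{m}+L_{m}^{-(p-1)/4}y\bigr),\qquad y\in\Omega_{m}:=L_{m}^{(p-1)/4}(\Omega-x_{m}).
\end{equation*}
Then $v_{m}$ solves $\Delta^{2}v_{m}=|v_{m}|^{p-1}v_{m}$ on $\Omega_{m}$ with $|v_{m}|\leq 1$ and $|v_{m}(0)|=1$, and since the Navier boundary conditions and the rescaling are compatible with the quadratic form $\Lambda$, the Morse index of $v_{m}$ stays $\leq k$. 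Elliptic bootstrap yields $C^{4,\alpha}_{\text{loc}}$ compactness. Setting $d_{m}:=L_{m}^{(p-1)/4}\operatorname{dist}(x_{m},\partial\Omega)$, along a subsequence either $d_{m}\to\infty$, in which case $\Omega_{m}\to\R^{n}$ and the limit $v_{\infty}$ is a nontrivial finite Morse index solution of \eqref{1.1} on $\R^{n}$, contradicting Theorem~\ref{thmstable}; or $d_{m}$ stays bounded, $\Omega_{m}$ converges to a half-space $\mathbb{H}$, the Navier conditions $v_{\infty}=\Delta v_{\infty}=0$ pass to $\partial\mathbb{H}$ in the limit, and odd reflection across $\partial\mathbb{H}$ produces a global $W^{4,2}_{\text{loc}}$ solution on $\R^{n}$ of finite Morse index with $|v_{\infty}(0)|=1$, again contradicting Theorem~\ref{thmstable}.

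\noindent For the second bullet I would apply the Polacik--Quittner--Souplet doubling lemma to $N(x):=|u(x)|^{(p-1)/4}$. If the $i=0$ inequality fails, doubling produces points $y_{m}\in\Omega$ with $M_{m}:=N(y_{m})\to\infty$, $M_{m}\operatorname{dist}(y_{m},\partial\Omega)\to\infty$, and $N\leq 2M_{m}$ on $B_{M_{m}^{-1}}(y_{m})$. Rescaling at $y_{m}$ by $M_{m}^{-1}$ reproduces Case~A of the previous paragraph and yields the same contradiction with Theorem~\ref{thmstable}; the resulting constant depends only on $n,p,k$. Once the weighted $L^{\infty}$ bound on $u$ is established, the derivative bounds for $i=1,2,3$ follow by interior $W^{4,q}$ estimates applied on balls of radius $\operatorname{dist}(x,\partial\Omega)/2$, combined with the rescaling $v(y):=\operatorname{dist}(x,\partial\Omega)^{4/(p-1)}u\bigl(x+\operatorname{dist}(x,\partial\Omega)\,y\bigr)$.

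\noindent\textbf{Main obstacle.} The most delicate point is the critical exponent $p=p_{S}(n)$, where Theorem~\ref{thmstable} only provides finite energy rather than triviality, so the blow-up might deliver a nontrivial bubble. The standard remedy is that any such bubble carries a minimum amount of $L^{p+1}$-mass, and a concentration-compactness plus cutoff argument extracts from its negative cone a family of mutually disjointly supported negative directions for $\Lambda_{u_{m}}$, eventually exceeding the fixed Morse-index bound $k$, a contradiction. One must also verify that the Morse index survives the odd reflection in Case~B; this follows from decomposing test functions into their odd and even parts with respect to $\partial\mathbb{H}$ and exploiting the skew-symmetry of the reflected solution.
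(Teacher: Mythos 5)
Your treatment of the second bullet (doubling lemma of Pol\'a\v cik--Quittner--Souplet applied to $N=|u|^{(p-1)/4}$, then interior $W^{4,q}$ estimates on balls of radius comparable to $\operatorname{dist}(x,\partial\Omega)$) is exactly what the paper does --- it is the combination of Lemma~\ref{finite Morse decay bound} and Corollary~\ref{corococo}. One clarification that would actually simplify your write-up: the doubling lemma produces points $y_m$ with $M(y_m)\operatorname{dist}(y_m,\partial\Omega)\to\infty$, so the rescaled solutions are \emph{stable} on balls of growing radius, and the blow-down limit is a stable entire solution, not merely one of finite Morse index. You should therefore invoke Theorem~\ref{thm Liouville for stable} rather than Theorem~\ref{thmstable}; in particular the ``main obstacle'' you raise at $p=p_S(n)$ does not arise here, since one never meets a finite-energy but nontrivial limit.

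For the first bullet, however, your Case~B argument (odd reflection across $\partial\mathbb{H}$) has a genuine gap and is not the paper's route. After odd reflection, the quadratic form $\Lambda_w$ on $\R^n$ does split, with vanishing cross terms, as $\Lambda_w(\psi)=\Lambda_w(\psi_o)+\Lambda_w(\psi_e)$. The odd sector corresponds (after restriction to $\mathbb H$) to Navier-type test functions $\phi=\Delta\phi=0$ on $\partial\mathbb H$ and its index is indeed controlled by the index of $v_\infty$; but the even sector corresponds to Neumann-type conditions $\partial_\nu\phi=\partial_\nu\Delta\phi=0$ on $\partial\mathbb H$, and nothing in the blow-up controls the number of negative directions of $\int_{\mathbb H}|\Delta\phi|^2-p v_\infty^{p-1}\phi^2$ on that subspace. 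So ``decomposing test functions into their odd and even parts'' does not establish that $w$ has finite Morse index on $\R^n$, and Theorem~\ref{thmstable} cannot be invoked for $w$. The paper's argument avoids this entirely: since every solution of \eqref{boundary value problem} is positive (maximum principle for the Navier bilaplacian), so is the half-space limit $\bar u$, and the moving-plane theorems of Dancer and Sirakov give $\partial_{x_1}\bar u>0$, $\partial_{x_1}(-\Delta\bar u)>0$. Then $w(y)=\lim_{x_1\to\infty}\bar u(x_1,y)$ is a nontrivial solution on $\R^{n-1}$, stable by the Section~3 argument of \cite{Y-W}; since $p<p_c(n)\le p_c(n-1)$ this contradicts Theorem~\ref{thmstable}. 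This dimension reduction both bypasses the reflection/Morse-index problem and handles $p=p_S(n)$ automatically (it is subcritical in $\R^{n-1}$).

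Two smaller points. Your rescaling should carry the additive shift $v_m := L_m^{-1}\bigl(u_m(x_m+L_m^{-(p-1)/4}\cdot)+1\bigr)$, so that $\Delta^2 v_m=\lambda_m v_m^p$ exactly and the Navier data rescale cleanly; omitting the $+1$ yields $\Delta^2 v_m=\lambda_m(v_m+L_m^{-1})^p$, which is fine only in the limit. And once the second bullet is proved, the decay bound forces $L_m^{(p-1)/4}\operatorname{dist}(x_m,\partial\Omega)\le C$, so your Case~A ($\Omega_m\to\R^n$) in fact never occurs for \eqref{boundary value problem}; only the half-space case needs handling.
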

The first part of the above theorem is again sharp since the singular solution defined by \eqref{sing}, \eqref{singsing} is stable whenever $n\ge n_{p}$.  For such dimensions, one can still try to estimate the size of the singular set of solutions.
\begin{defi}
A point $x$ belongs to the regular set of a function $u\in L^{1}_{loc}(\Omega)$ if there exists a neighborhood $B$ of $x$ such that $u\in L^{\infty}(B)$. Otherwise, $x$ belongs to ${\mathcal S}$, the singular set of $u$.
\end{defi}
By definition, the regular set is an open set. By elliptic estimates
applied to \eqref{equation}, $u$ is smooth in its regular set. Now,
we state the interior partial regularity for $u_{\lambda^\ast}$.

\begin{thm}\label{thm partial regularity}
Let $ n \geq n_p$ and let $u_{\lambda^\ast}$ be the extremal solution to \eqref{boundary
value problem}. Then the Hausdorff dimension of its singular set ${\mathcal S}$ is no more than
$n-n_p$. Moreover, when $n=n_p$, ${\mathcal S}$ is a discrete set.
\end{thm}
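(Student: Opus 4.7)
The plan is to combine the monotonicity formula of Theorem~\ref{thm monotonicity} with a blow-up analysis and Federer's dimension reduction principle, using the Liouville classification of Theorem~\ref{thmstable} as the rigidity input. The monotonicity formula plays the role of monotonicity of mass ratios for minimal surfaces, Theorem~\ref{thmstable} rules out nontrivial stable homogeneous solutions below dimension $n_{p}$, and Federer's scheme converts this dimensional rigidity into a bound on the Hausdorff dimension of $\mathcal S$.

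First I would use the stability of $u_{\lambda^\ast}$ (available because $u_{\lambda^\ast}$ is the monotone limit of the stable minimal branch $u_{\lambda}$) together with standard test-function constructions for fourth-order equations to derive uniform scale-invariant energy bounds of the form
\[
r^{4\frac{p+1}{p-1}-n}\int_{B_{r}(x)}\bigl(|\Delta u_{\lambda^\ast}|^{2}+|u_{\lambda^\ast}|^{p+1}\bigr)\,dy\le C
\]
for every $x\in\Omega$ and small $r>0$, together with companion estimates for the boundary traces of $u$, $\partial_{r}u$, $\Delta u$ appearing in $E(r;x,u_{\lambda^\ast})$. The monotonicity formula then guarantees that the density
\[
\Theta(x):=\lim_{r\downarrow 0}E(r;x,u_{\lambda^\ast})
\]
exists at every $x$ and defines an upper semicontinuous function of $x$. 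Next I would establish an $\varepsilon$-regularity statement: there exists $\varepsilon_{0}>0$ such that if $E(r;x,u_{\lambda^\ast})<\varepsilon_{0}$ for some small $r$, then $u_{\lambda^\ast}\in L^{\infty}$ in a neighborhood of $x$. Since $\Theta\equiv 0$ at any regular point, this identifies $\mathcal S=\{\Theta\ge\varepsilon_{0}\}$, and in particular $\mathcal S$ is closed.

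Given $x_{0}\in\mathcal S$, I then form the blow-up sequence $v_{\mu}(y)=\mu^{4/(p-1)}u_{\lambda^\ast}(x_{0}+\mu y)$. Stability and all the uniform estimates are scale invariant, so a subsequence converges weakly in $W^{4,2}_{\mathrm{loc}}\cap L^{p+1}_{\mathrm{loc}}$, and locally smoothly outside a closed set, to a stable weak solution $v_{\infty}$ on $\mathbb R^{n}$. Because $E(r;0,v_{\infty})\equiv\Theta(x_{0})$ is independent of $r$, the pointwise gradient inequality in Theorem~\ref{thm monotonicity} forces
\[
\tfrac{4}{p-1}\,r^{-1}v_{\infty}+\partial_{r}v_{\infty}\equiv 0,
\]
so $v_{\infty}$ is homogeneous of degree $-4/(p-1)$. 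Federer's dimension reduction then applies: any singular point $y_{0}\ne 0$ of $v_{\infty}$ produces, via a further blow-up, a nontrivial stable homogeneous solution which, by homogeneity, is invariant along the direction $y_{0}/|y_{0}|$ and therefore a stable solution on $\mathbb R^{n-1}$. Iterating strips off at most $n-n_{p}$ dimensions before Theorem~\ref{thmstable} forces the limit to vanish identically. Combined with the closedness of $\mathcal S$ and the standard Federer covering argument, this yields $\dim_{\mathcal H}\mathcal S\le n-n_{p}$, and when $n=n_{p}$ no directions can be removed, so $\mathcal S$ must be discrete.

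The main obstacle I expect is the first step: establishing the uniform scale-invariant energy and boundary-trace bounds that make $E(r;x,u_{\lambda^\ast})$ well-defined and finite, together with the $\varepsilon$-regularity result. In the absence of a pointwise maximum principle for $\Delta^{2}$, one must extract these bounds purely from the stability inequality and fourth-order interior estimates, and one must rule out both concentration of $\int|u|^{p+1}$ and loss of strong convergence of $\Delta v_{\mu}$ during the blow-up. Once these compactness and regularity issues are settled, the dimension reduction is a formal consequence of Theorems~\ref{thm monotonicity} and~\ref{thmstable}.
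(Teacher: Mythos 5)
Your overall strategy matches the paper's exactly: monotonicity formula to define the density $\Theta(x)$, $\varepsilon$-regularity to characterize $\mathcal S$ as $\{\Theta\geq\varepsilon_0\}$, blow-up producing stable homogeneous solutions, Federer's reduction using Theorem~\ref{thm homogeneous solution} as the rigidity input. You also correctly identify the hard step. But you leave that step open, and it is not a routine hurdle; the way the paper closes it is the real content of the section and uses an input you do not mention.

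The paper's $\varepsilon$-regularity (Lemma~\ref{lem ep-regularity}) does not follow from stability plus a scale-invariant $(\Delta u)^2+|u|^{p+1}$ bound alone. It needs the stronger, scale-invariant \emph{$L^{2p}$ estimate}
\[
r^{\frac{8p}{p-1}-n}\int_{B_{r}(x)}(u_{\lambda^\ast}+1)^{2p}
\;\leq\;
C\,r^{4\frac{p+1}{p-1}-n}\int_{B_{2r}(x)}\bigl((u_{\lambda^\ast}+1)^{p+1}+(\Delta u_{\lambda^\ast})^{2}\bigr),
\]
and this estimate is not obtained from the stability inequality but from a Souplet-type pointwise inequality $\Delta w_\lambda\leq -\sqrt{2\lambda/(p+1)}\,w_\lambda^{(p+1)/2}<0$ for the minimal solutions $w_\lambda=u_\lambda+1$, which relies on the Navier boundary conditions and positivity of the minimal branch (Lemma~3.2 of \cite{CEG}). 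That $L^{2p}$ bound is what powers the biharmonic decomposition $u=u_1+u_2$ in Lemma~\ref{lem improvement of decay}: the $L^{2p}$ control on the right-hand side $u^p$ combines with $W^{4,2}$ estimates, Sobolev/$L^1$ interpolation, the mean-value property for the biharmonic part $u_2$, and Campanato iteration to yield the Morrey-space improvement that bootstraps to $L^\infty$. Without it, you cannot start the decay scheme; the statement you flag as ``the main obstacle'' is exactly the gap, and your proposal offers no substitute mechanism for it (in particular, you cannot get the needed concentration-compactness for the blow-up from stability plus $L^{p+1}$ bounds alone).

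A second, smaller point: the dimension-reduction step needs the precise monotonicity $\Theta(x,v_\infty)\leq\Theta(0,v_\infty)$ for homogeneous stable $v_\infty$, together with the statement that equality forces translation invariance along $x$; this is the last lemma of Section~6 and is what lets one pass from a non-isolated singular point of $v_\infty$ to a translation-invariant (hence lower-dimensional) stable homogeneous profile. Your ``stripping off dimensions'' sketch is in the right spirit, but this lemma is the load-bearing technical statement, and its proof again goes through the monotonicity formula applied at a shifted base point, which requires the boundedness of $E(r;x_0,u)$ uniformly in $x_0$ — another consequence of the $L^{2p}$ estimate.

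So: same route as the paper, but with the decisive technical input (the Souplet-type inequality for minimal solutions and the resulting $L^{2p}$ Morrey bound driving $\varepsilon$-regularity) missing, and only acknowledged rather than supplied.
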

We now list some known results. We start with the analogous second order equation
\begin{equation}
\label{second} \Delta u+ |u|^{p-1} u=0, \ \mbox{in} \ \R^n.
\end{equation}
As mentioned earlier, Farina completely classified finite Morse index solutions (positive or sign-changing) in his seminal paper \cite{Fa}.
His proof makes a delicate use of the classical Moser iteration method. More precisely, if one multiplies the equation (\ref{second}) by a power of $u$, say $ u^q$, $q>1$, Moser's iteration works because of the following simple identity
\begin{equation*}
\int_{\mathbb{R}^n} u^q (-\Delta u)= \frac{4q}{(q+1)^2} \int_{\mathbb{R}^n} |\nabla u^{\frac{q+1}{2}}|^2
, \quad \forall u \in C_0^1 (\mathbb{R}^n).
\end{equation*}

There have been many attempts
to generalize Moser's iteration technique (or Farina's approach) to fourth order problems like (\ref{equation}). Unfortunately, this runs into problems: the corresponding identity reads
\begin{equation*}
\int_{\mathbb{R}^n} u^q (\Delta^2u)= \frac{4q}{(q+1)^2}  \int_{\mathbb{R}^n} |\Delta u^{\frac{q+1}{2}}|^2 -q (q-1)^2 \int_{\mathbb{R}^n} u^{q-3} |\nabla u|^4, \forall u \in C_0^2 (\mathbb{R}^n)
\end{equation*}
and the additional term $\int_{\mathbb{R}^n} u^{q-3} |\nabla u|^4$
makes the Moser iteration argument difficult to use.

Another strategy is to use the test function $v=-\Delta u$. This allows to treat exponents less than $ \frac{n}{n-8}+\epsilon_n$ for some $ \epsilon_n>0$, see the works of Cowan-Ghoussoub-Esposito \cite{CEG} and Ye and one of the authors \cite{Y-W}. Another approach, obtained by Cowan and Ghoussoub\footnote{a similar method was first announced in \cite{dupaigne}, and later published in the work by Farina-Sirakov and one of the authors \cite{DFS}.}\cite{CG}, and further exploited by Hajlaoui, Harrabi and Ye \cite{HHY}, is to derive the following interesting interpolated version of the inequality: for stable solutions to (\ref{equation}), there holds
\begin{equation*}
\sqrt{p} \int_{\R^n} |u|^{\frac{p-1}{2}} \phi^2 \leq \int_{\R^n}
|\nabla \phi |^2, \quad \forall \phi \in C_0^1 (\R^n).
\end{equation*}
This approach improves the first upper bound $\frac{n}{n-8}+\epsilon_{n}$, but it again fails to catch the optimal exponent $p_c(n)$ (when $n\geq 13$). It should be remarked that by combining these two approaches one can show that stable positive solutions to (\ref{equation}) do not exist when $n \leq 12$ and $ p>\frac{n+4}{n-4}$, see \cite{HHY}.

In the above references, only positive solutions to (\ref{equation}) are considered. One reason is their use of the following inequality, due to Souplet (\cite{Soup})
\begin{equation}
\label{souplet}
\Delta u + \frac{2}{p+1} u^{\frac{p+1}{2}} \leq 0 \ \ \mbox{in} \ \R^n.
\end{equation}
As observed in \cite{DGGW} for a similar equation, the use of the
above inequality can be completely avoided.

In this paper we take a completely new approach, which also avoids
the use of \eqref{souplet} and requires minimal integrability. One
of our motivations is Fleming's proof of the Bernstein theorem for
minimal surfaces in dimension $3$. Fleming used a monotonicity
formula for minimal surfaces together with a compactness result to
blow down the minimal surface. It turns out that the blow-down limit
is a minimal cone. This is because the monotonic quantity is
constant only for minimizing cones. Then, he proved that minimizing
cones are flat, which implies in turn the flatness of the original
minimal surface.

At last, let us sketch the proof of Theorem \ref{thmstable}: we
first derive a monotonicity formula for our equation
\eqref{equation}. Then, we classify stable solutions: this is
Theorem \ref{thm Liouville for stable} in Section \ref{sect
liouville stable}. To do this, we estimate solutions in the
$L^{p+1}$ norm, utilizing the afore-mentioned methods available in
the litterature, and then show that the {\em blow-down} limit
$u^\infty(x) = \lim_{\lambda\to\infty} \lambda^{\frac{4}{p-1}}
u(\lambda x)$ satisfies $E(r) \equiv const$. Then, Theorem \ref{thm
monotonicity} implies that $u^\infty$ is a homogeneous stable
solution, and we show in Theorem~\ref{thm homogeneous solution} that
such solutions are trivial if $p<p_c(n)$. Then similar to Fleming's
proof, the triviality of  the {\em blow-down} limit implies that the
original entire solution is also trivial. Finally, we extend our
result to solutions of finite Morse index.

\bigskip

\noindent
{\bf Acknowledgment.}  L. Dupaigne thanks J. Wei and the math department of the Chinese University of Hong Kong (where part of this work was done) for their warm hospitality. Kelei Wang is partially supported by the Joint Laboratory of CAS-Croucher in Nonlinear PDE. Juncheng Wei was supported by a GRF grant
from RGC of Hong Kong.
J. D\'avila acknowledges support of Fondecyt  1090167, CAPDE-Anillo ACT-125 and
Fondo Basal CMM.

\section{Proof of the Monotonicity formula}
\setcounter{equation}{0}

In this section we derive a monotonicity formula for functions $u\in W^{4,2}(B_R(0))\cap L^{p+1}(B_R(0))$
 solving \eqref{equation} in $B_R(0) \subset \Omega$. We assume that $p>\frac{n+4}{n-4}$.

\begin{proof}[Proof of Theorem \ref{thm monotonicity}]
Since the boundary integrals in $E(r;x,u)$ only involve second order
derivatives of $u$, the boundary integrals in $\frac{dE}{d
r}(r;x,u)$ only involve third order derivatives of $u$. By our
assumption $u\in W^{4,2}(B_R(0))\cap L^{p+1}(B_R(0))$, for each $B_r
(x)\subset B_R(0)$, $u\in W^{3,2}(\partial B_r (x))$. Thus, the
following calculations can be rigorously verified. Assume that $x=0$
and that the balls $B_\lambda$ are all centered at $0$. Take
\[\widetilde{E}(\lambda):=\lambda^{4\frac{p+1}{p-1}-n}
\int_{B_\lambda}\frac{1}{2}(\Delta u)^2-\frac{1}{p+1}|u|^{p+1}.\]
Define
$$
v:= \Delta u
$$
and
\[u^\lambda(x):=\lambda^{\frac{4}{p-1}}u(\lambda x),
\ \ \ \ v^\lambda(x):=\lambda^{\frac{4}{p-1}+2}v(\lambda x).\] We
still have $v^\lambda=\Delta u^\lambda$, $\Delta
v^\lambda=(u^\lambda)^p$, and by differentiating in $\lambda$,
\[\Delta\frac{du^\lambda}{d\lambda}=\frac{dv^\lambda}{d\lambda}.\]
Note that differentiation in $\lambda$ commutes with differentiation
and integration in $x$.
A rescaling shows
\[\widetilde{E}(\lambda)=\int_{B_1}\frac{1}{2}(v^\lambda)^2-\frac{1}{p+1}|u^\lambda|^{p+1}.\]
Hence
\begin{eqnarray}\label{2.1}
\frac{d}{d\lambda}\widetilde{E}(\lambda)&=&\int_{B_1}v^\lambda\frac{dv^\lambda}{d\lambda}
-(u^\lambda)^{p}\frac{du^\lambda}{d\lambda}\\\nonumber
&=&\int_{B_1}v^\lambda\Delta \frac{du^\lambda}{d\lambda} -\Delta
v^\lambda\frac{du^\lambda}{d\lambda}\\\nonumber
 &=&\int_{\partial
B_1}v^\lambda\frac{\partial}{\partial r} \frac{du^\lambda}{d\lambda}
-\frac{\partial v^\lambda}{\partial r}\frac{du^\lambda}{d\lambda}.
\end{eqnarray}
In what follows, we express all derivatives of $u^{\lambda}$ in the
$r=|x|$ variable in terms of derivatives in the $\lambda$ variable.
In the definition of $u^\lambda$ and $v^\lambda$, directly
differentiating in $\lambda$ gives
\begin{equation}\label{2.3}
\frac{du^\lambda}{d\lambda}(x)=\frac{1}{\lambda}
\left(\frac{4}{p-1}u^\lambda(x)+r\frac{\partial u^\lambda}{\partial
r}(x)\right),
\end{equation}
\begin{equation}\label{2.4}
\frac{dv^\lambda}{d\lambda}(x)=\frac{1}{\lambda}
\left(\frac{2(p+1)}{p-1}v^\lambda(x)+r\frac{\partial
v^\lambda}{\partial r}(x)\right).
\end{equation}
In \eqref{2.3}, taking derivatives in $\lambda$ once again, we get
\begin{equation}\label{2.5}
\lambda\frac{d^2u^\lambda}{d\lambda^2}(x)+\frac{du^\lambda}{d\lambda}(x)
=\frac{4}{p-1}\frac{du^\lambda}{d\lambda}(x) +r\frac{\partial
}{\partial r}\frac{du^\lambda}{d\lambda}(x).
\end{equation}
 Substituting \eqref{2.4} and \eqref{2.5} into \eqref{2.1} we obtain
\begin{eqnarray}
\frac{d \widetilde{E}}{d\lambda}&=&\int_{\partial
B_1}v^\lambda\left(\lambda\frac{d^2u^\lambda}{d\lambda^2}+\frac{p-5}{p-1}\frac{du^\lambda}{d\lambda}\right)
-\frac{du^\lambda}{d\lambda}\left(\lambda\frac{dv^\lambda}{d\lambda}-\frac{2(p+1)}{p-1}v^\lambda\right)\\
\nonumber &=&\int_{\partial B_1}\lambda
v^\lambda\frac{d^2u^\lambda}{d\lambda^2}+3v^\lambda\frac{du^\lambda}{d\lambda}
-\lambda\frac{du^\lambda}{d\lambda}\frac{dv^\lambda}{d\lambda}.\label{2.2}
\end{eqnarray}
Observe that $v^{\lambda}$ is expressed as a combination of $x$ derivatives of $u^{\lambda}$. So we also transform $v^\lambda$ into $\lambda$ derivatives of $u^\lambda$. By taking derivatives in $r$ in \eqref{2.3} and noting
\eqref{2.5}, we get on $\partial B_1$,
\begin{eqnarray*}
\frac{\partial^2u^\lambda}{\partial
r^2}&=&\lambda\frac{\partial}{\partial
r}\frac{du^\lambda}{d\lambda}-\frac{p+3}{p-1}\frac{\partial
u^\lambda}{\partial r}\\
&=&\lambda^2\frac{d^2u^\lambda}{d\lambda^2}+\frac{p-5}{p-1}\lambda\frac{du^\lambda}{d\lambda}
-\frac{p+3}{p-1}\left(\lambda\frac{du^\lambda}{d\lambda}-\frac{4}{p-1}u^\lambda\right)\\
&=&\lambda^2\frac{d^2u^\lambda}{d\lambda^2}-\frac{8}{p-1}\lambda\frac{du^\lambda}{d\lambda}
+\frac{4(p+3)}{(p-1)^2}u^\lambda.
\end{eqnarray*}
Then on $\partial B_1$,
\begin{eqnarray*}
v^\lambda&=&\frac{\partial^2u^\lambda}{\partial r^2}
+\frac{n-1}{r}\frac{\partial u^\lambda}{\partial
r}+\frac{1}{r^2}\Delta_\theta u^\lambda\\
&=&\lambda^2\frac{d^2u^\lambda}{d\lambda^2}-\frac{8}{p-1}\lambda\frac{du^\lambda}{d\lambda}
+\frac{4(p+3)}{(p-1)^2}u^\lambda+(n-1)\left(\lambda\frac{du^\lambda}{d\lambda}-\frac{4}{p-1}u^\lambda\right)
+\Delta_\theta u^\lambda\\
&=&\lambda^2\frac{d^2u^\lambda}{d\lambda^2}+\left(n-1-\frac{8}{p-1}\right)\lambda\frac{du^\lambda}{d\lambda}
+\frac{4}{p-1}(\frac{4}{p-1}-n+2)u^\lambda+\Delta_\theta u^\lambda.
\end{eqnarray*}
Here $\Delta_\theta$ is the Beltrami-Laplace operator on $\partial
B_1$ and below $\nabla_\theta$ represents the tangential
derivative on $\partial B_1$. For notational convenience, we also define the
constants
\[\alpha=n-1-\frac{8}{p-1},\quad\quad \beta=\frac{4}{p-1}(\frac{4}{p-1}-n+2).\]
Now \eqref{2.2} reads
\begin{eqnarray*}
\frac{d}{d\lambda}\widetilde{E}(\lambda)&=&\int_{\partial
B_1}\lambda
\left(\lambda^2\frac{d^2u^\lambda}{d\lambda^2}+\alpha\lambda\frac{du^\lambda}{d\lambda}
+\beta
u^\lambda\right)\frac{d^2u^\lambda}{d\lambda^2}\\
&&+3\left(\lambda^2\frac{d^2u^\lambda}{d\lambda^2}+\alpha\lambda\frac{du^\lambda}{d\lambda}
+\beta u^\lambda\right)\frac{du^\lambda}{d\lambda}\\
&&-\lambda\frac{du^\lambda}{d\lambda}\frac{d}{d\lambda}\left(\lambda^2\frac{d^2u^\lambda}{d\lambda^2}+\alpha\lambda\frac{du^\lambda}{d\lambda}
+\beta u^\lambda\right)\\
&&+\int_{\partial B_1}\lambda\Delta_\theta
u^\lambda\frac{d^2u^\lambda}{d\lambda^2}+3\Delta_\theta
u^\lambda\frac{du^\lambda}{d\lambda}
-\lambda\frac{du^\lambda}{d\lambda}\Delta_\theta\frac{du^\lambda}{d\lambda}\\
&=&R_1+R_2.
\end{eqnarray*}
Integrating by parts on $\partial B_1$, we get
\begin{eqnarray*}
R_2&=&\int_{\partial B_1}-\lambda\nabla_\theta
u^\lambda\nabla_\theta\frac{d^2u^\lambda}{d\lambda^2}-3\nabla_\theta
u^\lambda\nabla_\theta\frac{du^\lambda}{d\lambda}
+\lambda\big|\nabla_\theta\frac{du^\lambda}{d\lambda}\big|^2\\
&=&-\frac{\lambda}{2}\frac{d^2}{d\lambda^2}\left(\int_{\partial
B_1}|\nabla_\theta
u^\lambda|^2\right)-\frac{3}{2}\frac{d}{d\lambda}\left(\int_{\partial
B_1}|\nabla_\theta u^\lambda|^2\right)+2\lambda\int_{\partial
B_1}|\nabla_\theta\frac{du^\lambda}{d\lambda}|^2\\
&=&-\frac{1}{2}\frac{d^2}{d\lambda^2}\left(\lambda\int_{\partial
B_1}|\nabla_\theta
u^\lambda|^2\right)-\frac{1}{2}\frac{d}{d\lambda}\left(\int_{\partial
B_1}|\nabla_\theta u^\lambda|^2\right)+2\lambda\int_{\partial
B_1}|\nabla_\theta\frac{du^\lambda}{d\lambda}|^2\\
&\geq&-\frac{1}{2}\frac{d^2}{d\lambda^2}\left(\lambda\int_{\partial
B_1}|\nabla_\theta
u^\lambda|^2\right)-\frac{1}{2}\frac{d}{d\lambda}\left(\int_{\partial
B_1}|\nabla_\theta u^\lambda|^2\right).
\end{eqnarray*}
For $R_1$, after some simplifications we obtain
\begin{eqnarray*}
R_1&=&\int_{\partial B_1}\lambda
\left(\lambda^2\frac{d^2u^\lambda}{d\lambda^2}+\alpha\lambda\frac{du^\lambda}{d\lambda}
+\beta u^\lambda\right)\frac{d^2u^\lambda}{d\lambda^2}\\
&&\ \ \ \ \ \
+3\left(\lambda^2\frac{d^2u^\lambda}{d\lambda^2}+\alpha\lambda\frac{du^\lambda}{d\lambda}
+\beta u^\lambda\right)\frac{du^\lambda}{d\lambda}\\
&&\ \ \ \ \ \ -\lambda\frac{du^\lambda}{d\lambda}\left(
\lambda^2\frac{d^3u^\lambda}{d\lambda^3}+(2+\alpha)\lambda\frac{d^2u^\lambda}{d\lambda^2}
+(\alpha+\beta)\frac{du^\lambda}{d\lambda}\right)\\
&=&\int_{\partial
B_1}\lambda^3\left(\frac{d^2u^\lambda}{d\lambda^2}\right)^2
+\lambda^2\frac{d^2u^\lambda}{d\lambda^2}\frac{du^\lambda}{d\lambda}
+\beta \lambda u^\lambda\frac{d^2u^\lambda}{d\lambda^2}+3\beta
u^\lambda\frac{du^\lambda}{d\lambda}\\
&&\ \ \ \ \
+(2\alpha-\beta)\lambda\left(\frac{du^\lambda}{d\lambda}\right)^2
-\lambda^3\frac{du^\lambda}{d\lambda}\frac{d^3u^\lambda}{d\lambda^3}\\
&=&\int_{\partial
B_1}2\lambda^3\left(\frac{d^2u^\lambda}{d\lambda^2}\right)^2
+4\lambda^2\frac{d^2u^\lambda}{d\lambda^2}\frac{du^\lambda}{d\lambda}
+(2\alpha-2\beta)\lambda\left(\frac{du^\lambda}{d\lambda}\right)^2 \\
&&\ \ \ \ \
+\frac{\beta}{2}\frac{d^2}{d\lambda^2}\left[\lambda\left(
u^\lambda\right)^2\right]-
\frac{1}{2}\frac{d}{d\lambda}\left[\lambda^3\frac{d}{d\lambda}\left(\frac{du^\lambda}{d\lambda}\right)^2\right]
+\frac{\beta}{2} \frac{d}{d\lambda}\left(u^\lambda\right)^2.
\end{eqnarray*}
Here we have used the relations
\[\lambda ff^{\prime\prime}=\left(\frac{\lambda}{2}f^2\right)^{\prime\prime}-2ff^{\prime}-\lambda(f^{\prime})^2,\]
and
\[-\lambda^3f^{\prime}f^{\prime\prime\prime}=-\left[\frac{\lambda^3}{2}\left((f^{\prime})^2\right)^{\prime}\right]^{\prime}+3\lambda^2
f^{\prime}f^{\prime\prime}+\lambda^3(f^{\prime\prime})^2.\] Since
$p>\frac{n+4}{n-4}$, direct calculations show that
\begin{equation}
\label{alphabeta}
\alpha-\beta=\left(n-1-\frac{8}{p-1}\right)-\frac{4}{p-1}\left(\frac{4}{p-1}-n+2\right)>1.
\end{equation}
Thus,
\begin{eqnarray}\label{2.6}
&&2\lambda^3\left(\frac{d^2u^\lambda}{d\lambda^2}\right)^2
+4\lambda^2\frac{d^2u^\lambda}{d\lambda^2}\frac{du^\lambda}{d\lambda}
+(2\alpha-2\beta)\lambda\left(\frac{du^\lambda}{d\lambda}\right)^2\\\nonumber
&=&2\lambda\left(\lambda\frac{d^2u^\lambda}{d\lambda^2}+\frac{du^\lambda}{d\lambda}\right)^
2+(2\alpha-2\beta-2)\lambda\left(\frac{du^\lambda}{d\lambda}\right)^2\\\nonumber
&\geq&0.
\end{eqnarray}
Then,
\begin{eqnarray*}
R_1 &\geq&\int_{\partial
B_1}\frac{\beta}{2}\frac{d^2}{d\lambda^2}\left[\lambda\left(
u^\lambda\right)^2\right]-
\frac{1}{2}\frac{d}{d\lambda}\left[\lambda^3\frac{d}{d\lambda}\left(\frac{du^\lambda}{d\lambda}\right)^2\right]
+\frac{\beta}{2} \frac{d}{d\lambda}\left(u^\lambda\right)^2.
\end{eqnarray*}
Now, rescaling back, we can write those  $\lambda$ derivatives in
$R_1$ and $R_2$ as follows.
\[\int_{\partial B_1}\frac{d}{d\lambda}\left(u^\lambda\right)^2
=\frac{d}{d\lambda}\left(\lambda^{\frac{8}{p-1}+1-n}\int_{\partial
B_\lambda}u^2\right).\]
\[\int_{\partial B_1}\frac{d^2}{d\lambda^2}\left[\lambda\left(
u^\lambda\right)^2\right]
=\frac{d^2}{d\lambda^2}\left(\lambda^{\frac{8}{p-1}+2-n}\int_{\partial
B_\lambda}u^2\right).\]
\[\int_{\partial B_1}\frac{d}{d\lambda}\left[\lambda^3\frac{d}{d\lambda}\left(\frac{du^\lambda}{d\lambda}\right)^2\right]
=\frac{d}{d\lambda}\left[\lambda^3\frac{d}{d\lambda}\left(\lambda^{\frac{8}{p-1}+1-n}\int_{\partial
B_\lambda}\left(\frac{4}{p-1}\lambda^{-1}u+\frac{\partial
u}{\partial r}\right)^2\right)\right].\]
\[\frac{d^2}{d\lambda^2}\left(\lambda\int_{\partial
B_1}|\nabla_\theta
u^\lambda|^2\right)=\frac{d^2}{d\lambda^2}\left[\lambda^{1+\frac{8}{p-1}+2+1-n}\int_{\partial
B_\lambda}\left(|\nabla u|^2-|\frac{\partial u}{\partial
r}|^2\right)\right].\]
\[\frac{d}{d\lambda}\left(\int_{\partial
B_1}|\nabla_\theta
u^\lambda|^2\right)=\frac{d}{d\lambda}\left[\lambda^{\frac{8}{p-1}+2+1-n}\int_{\partial
B_\lambda}\left(|\nabla u|^2-|\frac{\partial u}{\partial
r}|^2\right)\right]\]
 Substituting these into
$\frac{d}{d\lambda}E(\lambda;0,u)$ we finish the proof.
\end{proof}

\bigskip

\noindent Denote $c(n,p)=2\alpha-2\beta-2>0$. By \eqref{2.6}, we have
\begin{coro}\label{coro characterization of homogeneous solutions}
\[\frac{d}{dr}E(r;0,u)\geq c(n,p)r^{-n+2+\frac{8}{p-1}}\int_{\partial
B_r}\left(\frac{4}{p-1}r^{-1}u+\frac{\partial u}{\partial
r}\right)^2.\]
In particular, if $E(\lambda;0,u)\equiv const.$ for
all $\lambda\in(r,R)$,  $u$ is homogeneous in $B_R\setminus B_r$:
\[u(x)=|x|^{-\frac{4}{p-1}}u\left(\frac{x}{|x|}\right).\]
\end{coro}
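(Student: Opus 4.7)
The plan is to harvest both assertions from the computation of $\frac{d\widetilde E}{d\lambda}$ already carried out in the proof of Theorem~\ref{thm monotonicity}. After the exact $\lambda$-derivative terms have been absorbed into $E(\lambda;0,u)$, the residual contributions of $R_1$ and $R_2$ are, respectively, the integrals over $\partial B_1$ of $2\lambda^3(\frac{d^2u^\lambda}{d\lambda^2})^2+4\lambda^2\frac{d^2u^\lambda}{d\lambda^2}\frac{du^\lambda}{d\lambda}+(2\alpha-2\beta)\lambda(\frac{du^\lambda}{d\lambda})^2$ and of $2\lambda|\nabla_\theta\frac{du^\lambda}{d\lambda}|^2$. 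The algebraic identity \eqref{2.6} rewrites the first as $2\lambda(\lambda\frac{d^2u^\lambda}{d\lambda^2}+\frac{du^\lambda}{d\lambda})^2+(2\alpha-2\beta-2)\lambda(\frac{du^\lambda}{d\lambda})^2$, and the hypothesis $p>\tfrac{n+4}{n-4}$ (recorded in \eqref{alphabeta}) guarantees $c(n,p):=2\alpha-2\beta-2>0$. Dropping the two non-negative pieces that are not needed gives the scale-free bound $\frac{dE}{d\lambda}\ge c(n,p)\lambda\int_{\partial B_1}(\frac{du^\lambda}{d\lambda})^2$.

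To put the right-hand side in the form claimed by the corollary, I would use \eqref{2.3} to write $\frac{du^\lambda}{d\lambda}(x)=\lambda^{-1}(\frac{4}{p-1}u^\lambda(x)+r\frac{\partial u^\lambda}{\partial r}(x))$ on $\partial B_1$ and then unpack $u^\lambda(x)=\lambda^{4/(p-1)}u(\lambda x)$ via the chain rule, so that at $x\in\partial B_1$ with $y=\lambda x\in\partial B_\lambda$ this quantity becomes $\lambda^{4/(p-1)}(\frac{4}{p-1}\lambda^{-1}u(y)+\frac{\partial u}{\partial r}(y))$. The surface change of variables $dS_y=\lambda^{n-1}dS_x$ between $\partial B_1$ and $\partial B_\lambda$ then converts $\lambda\int_{\partial B_1}(\frac{du^\lambda}{d\lambda})^2$ into $\lambda^{-n+2+8/(p-1)}\int_{\partial B_\lambda}(\frac{4}{p-1}\lambda^{-1}u+\frac{\partial u}{\partial r})^2$, producing exactly the displayed inequality after multiplying by $c(n,p)$.

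For the second part, suppose $E(\lambda;0,u)$ is constant on the interval $(r,R)$, so that $\frac{dE}{d\lambda}\equiv 0$ there. The inequality just established forces the integrand on $\partial B_\lambda$ to vanish: $\frac{4}{p-1}\lambda^{-1}u+\frac{\partial u}{\partial r}=0$ for almost every $\lambda\in(r,R)$, hence on the whole annulus $B_R\setminus B_r$. Regarded as a first-order radial ODE with the angular coordinates acting as parameters, this integrates at once along each ray to give $u(\rho\theta)=\rho^{-4/(p-1)}u(\theta)$, i.e., $u(x)=|x|^{-4/(p-1)}u(x/|x|)$ in $B_R\setminus B_r$.

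No real obstacle is expected: all of the analytic labor has been done inside Theorem~\ref{thm monotonicity}, and this corollary consists of reading off the non-negative remainder and integrating a linear ODE. The only point requiring care is the rescaling between $\partial B_1$ and $\partial B_\lambda$, where one has to track correctly the powers of $\lambda$ arising from $u^\lambda(x)=\lambda^{4/(p-1)}u(\lambda x)$, its radial derivative, and the surface Jacobian $\lambda^{n-1}$.
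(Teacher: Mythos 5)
Your proposal is correct and follows exactly the route the paper takes: read off the non-negative remainder (the $2\lambda|\nabla_\theta\frac{du^\lambda}{d\lambda}|^2$ term from $R_2$ and the square $2\lambda(\lambda\frac{d^2u^\lambda}{d\lambda^2}+\frac{du^\lambda}{d\lambda})^2$ from $R_1$), keep the $(2\alpha-2\beta-2)\lambda(\frac{du^\lambda}{d\lambda})^2$ piece whose sign is guaranteed by \eqref{alphabeta}, rescale $\partial B_1 \to \partial B_\lambda$, and integrate the resulting first-order radial ODE when the derivative vanishes. The paper simply states "By \eqref{2.6}, we have..." and your write-up supplies the rescaling bookkeeping and ODE integration that the paper leaves implicit, with the powers of $\lambda$ tracked correctly.
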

We  end this section with the following observation : in the above computations we just need the inequality (\ref{alphabeta}) to hold. In particular the formula can be easily extended to biharmonic equations with negative exponents. We state the following monotonicity formula for solutions of
\begin{equation}
\label{negative}
\Delta^2 u = -\frac{1}{u^p}, \ u>0 \ \mbox{in} \ \Omega \subset \R^n.
\end{equation}

\begin{lem}
Assume that $p$ satisfies
\begin{equation}
n-2+ \frac{8}{p+1}> \frac{4}{p+1} (\frac{4}{p+1} + n-2)
\end{equation}
Let $u$ be a classical solution to (\ref{negative}) in $B_r (x) \subset B_R (x) \subset \Omega$. Then the following quantity

\begin{eqnarray*}
\tilde{ E}(r;x,u)&:=&r^{4\frac{p-1}{p+1}-n}
\int_{B_r (x)}\frac{1}{2}(\Delta u)^2-\frac{1}{p-1}u^{1-p}\\
&&-\frac{2}{p+1}\left(n-2+\frac{4}{p+1}\right)r^{-\frac{8}{p+1}+1-n}\int_{\partial
B_r (x)}u^2\\
&&-\frac{2}{p+1}\left(n-2+\frac{4}{p+1}\right)
\frac{d}{d r}\left(r^{-\frac{8}{p+1}+2-n}\int_{\partial
B_r (x)}u^2\right)\\
&&+\frac{r^3}{2}\frac{d}{d r}\left[ r^{-\frac{8}{p+1}+1-n}\int_{\partial
B_r (x)}\left(-\frac{4}{p+1}r^{-1}u+\frac{\partial
u}{\partial r}\right)^2\right]\\
&&+\frac{1}{2}\frac{d}{d r}\left[r^{-\frac{8}{p+1}+4-n}\int_{\partial
B_r (x)}\left(|\nabla u|^2-|\frac{\partial u}{\partial
r}|^2\right)\right]\\
&&+\frac{1}{2}r^{-\frac{8}{p+1}+3-n}\int_{\partial
B_r (x)}\left(|\nabla u|^2-|\frac{\partial u}{\partial
r}|^2\right)
\end{eqnarray*}
is increasing in $r$. Furthermore there exists $c_0>0$ such that
\begin{equation}
\frac{d}{dr}E(r;0,u)\geq c_0 r^{-n+2-\frac{8}{p+1}}\int_{\partial
B_r}\left(-\frac{4}{p+1}r^{-1}u+\frac{\partial u}{\partial
r}\right)^2.
\end{equation}

\end{lem}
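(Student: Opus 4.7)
The plan is to run the proof of Theorem~\ref{thm monotonicity} essentially verbatim, only keeping careful track of the sign changes induced by the new scaling exponent and the new sign of the nonlinearity. The equation $\Delta^2 u=-u^{-p}$ is variational with density $\tfrac{1}{2}(\Delta u)^2-\tfrac{1}{p-1}u^{1-p}$ and invariant under $u^\lambda(x):=\lambda^{-4/(p+1)}u(\lambda x)$, which dictates the bulk integrand and the weight $r^{4(p-1)/(p+1)-n}$ in $\tilde E$. Writing $v:=\Delta u$ and $v^\lambda:=\lambda^{-4/(p+1)+2}v(\lambda x)$, a rescaling followed by the same integration by parts as in \eqref{2.1} yields
\begin{equation*}
\frac{d\widetilde E}{d\lambda}=\int_{\partial B_1}v^\lambda\,\partial_r\frac{du^\lambda}{d\lambda}-\partial_r v^\lambda\cdot\frac{du^\lambda}{d\lambda}.
\end{equation*}

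Next, replace the radial/$\lambda$-derivative identities \eqref{2.3}--\eqref{2.5} by their negative-exponent analogs: on $\partial B_1$,
\begin{equation*}
\frac{du^\lambda}{d\lambda}=\frac{1}{\lambda}\Bigl(-\frac{4}{p+1}u^\lambda+r\frac{\partial u^\lambda}{\partial r}\Bigr),\qquad \frac{dv^\lambda}{d\lambda}=\frac{1}{\lambda}\Bigl(\Bigl(2-\frac{4}{p+1}\Bigr)v^\lambda+r\frac{\partial v^\lambda}{\partial r}\Bigr),
\end{equation*}
and iterate to express $\partial_r^2 u^\lambda$, and then $v^\lambda=\partial_r^2u^\lambda+(n-1)r^{-1}\partial_ru^\lambda+r^{-2}\Delta_\theta u^\lambda$, purely in terms of $u^\lambda$, $du^\lambda/d\lambda$, $d^2u^\lambda/d\lambda^2$ and $\Delta_\theta u^\lambda$. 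Collecting coefficients introduces the new constants
\begin{equation*}
\alpha':=n-1+\frac{8}{p+1},\qquad \beta':=\frac{4}{p+1}\Bigl(\frac{4}{p+1}+n-2\Bigr),
\end{equation*}
which play the role of $\alpha,\beta$ from the original proof (the signs in $\alpha'$ and in the displayed formula for $v^\lambda$ flip precisely where $4/(p-1)$ is replaced by $-4/(p+1)$).

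Substitute into $d\widetilde E/d\lambda$ and split as $R_1+R_2$ exactly as before. The angular part $R_2$ is unchanged in structure: integration by parts on $\partial B_1$ produces the boundary $|\nabla u|^2-|\partial_r u|^2$ pieces plus a nonnegative remainder $2\lambda\int|\nabla_\theta \tfrac{du^\lambda}{d\lambda}|^2$. The radial part $R_1$ reduces, after the same two algebraic identities involving $\lambda f f''$ and $-\lambda^3 f' f'''$, to
\begin{equation*}
\int_{\partial B_1}\!\!2\lambda(\lambda u''^\lambda+u'^\lambda)^2+(2\alpha'-2\beta'-2)\lambda(u'^\lambda)^2+\frac{\beta'}{2}\frac{d^2}{d\lambda^2}[\lambda(u^\lambda)^2]-\frac{1}{2}\frac{d}{d\lambda}\bigl[\lambda^3\tfrac{d}{d\lambda}(u'^\lambda)^2\bigr]+\frac{\beta'}{2}\frac{d}{d\lambda}(u^\lambda)^2,
\end{equation*}
and the stated hypothesis $n-2+\tfrac{8}{p+1}>\tfrac{4}{p+1}(\tfrac{4}{p+1}+n-2)$ is exactly $\alpha'-\beta'>1$, which keeps \eqref{2.6} nonnegative and gives the explicit lower bound $c_0=2\alpha'-2\beta'-2$. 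The last step is to undo the rescaling, converting the $\lambda$-derivatives of the pointwise quantities on $\partial B_1$ into $\lambda$-derivatives of the corresponding weighted surface integrals on $\partial B_\lambda$; this matches the six boundary terms in the definition of $\tilde E$ in the statement, with the sign of the $u^2$ pieces reversed because $\beta'$ enters with opposite sign to $\beta$, and with $-\tfrac{4}{p+1}r^{-1}u$ in place of $\tfrac{4}{p-1}r^{-1}u$ in the square term.

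The only genuine task is bookkeeping: verifying that every sign and every exponent that changes when $4/(p-1)\leadsto -4/(p+1)$ and $|u|^{p+1}\leadsto -u^{1-p}$ lines up with the $\tilde E$ written in the statement. No new analytic estimate is required; the hypothesis on $p$ is used solely to preserve the positivity of the quadratic form \eqref{2.6}, which is the sole non-algebraic ingredient of the argument.
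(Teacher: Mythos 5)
Your proposal is correct and matches the paper's intent exactly: the paper treats this lemma as a direct observation following the proof of Theorem~\ref{thm monotonicity}, noting that only the inequality $\alpha-\beta>1$ is used, and you unpack precisely that remark by tracking the substitution $4/(p-1)\rightsquigarrow -4/(p+1)$, identifying $\alpha'=n-1+\tfrac{8}{p+1}$, $\beta'=\tfrac{4}{p+1}(\tfrac{4}{p+1}+n-2)$, and checking that the stated hypothesis on $p$ is exactly $\alpha'-\beta'>1$. The bookkeeping of signs and exponents in $\tilde E$ (including the sign reversal of the $u^2$ boundary terms coming from $-\beta'/2$) is carried out correctly, so nothing is missing.
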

In the rest of the paper, sometimes we use $E(r;x)$ or $E(r)$ if no confusion occurs.

\section{Homogeneous solutions}
\setcounter{equation}{0}

For the applications below, we give a non-existence result for
homogeneous stable solution of \eqref{equation}. (This corresponds to the tangent cone analysis of Fleming.) By the
Hardy-Rellich inequality, this result is sharp.
\begin{thm}\label{thm homogeneous solution}
Let $u\in W^{2,2}_{loc}(\mathbb{R}^n\setminus\{0\})$ be a
homogeneous, stable solution of \eqref{equation} in
$\mathbb{R}^n\setminus\{0\}$, for $p\in(\frac{n+4}{n-4},p_c(n))$.
Assume that $|u|^{p+1}\in L^1_{loc}(\mathbb{R}^n\setminus\{0\})$.
Then $u\equiv 0$.
\end{thm}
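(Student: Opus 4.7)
Plan: Since $u$ is homogeneous of degree $-\gamma:=4/(p-1)$, the starting point is to write $u(x)=|x|^{-\gamma}w(\theta)$ with $\theta=x/|x|$ and $w\in W^{2,2}(S^{n-1})$. Expanding $\Delta^{2}$ in polar coordinates reduces the biharmonic equation to the fourth order equation on the sphere
$$\Delta_{\theta}^{2}w+A\,\Delta_{\theta}w+K_{0}\,w=|w|^{p-1}w,$$
where $K_{0}$ is as in \eqref{singsing} and $A=(\gamma+2)(\gamma+4-n)+\gamma(\gamma+2-n)$; standard bootstrap then makes $w$ smooth. Two algebraic identities drive the argument: writing $c_{n}:=n(n-4)/4$,
$$2c_{n}+A=2\bigl(\gamma-\tfrac{n-4}{2}\bigr)^{2}\ge 0, \qquad K_{0}\le c_{n}^{2}\ \ \text{for}\ p\ge p_{S}(n),$$
both transparent from the factorization $K_{0}=\gamma(\gamma+2)(n-2-\gamma)(n-4-\gamma)$ and its symmetry about $\gamma=(n-4)/2$.

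I next transplant the stability condition of $u$ on $\R^{n}$ to the sphere by testing against separated functions $\phi(x)=|x|^{-(n-4)/2}\chi(\log|x|)\psi(\theta)$ (with $\chi\in C^{\infty}_{c}(\R)$, $\psi\in C^{\infty}(S^{n-1})$) and passing to the variables $(s,\theta)=(\log|x|,x/|x|)$. All radial weights cancel precisely, and the stability inequality becomes
$$\int_{\R}\chi^{2}\,ds\cdot\Bigl\{\int_{S^{n-1}}\bigl[(-\Delta_{\theta}+c_{n})\psi\bigr]^{2}-p\int_{S^{n-1}}|w|^{p-1}\psi^{2}\Bigr\}+O\bigl(\|\chi'\|_{2}^{2}+\|\chi''\|_{2}^{2}\bigr)\ge 0.$$
Choosing $\chi$ to be a cutoff of $[-R,R]$ and letting $R\to\infty$ makes the remainder $o(\int\chi^{2})$, so I obtain the sphere-level stability
$$\int_{S^{n-1}}\bigl[(-\Delta_{\theta}+c_{n})\psi\bigr]^{2}d\theta\ge p\int_{S^{n-1}}|w|^{p-1}\psi^{2}d\theta,\qquad\forall\,\psi\in H^{2}(S^{n-1}).$$

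I then test this with $\psi=w$. Writing $B=-\Delta_{\theta}+c_{n}$ and using the sphere equation to compute $B^{2}w=|w|^{p-1}w-(A+2c_{n})\Delta_{\theta}w+(c_{n}^{2}-K_{0})w$, integration against $w$ gives $\int(Bw)^{2}=\int|w|^{p+1}+(A+2c_{n})\int|\nabla_{\theta}w|^{2}+(c_{n}^{2}-K_{0})\int w^{2}$; eliminating $\int|w|^{p+1}$ with the equation multiplied by $w$ then yields
$$(p-1)\int(\Delta_{\theta}w)^{2}+(pK_{0}-c_{n}^{2})\int w^{2}\le(2c_{n}+pA)\int|\nabla_{\theta}w|^{2}.$$
For $p\in(p_{S},p_{c})$ the Joseph-Lundgren definition of $p_{c}$ via the sharp Hardy-Rellich constant forces $pK_{0}>c_{n}^{2}$, so the left-hand side is strictly positive unless $w\equiv 0$. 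The argument is completed by proving $2c_{n}+pA<0$ on the same range, which after the substitution $s:=(n-4)(p-1)$ reduces to the cubic-in-$s$ polynomial inequality
$$f(s):=ns^{3}-(n-4)(s+n-4)(8-s)^{2}>0,\qquad s\in\bigl(8,(n-4)(p_{c}-1)\bigr).$$
This last step is the main obstacle: at the endpoint $s=8$ (i.e.\ $p=p_{S}$) one has $f(8)=512n>0$ since the right factor vanishes, but for $n$ sufficiently large the derivative $f'$ acquires real roots and $f$ becomes non-monotone on $[8,\infty)$, so one must locate its local extrema and verify that the relevant range $(8,(n-4)(p_{c}-1))$ lies in the region where $f$ remains positive, invoking the defining relation $p_{c}K_{0}(p_{c})=c_{n}^{2}$. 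Once the sign $2c_{n}+pA<0$ is secured, the right-hand side of the displayed inequality is $\le 0$ while the left is $\ge 0$, forcing $\int(\Delta_{\theta}w)^{2}=\int w^{2}=0$ and hence $w\equiv 0$.
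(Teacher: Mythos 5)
Your strategy is essentially identical to the paper's: write $u=r^{-\gamma}w(\theta)$, derive the fourth-order equation for $w$ on $S^{n-1}$, transplant stability by testing against $|x|^{-(n-4)/2}\chi(\log|x|)\psi(\theta)$, observe that the radial weights cancel exactly, and finally eliminate $\int_{S^{n-1}}|w|^{p+1}$ between the equation and the stability inequality to obtain a quadratic form in $(\Delta_\theta w,\nabla_\theta w,w)$ that must be nonpositive. The paper tests directly with $\psi=w$ (it does not first prove sphere-level stability for arbitrary $\psi$), but that is cosmetic, and your computation of $\Delta\phi$ in $(s,\theta)=(\log r,\theta)$ variables and the cancellation of weights is correct.

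However, there is a genuine gap at the crucial sign verification, and you acknowledge it yourself. To conclude $w\equiv 0$ you need \emph{both} $pK_0>c_n^2$ \emph{and} $2c_n+pA<0$ (in the paper's notation, $pJ_2-\tfrac{n^2(n-4)^2}{16}>0$ and $pJ_1-\tfrac{n(n-4)}{2}>0$) on the full range $p\in(p_S(n),p_c(n))$. The first is indeed the defining inequality for $p<p_c(n)$ via the sharp Hardy--Rellich constant. For the second, your identity $2c_n+A=2\bigl(\gamma-\tfrac{n-4}{2}\bigr)^2\ge 0$ is correct but points in the \emph{wrong} direction: you need $(p-1)A$ negative enough to overcome it, not $2c_n+A$ nonnegative. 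Your attempted reduction to the cubic polynomial $f(s)$ is then left explicitly unfinished (``this last step is the main obstacle''), so the proposal as written does not prove the theorem. The paper closes exactly this point by citing the computation on p.~338 of Gazzola--Grunau--Sweers \cite{G-G-S}; you should either invoke that reference or actually carry out the (elementary but nontrivial) verification that $pJ_1>n(n-4)/2$ for all $p\in(p_S(n),p_c(n))$.
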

\begin{proof}
There exists a $w\in W^{2,2}(\mathbb{S}^{n-1})$ such that in polar
coordinates
\[u(r,\theta)=r^{-\frac{4}{p-1}}w(\theta).\]
Since $u\in W^{2,2}(B_2\setminus B_1)\cap L^{p+1}(B_2\setminus
B_1)$, $w\in W^{2,2}(\mathbb{S}^{n-1})\cap
L^{p+1}(\mathbb{S}^{n-1})$.

Direct calculations show that $w$ satisfies (in
$W^{2,2}(\mathbb{S}^{n-1})$ sense)
\begin{equation}\label{equation on sphere}
\Delta_\theta^2 w-J_1\Delta_\theta w+J_2 w=w^p,
\end{equation}
where
\[J_1=\left(\frac{4}{p-1}+2\right)\left(n-4-\frac{4}{p-1}\right)+\frac{4}{p-1}\left(n-2-\frac{4}{p-1}\right),\]
\[J_2=\frac{4}{p-1}\left(\frac{4}{p-1}+2\right)\left(n-4-\frac{4}{p-1}\right)\left(n-2-\frac{4}{p-1}\right).\]

Because $w\in W^{2,2}(\mathbb{S}^{n-1})$, we can test
\eqref{equation on sphere} with $w$, and we get
\begin{equation}\label{3.1}
\int_{\mathbb{S}^{n-1}}|\Delta_\theta w|^2+J_1|\nabla_\theta
w|^2+J_2w^2=\int_{\mathbb{S}^{n-1}}|w|^{p+1}.
\end{equation}

For any $\varepsilon>0$, choose an $\eta_\varepsilon\in
C_0^{\infty}((\frac{\varepsilon}{2},\frac{2}{\varepsilon}))$, such
that $\eta_\varepsilon\equiv 1$ in
$(\varepsilon,\frac{1}{\varepsilon})$, and
\[r|\eta_\varepsilon^{\prime}(r)|+r^2|\eta_\varepsilon^{\prime\prime}(r)|\leq 64~~\text{for all}~~r>0.\]
Because $w\in W^{2,2}(\mathbb{S}^{n-1})\cap
L^{p+1}(\mathbb{S}^{n-1})$,
$r^{-\frac{n-4}{2}}w(\theta)\eta_\varepsilon(r)$ can be approximated
by $C_0^\infty(B_{4/\varepsilon}\setminus B_{\varepsilon/4})$
functions in $W^{2,2}(B_{2/\varepsilon}\setminus
B_{\varepsilon/2})\cap L^{p+1}(B_{2/\varepsilon}\setminus
B_{\varepsilon/2})$. Hence in the stability condition for $u$ we are allowed to
choose a test function of the form
$r^{-\frac{n-4}{2}}w(\theta)\eta_\varepsilon(r)$. Note that
\begin{eqnarray*}
\Delta\left(r^{-\frac{n-4}{2}}w(\theta)\eta_\varepsilon(r)\right)&=&
-\frac{n(n-4)}{4}r^{-\frac{n}{2}}\eta_{\varepsilon}(r)w(\theta)
+r^{-\frac{n}{2}}\eta_{\varepsilon}(r)\Delta_\theta w(\theta)\\
&&-3r^{-\frac{n}{2}+1}\eta_{\varepsilon}^{\prime}(r)w(\theta)
+r^{-\frac{n}{2}+2}\eta_{\varepsilon}^{\prime\prime}(r)w(\theta).
\end{eqnarray*}
Substituting this into the stability condition for $u$, we get
\begin{eqnarray*}
&&p\left(\int_{\mathbb{S}^{n-1}}|w|^{p+1}d\theta\right)
\left(\int_0^{+\infty}r^{-1}\eta_\varepsilon(r)^2dr\right)\\
&\leq& \left(\int_{\mathbb{S}^{n-1}}\left(|\Delta_\theta
w|^2+\frac{n(n-4)}{2}|\nabla_\theta
w|^2+\frac{n^2(n-4)^2}{16}w^2\right)d\theta\right)
\left(\int_0^{+\infty}r^{-1}\eta_\varepsilon(r)^2dr\right)\\
&&+O[\left(\int_0^{+\infty}r\eta_{\varepsilon}^{\prime}(r)^2
+r^3\eta_{\varepsilon}^{\prime\prime}(r)^2+r^2|\eta_{\varepsilon}^{\prime}(r)|\eta_{\varepsilon}(r)
+r\eta_{\varepsilon}(r)|\eta_{\varepsilon}^{\prime\prime}(r)|dr\right)\\
&&\ \ \ \ \ \ \ \ \ \times
\left(\int_{\mathbb{S}^{n-1}}w(\theta)^2+|\nabla_\theta
w(\theta)|^2d\theta\right)].
\end{eqnarray*}
Note that
\[\int_0^{+\infty}r^{-1}\eta_\varepsilon(r)^2dr\geq|\log\varepsilon|,\]
\[\int_0^{+\infty}r\eta_{\varepsilon}^{\prime}(r)^2
+r^3\eta_{\varepsilon}^{\prime\prime}(r)^2+r^2|\eta_{\varepsilon}^{\prime}(r)|\eta_{\varepsilon}(r)
+r\eta_{\varepsilon}(r)|\eta_{\varepsilon}^{\prime\prime}(r)|dr\leq
C,\] for some constant $C$ independent of $\varepsilon$. By letting
$\varepsilon\to 0$, we obtain
\[p\int_{\mathbb{S}^{n-1}}|w|^{p+1}d\theta\leq
\int_{\mathbb{S}^{n-1}}|\Delta_\theta
w|^2+\frac{n(n-4)}{2}|\nabla_\theta w|^2+\frac{n^2(n-4)^2}{16}w^2.\]
Substituting \eqref{3.1} into this we get
\[\int_{\mathbb{S}^{n-1}}(p-1)|\Delta_\theta
w|^2+(pJ_1-\frac{n(n-4)}{2})|\nabla_\theta
w|^2+(pJ_2-\frac{n^2(n-4)^2}{16})w^2\leq 0.\] If
$\frac{n+4}{n-4}<p<p_c(n)$, then $p-1>0$, $pJ_1-\frac{n(n-4)}{2}>0$
and $pJ_2-\frac{n^2(n-4)^2}{16}>0$ (cf. p. 338 in \cite{G-G-S}), so
$w\equiv 0$ and then $u\equiv 0$.
\end{proof}
For applications in Section 6, we record the form of $E(R;0,u)$ for
a homogeneous solution $u$.
\begin{rmk}
\label{rmk E for homogeneous solutions} Suppose
$u(r,\theta)=r^{-\frac{4}{p-1}}w(\theta)$ is a homogeneous solution,
where $p>\frac{n+4}{n-4}$ and $w\in W^{2,2}(\mathbb{S}^{n-1})\cap
L^{p+1}(\mathbb{S}^{n-1})$. In this case, for any $r>0$,
\[\int_{B_r\setminus B_{r/2}}|\Delta u|^2+|u|^{p+1}\leq cr^{n-4\frac{p+1}{p-1}}.\]
Because $n-4\frac{p+1}{p-1}<0$, by choosing $r=2^{-i}R$ and summing
in $i$ from $1$ to $+\infty$, we see
\[\int_{B_R}|\Delta u|^2+|u|^{p+1}\leq cR^{n-4\frac{p+1}{p-1}},\]
which converges to $0$ as $R\to 0$. Hence for any $R>0$, $E(R;0,u)$
is well-defined and by the homogeneity, it equals $E(1;0,u)$. By
definition
\begin{eqnarray*}
E(1;0,u)&=&\int_{B_1}\frac{1}{2}(\Delta
u)^2-\frac{1}{p+1}|u|^{p+1}\\
&&+\frac{4}{p-1}\left(n-2-\frac{4}{p-1}\right)\int_{\partial
B_1}u^2+\int_{\partial B_1}|\nabla_\theta u|^2\\
&=&\left(\frac{1}{2}-\frac{1}{p+1}\right)\int_{B_1}|u|^{p+1}+\frac{1}{2}\int_{\partial
B_1}\left(\frac{\partial u}{\partial r}\Delta
u-u\frac{\partial\Delta u}{\partial
r}\right)\\
&&+\frac{4}{p-1}\left(n-2-\frac{4}{p-1}\right)\int_{\partial
B_1}u^2+\int_{\partial B_1}|\nabla_\theta u|^2.
\end{eqnarray*}
By noting that
\[\frac{\partial u}{\partial r}=-\frac{4}{p-1}r^{-1}u,\ \ \ \
\frac{\partial^2 u}{\partial
r^2}=\frac{4}{p-1}\left(\frac{4}{p-1}+1\right)r^{-2}u,\]
\[\frac{\partial \Delta u}{\partial r}=-\left(2+\frac{4}{p-1}\right)r^{-1}\Delta u,\ \ \ \
\Delta
u=\frac{4}{p-1}\left(\frac{4}{p-1}+2-n\right)r^{-2}u+r^{-2}\Delta_\theta
u,\] we get
\[E(1;0,u)
 =\left(\frac{1}{2}-\frac{1}{p+1}\right)\int_{B_1}|u|^{p+1}
=\frac{1}{n-4\frac{p+1}{p-1}}\left(\frac{1}{2}-\frac{1}{p+1}\right)\int_{\partial
B_1}|w|^{p+1}.\] Replacing $|u|^{p+1}$ by $(\Delta u)^2$, we also
have
\begin{eqnarray*}
E(1;0,u)&=&\left(\frac{1}{2}-\frac{1}{p+1}\right)\int_{B_1}(\Delta
u)^2+\frac{p-1}{p+1}\int_{\partial B_1}|\nabla_\theta u|^2\\
&&+\frac{4}{p+1}\left(n-2-\frac{4}{p-1}\right)\int_{\partial
B_1}u^2.
\end{eqnarray*}
\end{rmk}

\section{The blow down analysis}
\label{sect liouville stable} \setcounter{equation}{0}

In this section we use the blow-down analysis to prove the
Liouville theorem for stable solutions. Throughout this section $u$
always denotes a smooth stable solution of \eqref{equation} in
$\mathbb{R}^n$.
\begin{thm}\label{thm Liouville for stable}
Let $u$ be a smooth stable solution of \eqref{equation} on
$\mathbb{R}^n$. If $1<p<p_c(n)$, then $u\equiv 0$.
\end{thm}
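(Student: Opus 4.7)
The plan is to follow the Fleming-style blow-down strategy sketched in the introduction, with the monotonicity formula of Theorem~\ref{thm monotonicity} playing the role of the density. Since that formula requires $p>p_{S}(n)$, I concentrate on the range $p_{S}(n)<p<p_{c}(n)$, where the bulk of the new work lies; the small-exponent range $1<p\le p_{S}(n)$ can be handled by adapting the same scheme or by direct classical arguments (for $p=p_{S}(n)$ stability already gives triviality via the finite-energy observation recorded after Theorem~\ref{thmstable}).

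The first step is the a priori estimate
\[
\int_{B_R}|u|^{p+1}+|\Delta u|^{2}\;\le\;C\,R^{\,n-\frac{4(p+1)}{p-1}}\qquad\text{for every }R>0,
\]
valid for every stable solution, positive or sign-changing. I would obtain this by inserting suitable cutoff multiples of $u$ into the stability quadratic form, relying on the interpolated Hardy-type inequality $\sqrt{p}\,\int|u|^{(p-1)/2}\phi^{2}\le\int|\nabla\phi|^{2}$ of Cowan--Ghoussoub (which bypasses Souplet's inequality and therefore applies without any sign restriction on $u$), and then bootstrapping with the equation \eqref{equation} to control $\Delta u$. Trace inequalities on $\partial B_{R}$ together with a scaling argument show that every boundary term appearing in $E(R;0,u)$ is controlled by the same power of $R$, so that $E(\,\cdot\,;0,u)$ is uniformly bounded on $(0,\infty)$. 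Combined with the monotonicity of $E$ and the fact that smoothness of $u$ at the origin gives $\lim_{r\to 0^{+}}E(r;0,u)=0$, this produces a finite limit $E_{\infty}:=\lim_{R\to\infty}E(R;0,u)$.

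Next I implement the blow-down. Setting $u^{\lambda}(x):=\lambda^{4/(p-1)}u(\lambda x)$, the integral estimate rescales to uniform $W^{4,2}_{\mathrm{loc}}\cap L^{p+1}_{\mathrm{loc}}$ bounds for $\{u^{\lambda}\}_{\lambda\ge 1}$ on $\R^{n}\setminus\{0\}$. Along a sequence $\lambda_{k}\to\infty$ one then has $u^{\lambda_{k}}\to u^{\infty}$ strongly in $L^{p+1}_{\mathrm{loc}}(\R^{n}\setminus\{0\})$ and weakly in $W^{4,2}_{\mathrm{loc}}(\R^{n}\setminus\{0\})$, which is enough to pass both \eqref{equation} and the stability inequality to the limit, so that $u^{\infty}$ is itself a stable weak solution. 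By the scale-invariance of $E$,
\[
E(r;0,u^{\lambda_{k}})=E(\lambda_{k}\,r;0,u)\;\longrightarrow\;E_{\infty}\qquad\text{for every }r>0,
\]
hence $E(\,\cdot\,;0,u^{\infty})\equiv E_{\infty}$. Corollary~\ref{coro characterization of homogeneous solutions} then forces $u^{\infty}$ to be homogeneous of degree $-4/(p-1)$, and Theorem~\ref{thm homogeneous solution} yields $u^{\infty}\equiv 0$; in particular $E_{\infty}=0$.

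Combining $E_{\infty}=0$ with $\lim_{r\to 0^{+}}E(r;0,u)=0$ and the monotonicity of $E$ gives $E\equiv 0$ on $(0,\infty)$. The pointwise derivative inequality of Corollary~\ref{coro characterization of homogeneous solutions} then forces $\tfrac{4}{p-1}r^{-1}u+\partial_{r}u\equiv 0$ on every sphere, so $u$ is itself homogeneous of degree $-4/(p-1)$; smoothness of $u$ at the origin is compatible with this only if $u\equiv 0$. I expect the main obstacle to be the sign-changing version of the a priori $L^{p+1}$ estimate, since all previous positive-solution approaches rely on Souplet's inequality, which has no sign-changing analogue; producing an estimate that is independent of sign and simultaneously captures the entire exponent range up to $p_{c}(n)$ is the key new analytic input that makes the blow-down go through. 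A secondary technical point is to verify that the $L^{p+1}_{\mathrm{loc}}$ compactness of $\{u^{\lambda_{k}}\}$ off the origin is strong enough to transfer both the nonlinearity and the stability inequality to $u^{\infty}$, which should follow from strong convergence and Vitali's theorem.
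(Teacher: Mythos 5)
Your overall Fleming-type architecture---derive a scale-invariant $L^{p+1}\cap\dot H^2$ estimate, bound $E$, blow down, classify the homogeneous stable limit as trivial, then run the monotonicity formula to force $E\equiv 0$ and homogeneity of $u$ itself---is precisely what the paper does in Section~\ref{sect liouville stable}. Two of your steps, however, have genuine gaps, and one is circular.

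First, the a priori estimate for sign-changing $u$. You propose to use the Cowan--Ghoussoub interpolated inequality $\sqrt p\int|u|^{(p-1)/2}\phi^2\le\int|\nabla\phi|^2$, asserting that it ``bypasses Souplet's inequality and therefore applies without any sign restriction.'' That is not established, and the paper explicitly remarks that the Cowan--Ghoussoub line of work considers only positive solutions because of its reliance on Souplet's inequality \eqref{souplet}. Instead, the paper tests the stability form directly with $u\eta$ (Lemma~\ref{lem 4.2}, adapted from Ye--Wei), which requires no sign condition, and then an elementary H\"older/absorption argument (Corollary~\ref{coro 4.3}) produces exactly your estimate $\int_{B_R}v^2+|u|^{p+1}\le CR^{n-4\frac{p+1}{p-1}}$ for sign-changing $u$. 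You even identify this as ``the key new analytic input'' and the main obstacle, but your plan does not actually resolve it; the resolution is to drop the interpolated inequality altogether.

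Second, the passage from $u^\infty\equiv 0$ to $E_\infty=0$. Weak $W^{4,2}_{\mathrm{loc}}$ compactness is enough to pass the equation and stability to the limit, and by lower semicontinuity enough (as in Lemma~4.5) to show $u^\infty$ is homogeneous---one only needs $E(R)-E(r)\to 0$ to dominate the integral in Corollary~\ref{coro characterization of homogeneous solutions}, and that is a liminf inequality. But your chain $E(r;0,u^{\lambda_k})\to E(r;0,u^\infty)=0$ needs convergence of the boundary terms in $E$, which involve second tangential derivatives of $u$ on spheres, and weak $W^{4,2}_{\mathrm{loc}}$ does not give strong $W^{2,2}(\partial B_r)$ convergence for a fixed $r$. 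The paper closes this in Lemma~\ref{lem convergence of E} by first upgrading to $\int_{B_3}(v^\lambda)^2+|u^\lambda|^{p+1}\to 0$ via \eqref{new estimate of p+1}, then using interior $L^2$ estimates to get strong $W^{2,2}(B_2)$ convergence, and finally an averaging/Fubini selection of a radius $r_0\in(1,2)$ at which $\|u^{\lambda_i}\|_{W^{2,2}(\partial B_{r_0})}\to 0$. Without some version of this, $E_\infty=0$ is not justified.

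Third, the critical case: you say ``for $p=p_S(n)$ stability already gives triviality via the finite-energy observation recorded after Theorem~\ref{thmstable},'' but that statement is itself the conclusion of the present theorem (and of Theorem~\ref{thmstable}), so the argument is circular. The paper proves $p\le p_S(n)$ separately: for $p<p_S(n)$ it sends $R\to\infty$ in \eqref{estimate of p+1}; for $p=p_S(n)$ it first deduces $\int_{\R^n}v^2+|u|^{p+1}<\infty$, then feeds the vanishing of annular energies back into \eqref{new estimate of p+1} (where the powers of $R$ cancel because $n=4\frac{p+1}{p-1}$) to conclude $u\equiv 0$.
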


The following lemma appears in \cite{Y-W} for positive solution. It remains valid for sign-changing solutions, see  also \cite{HHY}.

\begin{lem}\label{lem 4.2}
Let $u$ be a smooth stable solution of \eqref{equation} and let $v =
\Delta u$. Then for some $C$ we have
\begin{align}
\label{7a}
\begin{aligned}
\int_{\R^n} (v^2 + |u|^{p+1})  \eta^2 \leq C \int_{\R^n} u^2\left(
|\nabla(\Delta \eta)\cdot\nabla \eta| + (\Delta \eta)^2 +
|\Delta(|\nabla\eta|^2)| \right)dx
\\
+ C\int_{\R^n} | uv | |\nabla \eta|^2dx
\end{aligned}
\end{align}
 for all $\eta\in C_0^\infty(\R^n)$.
\end{lem}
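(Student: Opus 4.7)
The plan is to combine the stability inequality (with test function $\phi=u\eta$) with an energy identity obtained by multiplying the equation by $u\eta^2$, and then to process the remaining cross terms by integration by parts and Cauchy--Schwarz until only the quantities appearing on the right of \eqref{7a} survive.

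First I would derive the energy identity. Setting $v=\Delta u$ and expanding
\begin{equation*}
\Delta(u\eta^2) \;=\; \eta^2 v + 4\eta\,\nabla u\cdot\nabla\eta + 2u|\nabla\eta|^2 + 2u\eta\,\Delta\eta,
\end{equation*}
two integrations by parts give
\begin{equation*}
\int_{\R^n}|u|^{p+1}\eta^2 \;=\; \int v\,\Delta(u\eta^2) \;=\; \int v^2\eta^2 + 4\!\int\!\eta v\,\nabla u\cdot\nabla\eta + 2\!\int\!uv|\nabla\eta|^2 + 2\!\int\!uv\eta\,\Delta\eta.
\end{equation*}
Next I would plug $\phi=u\eta$ into the stability condition \eqref{1.2} and expand $(\Delta(u\eta))^2 = (\eta v + 2\nabla u\cdot\nabla\eta + u\Delta\eta)^2$. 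Subtracting the energy identity from the resulting inequality cancels $\int v^2\eta^2$, $4\!\int\!\eta v\,\nabla u\cdot\nabla\eta$ and $2\!\int\!uv\eta\,\Delta\eta$, leaving
\begin{equation*}
(p-1)\!\int |u|^{p+1}\eta^2 \;\leq\; 4\!\int(\nabla u\cdot\nabla\eta)^2 + \int u^2(\Delta\eta)^2 + 4\!\int u\,\Delta\eta\,\nabla u\cdot\nabla\eta - 2\!\int uv|\nabla\eta|^2.
\end{equation*}

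To remove the remaining $\nabla u$ factors I would use $2u\nabla u=\nabla(u^2)$ in the third term and integrate by parts once, obtaining a combination of $\int u^2(\Delta\eta)^2$ and $\int u^2\,\nabla(\Delta\eta)\cdot\nabla\eta$; for the squared term I would use $4(\nabla u\cdot\nabla\eta)^2\le 4|\nabla u|^2|\nabla\eta|^2$ together with the identity
\begin{equation*}
\int|\nabla u|^2|\nabla\eta|^2 \;=\; -\!\int uv|\nabla\eta|^2 + \tfrac12\!\int u^2\,\Delta(|\nabla\eta|^2),
\end{equation*}
which follows from one integration by parts and $\Delta u=v$. Taking absolute values yields the desired bound for $\int|u|^{p+1}\eta^2$. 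For the bound on $\int v^2\eta^2$ I would go back to the energy identity, estimate $\bigl|\!\int\!\eta v\,\nabla u\cdot\nabla\eta\bigr|$ and $\bigl|\!\int\!uv\eta\,\Delta\eta\bigr|$ by Cauchy--Schwarz with a small parameter $\varepsilon$, reuse the identity above to bound $\int|\nabla u|^2|\nabla\eta|^2$, absorb the $\varepsilon\!\int v^2\eta^2$ piece into the left-hand side, and invoke the bound on $\int|u|^{p+1}\eta^2$ just obtained. Adding the two estimates produces \eqref{7a}.

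The main technical obstacle is the cross term $\int\!\eta v\,\nabla u\cdot\nabla\eta$: integrating by parts in either direction merely reproduces it, so it cannot be eliminated algebraically and must be controlled by Cauchy--Schwarz against $\int v^2\eta^2$. One therefore has to arrange the constants so that the resulting multiple of $\int v^2\eta^2$ on the right is strictly less than $1$ and can be absorbed into the left-hand side; once this absorption is engineered, the only remaining integrals are precisely those on the right-hand side of \eqref{7a}.
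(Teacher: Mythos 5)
Your proposal is correct and rests on the same core strategy as the paper: test the stability inequality with $u\eta$, pair it with the identity obtained by multiplying $\Delta^2 u=|u|^{p-1}u$ by $u\eta^2$, and then trade all remaining $\nabla u$ factors for $u^2$-weighted cut-off terms and $|uv|\,|\nabla\eta|^2$ via $|\nabla u|^2=\tfrac12\Delta(u^2)-uv$.

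The one genuinely different step is how $\int v^2\eta^2$ is controlled. The paper invokes a ready-made identity (Lemma~2.3 of \cite{Y-W}) that packages the energy pairing directly as $\int(\Delta(u\eta))^2$ plus error terms, so the bound on $\int(\Delta(u\eta))^2$ comes for free, and then $v\eta=\Delta(u\eta)-2\nabla u\cdot\nabla\eta-u\Delta\eta$ gives $\int v^2\eta^2$ pointwise-algebraically with no absorption. You instead expand $\int v\,\Delta(u\eta^2)$ from scratch, cancel the three cross terms $\int v^2\eta^2$, $4\int\eta v\,\nabla u\cdot\nabla\eta$, $2\int uv\eta\Delta\eta$ against the expansion of $(\Delta(u\eta))^2$ (a nice observation that makes the subtraction particularly clean), and recover $\int v^2\eta^2$ by returning to the energy identity with a Cauchy--Schwarz $\varepsilon$-absorption. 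Both routes succeed; the paper's is slightly more economical since the quantity $\int(\Delta(u\eta))^2$ is retained and does double duty, whereas yours is more self-contained and avoids citing the abstract identity. Either way the constants are universal and the conclusion is identical.
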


\begin{proof}
For completeness we give the proof. We have the identity
\begin{align*}
\int_{\R^n} (\Delta^2 \xi) \xi \eta^2 dx &= \int_{\R^n}
(\Delta(\xi\eta))^2 +\int_{\R^n} (-4 (\nabla \xi\cdot\nabla\eta)^2 +
2\xi\Delta\xi|\nabla \eta|^2) dx
\\
&\qquad +\int_{\R^n} \xi^2 (2 \nabla (\Delta \eta) \cdot \nabla \eta
+ (\Delta\eta)^2)dx ,
\end{align*}
for $\xi\in C^4(\R^n)$ and $\eta\in C_0^\infty(\R^n)$, see for
example Lemma 2.3 in \cite{Y-W}.

Taking $\xi = u$ yields
\begin{align*}
\int_{\R^n} |u|^{p+1} \eta^2 dx &= \int_{\R^n} (\Delta(u \eta))^2
+\int_{\R^n} (-4 (\nabla u\cdot\nabla\eta)^2 + 2 u v |\nabla
\eta|^2) dx
\\
&\qquad +\int_{\R^n} u^2 (2 \nabla (\Delta \eta) \cdot \nabla \eta +
(\Delta\eta)^2)dx ,
\end{align*}
Using the stability inequality with $u \eta$ yields
$$
p \int_{\R^n} |u|^{p+1} \eta^2 dx \leq \int_{\R^n} (\Delta(u
\eta))^2.
$$
Therefore
\begin{align*}
\int_{\R^n} (|u|^{p+1}\eta^2 +(\Delta (u\eta) )^2 ) dx &\leq C
\int_{\R^n} (|\nabla u|^2 |\nabla\eta|^2  + |u v| |\nabla \eta|^2)
dx
\\
&\qquad + C\int_{\R^n} u^2 (| \nabla (\Delta \eta) \cdot \nabla
\eta| + (\Delta\eta)^2)dx .
\end{align*}
Using $\Delta (\eta u) =  v \eta + 2\nabla \eta\cdot\nabla u + \eta
\Delta u$ we obtain
\begin{align*}
\int_{\R^n} (|u|^{p+1} + v^2 )\eta^2 dx &\leq C \int_{\R^n} (|\nabla
u|^2 |\nabla\eta|^2  + |u v| |\nabla \eta|^2) dx
\\
&\qquad + C\int_{\R^n} u^2 (| \nabla (\Delta \eta) \cdot \nabla
\eta| + (\Delta\eta)^2)dx .
\end{align*}
But
\begin{align*}
2\int_{\R^n} |\nabla u|^2 |\nabla \eta|^2dx & = \int_{\R^n}
\Delta(u^2) |\nabla \eta|^2dx - 2 \int_{\R^n} u v |\nabla \eta|^2dx
\\
& = \int_{\R^n} u^2 \Delta( |\nabla \eta|^2)dx - 2 \int_{\R^n} u v
|\nabla \eta|^2dx ,
\end{align*}
and hence
\begin{align*}
\int_{\R^n} (|u|^{p+1} + v^2 )\eta^2 dx &\leq C\int_{\R^n} u^2 (|
\nabla (\Delta \eta) \cdot \nabla \eta| + (\Delta\eta)^2 +| \Delta(
|\nabla \eta|^2) |   )  dx
\\
&\qquad + C \int_{\R^n} |u v| |\nabla \eta|^2) dx .
\end{align*}
This proves \eqref{7a}
\end{proof}
\begin{coro}\label{coro 4.3}
There exists a constant $C$ such that
\begin{equation}\label{new estimate of p+1}
\int_{B_R(x)}v^2+|u|^{p+1}\leq CR^{-4}\int_{B_{2R}(x)\setminus
B_R(x)}u^2+CR^{-2}\int_{B_{2R}(x)\setminus B_R(x)}|uv|,
\end{equation}
and
\begin{equation}\label{estimate of p+1}
\int_{B_R(x)}v^2+|u|^{p+1}\leq CR^{n-4\frac{p+1}{p-1}}.
\end{equation}
for all $B_R(x)$.
\end{coro}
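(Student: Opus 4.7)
The plan is to derive \eqref{new estimate of p+1} by plugging a carefully chosen cutoff into Lemma \ref{lem 4.2}, and then to deduce \eqref{estimate of p+1} from \eqref{new estimate of p+1} by Hölder's inequality on the annulus followed by Young's inequality and a dyadic iteration. For \eqref{new estimate of p+1}, I would take $\eta\in C_0^\infty(B_{2R}(x))$ with $\eta\equiv 1$ on $B_R(x)$ and $|\nabla^k\eta|\leq C_k R^{-k}$ for $k=1,2,3$. Because $\eta$ is constant on $B_R(x)$, the expressions $\nabla(\Delta\eta)\cdot\nabla\eta$, $(\Delta\eta)^2$, and $\Delta(|\nabla\eta|^2)$ are all supported in the annulus $B_{2R}(x)\setminus B_R(x)$ and bounded there by $CR^{-4}$, while $|\nabla\eta|^2$ is bounded by $CR^{-2}$. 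Substituting into \eqref{7a} yields \eqref{new estimate of p+1} at once.

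For \eqref{estimate of p+1}, set $I(R):=\int_{B_R(x)}(v^2+|u|^{p+1})$ and $s:=n-4(p+1)/(p-1)$, which is positive since $p>(n+4)/(n-4)$. Hölder's inequality on the annulus $B_{2R}\setminus B_R$ gives
\[\int u^2\leq CR^{n(p-1)/(p+1)}\Bigl(\int|u|^{p+1}\Bigr)^{\!2/(p+1)},\quad \int|uv|\leq CR^{n(p-1)/(2(p+1))}\Bigl(\int|u|^{p+1}\Bigr)^{\!1/(p+1)}\Bigl(\int v^2\Bigr)^{\!1/2},\]
and bounding the annulus integrals by $I(2R)$ gives, after checking that the $R$-exponents telescope (forced by the scaling invariance $u\mapsto\lambda^{4/(p-1)}u(\lambda\cdot)$),
\[I(R)\leq CR^{s(1-\alpha)}I(2R)^\alpha+CR^{s(1-\beta)}I(2R)^\beta,\qquad \alpha=\tfrac{2}{p+1},\ \beta=\tfrac{p+3}{2(p+1)},\]
with $\alpha,\beta\in(0,1)$. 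Young's inequality with conjugate exponents $1/\alpha$ and $1/(1-\alpha)$ (respectively $1/\beta$ and $1/(1-\beta)$) absorbs each term as $\varepsilon\,I(2R)+C_\varepsilon R^s$, yielding, for any $\varepsilon>0$,
\[I(R)\leq\varepsilon\,I(2R)+C_\varepsilon R^s.\]

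To finish, choose $\varepsilon$ so small that $\theta:=\varepsilon\cdot 2^s<1$, and iterate $k$ times:
\[I(R)\leq\varepsilon^k I(2^kR)+\frac{C_\varepsilon R^s}{1-\theta}.\]
The main technical point is to force the tail $\varepsilon^k I(2^kR)$ to vanish as $k\to\infty$. Since $u$ is smooth, $I(\rho)$ is finite at every scale; the one-step bound $I(\rho)\leq\varepsilon I(2\rho)+C_\varepsilon\rho^s$ then limits the growth of $I$ to a polynomial of some fixed degree $N$ depending on $u$ (super-polynomial growth is incompatible with the iterated inequality), and taking $\varepsilon<2^{-N-1}$ makes the remainder vanish and proves \eqref{estimate of p+1}. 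Apart from this closing step, the proof is routine Hölder/Young bookkeeping, with the matching of powers of $R$ on both sides dictated by the critical scaling of equation \eqref{equation}.
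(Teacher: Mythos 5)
Your derivation of \eqref{new estimate of p+1} is correct and is exactly what the paper does: plug a standard cutoff into \eqref{7a} and observe that all the coefficient terms are supported in the annulus with the right scaling.

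For \eqref{estimate of p+1}, your dyadic iteration scheme $I(R)\leq\varepsilon I(2R)+C_\varepsilon R^s$ has a genuine gap in the final step, and the gap is precisely what you flag as "the main technical point." The one-step bound $I(\rho)\leq\varepsilon I(2\rho)+C_\varepsilon\rho^s$ does \emph{not} limit the growth of $I$ to a polynomial. It is an inequality in the wrong direction for controlling growth: it bounds $I$ at a smaller scale by $I$ at a larger scale. Consider for instance $I(\rho)=e^\rho\cdot\rho^n$ (the $\rho^n$ factor makes it vanish at the origin as the true $I$ does). For fixed $\varepsilon>0$ and large $\rho$, $\varepsilon I(2\rho)=\varepsilon e^{2\rho}(2\rho)^n$ dwarfs $I(\rho)=e^\rho\rho^n$, so the one-step bound holds perfectly well while $I$ grows super-polynomially. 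In that scenario the tail $\varepsilon^k I(2^kR)$ does not go to zero for any fixed $\varepsilon$, no matter how small. So "super-polynomial growth is incompatible with the iterated inequality" is false, and without an independent a priori bound like $I(R)\leq C(u)R^N$ (which is not available here), the iteration cannot close.

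The paper avoids this entirely by remaining at a single scale: it replaces $\eta$ by $\eta^m$ in \eqref{7a} with $m$ large, applies Cauchy--Schwarz to absorb $\int|uv||\nabla\eta^m|^2$ into $\frac12\int v^2\eta^{2m}$ on the left, and is left with only a $u^2$ term on the right. After H\"older, the quantity $\int|u|^{p+1}\eta^{(m-2)(p+1)}$ appearing on the right is dominated by $\int(v^2+|u|^{p+1})\eta^{2m}$ on the left provided $m\ge 2(p+1)/(p-1)$, because $0\le\eta\le1$ makes $\eta^{2m}\ge\eta^{(m-2)(p+1)}$. This produces a \emph{closed} inequality $Y\le CR^{-4}Y^{2/(p+1)}R^{n(p-1)/(p+1)}$ for $Y:=\int(v^2+|u|^{p+1})\eta^{2m}$, which immediately gives $Y\le CR^{n-4(p+1)/(p-1)}$ with a constant depending only on $n$ and $p$ — no iteration, no tail term, no growth hypothesis on $u$. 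The power-of-cutoff trick is exactly what compensates for the fact that in \eqref{new estimate of p+1} the left-hand integral is over $B_R$ and the right-hand ones over the disjoint annulus: it is the only way to make both sides share a common weight and hence a common quantity to absorb.

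To repair your argument you would need either to adopt the $\eta^m$ trick, or to supply an independent a priori polynomial growth bound on $\int_{B_R}(v^2+|u|^{p+1})$, which the hypotheses do not provide.
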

\begin{proof}
The first inequality is a direct consequence of \eqref{7a}, by
choosing a cut-off function $\eta\in C_0^\infty(B_{2R}(x))$, such
that $\eta\equiv 1$ in $B_R(x)$, and for  ${k\leq 3}$,
$|\nabla^k\eta|\leq\frac{1000}{R^{k}}$.

 Exactly the same argument as in
\cite{Y-W} or \cite{HHY} provides the second estimate.  For
completeness, we record the proof here. Replacing $\eta$ in
\eqref{7a} by $\eta^m$, where $m$ is a larger integer and $\eta$ is
a cut-off function as before. Then
\begin{eqnarray*}
\int|uv||\nabla\eta^m|^2&=&m^2\int_{B_{2R}(x)\setminus
B_R(x)}|uv|\eta^{2m-2}|\nabla\eta|^2\\
&\leq&\frac{1}{2C}\int v^2\eta^{2m}+C\int
u^2\eta^{2m-4}|\nabla\eta|^4.
\end{eqnarray*}
Substituting this into \eqref{7a}, we obtain
\begin{eqnarray*}
\int (v^2+|u|^{p+1})\eta^{2m}&\leq&CR^{-4}\int_{B_{2R}(x)}u^2\eta^{2m-4}\\
&\leq&CR^{-4}\left(\int_{B_{2R}(x)}|u|^{p+1}\eta^{(m-2)(p+1)}\right)^{\frac{2}{p+1}}R^{n(1-\frac{2}{p+1})}\\
&\leq&CR^{-4}\left(\int_{B_{2R}(x)}|u|^{p+1}\eta^{(m-2)(p+1)}\right)^{\frac{2}{p+1}}R^{n(1-\frac{2}{p+1})}.
\end{eqnarray*}
This gives \eqref{estimate of p+1}. Here we have used the fact
$\eta^{2m}\geq\eta^{(m-2)(p+1)}$ because $0\leq\eta\leq1$ and $m$ is
large.
\end{proof}
\begin{proof}[Proof of Theorem \ref{thm Liouville for stable} for $1<p\leq\frac{n+4}{n-4}$]
For $p<\frac{n+4}{n-4}$, we can let $R\to+\infty$ in \eqref{estimate
of p+1} to get $u\equiv 0$ directly. If $p=\frac{n+4}{n-4}$, this
gives
\[\int_{\R^n}v^2+|u|^{p+1}<+\infty.\]
So
\[\lim\limits_{R\to+\infty}\int_{B_{2R}(x)\setminus B_R(x)}v^2+|u|^{p+1}=0.\]
Then by \eqref{new estimate of p+1}, and noting that now
$n=4\frac{p+1}{p-1}$,
\begin{eqnarray*}
&&\int_{B_R(x)}v^2+|u|^{p+1}\leq CR^{-4}\int_{B_{2R}(x)\setminus
B_R(x)}u^2+C\int_{B_{2R}(x)\setminus B_R(x)}|v|^2\\
&\leq&CR^{-4}\left(\int_{B_{2R}(x)\setminus
B_R(x)}|u|^{p+1}\right)^{\frac{2}{p+1}}R^{n(1-\frac{2}{p+1})}+C\int_{B_{2R}(x)\setminus B_R(x)}|v|^2\\
&\leq&C\left(\int_{B_{2R}(x)\setminus
B_R(x)}|u|^{p+1}\right)^{\frac{2}{p+1}}+C\int_{B_{2R}(x)\setminus
B_R(x)}|v|^2.
\end{eqnarray*}
This goes to $0$ as $R\to+\infty$, and we still get $u\equiv 0$.
\end{proof}
Next we concentrate on the case $p>\frac{n+4}{n-4}$. We first use
\eqref{estimate of p+1} to show
\begin{lem}\label{lem upper bound on E}
$\lim\limits_{r\to+\infty}E(r;0,u)<+\infty.$
\end{lem}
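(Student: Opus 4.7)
The plan is to combine the monotonicity of $E$ (Theorem~\ref{thm monotonicity}) with the integral bound \eqref{estimate of p+1} to establish a uniform upper bound $E(r_0;0,u)\le C$ for all $r_0>0$. Since $E$ is nondecreasing in $r$, such a bound immediately yields the conclusion.

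First I would exploit monotonicity to write, for any $r_0>0$,
$$
r_0\, E(r_0;0,u) \;\le\; \int_{r_0}^{2r_0} E(r;0,u)\,dr \;\le\; \int_0^{\infty} \chi(r)\, E(r;0,u)\,dr,
$$
where $\chi\in C_c^{\infty}((r_0/2,4r_0))$ is a smooth cutoff with $\chi\equiv 1$ on $[r_0,2r_0]$, $|\chi'|\le C/r_0$, and $|\chi''|\le C/r_0^{2}$. Integrating against $\chi$ rather than the characteristic function has the advantage that each $\frac{d}{dr}$-term appearing inside $E(r;0,u)$ can be integrated by parts in $r$ with no boundary contribution, transferring the derivative onto $\chi$. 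Fubini's theorem then converts every surface integral $\int_{\partial B_r}\cdot$ into a volume integral over the annulus $A:=B_{4r_0}\setminus B_{r_0/2}$, weighted by $\chi(|x|)$ or $\chi'(|x|)$ times a power of $|x|$.

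Next I would estimate each of the six terms in $\int \chi(r)\, E(r;0,u)\,dr$. The bulk term is handled directly by \eqref{estimate of p+1}: its integrand is uniformly bounded, so its integral over the support of $\chi$ is $\le Cr_0$. Each boundary term, and each $\frac{d}{dr}$-of-boundary term, becomes after integration by parts and Fubini a weighted volume integral of the form $\int_A|x|^{a}u^{2}$, $\int_A|x|^{a}|\nabla u|^{2}$, or $\int_A|x|^{a}\bigl(\tfrac{4}{p-1}|x|^{-1}u+\partial_r u\bigr)^{2}$, where each factor $\chi'$ that appears contributes an additional $r_0^{-1}$. To bound these I need two auxiliary estimates. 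First, H\"older's inequality combined with \eqref{estimate of p+1} yields
$$
\int_A u^{2} \;\le\; |A|^{(p-1)/(p+1)} \Bigl(\int_A |u|^{p+1}\Bigr)^{2/(p+1)} \;\le\; C r_0^{\,n-8/(p-1)}.
$$
Second, a standard Caccioppoli-type argument starting from $\Delta(u^{2}) = 2|\nabla u|^{2}+2uv$, tested against $\phi^{2}$ for a cutoff $\phi$ equal to $1$ on $A$ and supported in a slightly larger annulus, followed by Cauchy--Schwarz and the bound $\int_A v^{2}\le Cr_0^{\,n-4(p+1)/(p-1)}$ from \eqref{estimate of p+1}, gives
$$
\int_A |\nabla u|^{2} \;\le\; C r_0^{\,n-2-8/(p-1)}.
$$

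A routine book-keeping of the powers of $r_0$ then shows that in each of the six terms the exponents arising from the explicit prefactor, the localization factor $\chi$ or $\chi'$, and the above integrability bounds conspire to give exactly $Cr_0$. Summing yields $\int \chi(r)\, E(r;0,u)\,dr \le Cr_0$ and hence $E(r_0;0,u)\le C$ uniformly in $r_0$. By monotonicity, $\lim_{r\to+\infty}E(r;0,u)$ therefore exists and is finite. The main obstacle is not conceptual but arithmetic: one must track exponents across all six terms simultaneously and verify the numerology balances. The Caccioppoli step is essential here because $|\nabla u|^{2}$ is not directly controlled by \eqref{estimate of p+1}; it is the interplay between the two bounds on $u^{2}$ and $|\nabla u|^{2}$ that produces the sharp power $r_0$.
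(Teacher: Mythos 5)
Your proposal is correct and follows essentially the same route as the paper: monotonicity of $E$ reduces the bound to an annular average, Fubini converts the surface integrals into volume integrals over $A$, and H\"older with \eqref{estimate of p+1} together with a Caccioppoli bound on $\int_A|\nabla u|^2$ control every term by $Cr_0$; the only cosmetic difference is your single average against a smooth cutoff $\chi$ (so the $\tfrac{d}{dr}$-terms are integrated by parts onto $\chi$), where the paper uses a double average against characteristic functions and evaluates the $\tfrac{d}{d\lambda}$-terms by the fundamental theorem of calculus. One small point you leave implicit: the inequality $\int_{r_0}^{2r_0}E\,dr\le\int_0^\infty\chi\,E\,dr$ uses $E\ge 0$ on the remaining part of the support of $\chi$, which holds because $u$ is smooth at $0$ so $\lim_{r\to 0^+}E(r;0,u)=0$ and $E$ is nondecreasing.
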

\begin{proof}
Since $E(r)$ is non-decreasing in $r$, we have
\[E(r)\leq\frac{1}{r}\int_r^{2r}E(t)dt
\leq\frac{1}{r^2}\int_r^{2r}\int_t^{t+r}E(\lambda)d\lambda dt.\]
 By \eqref{estimate of p+1},
\[\frac{1}{r^2}\int_r^{2r}\int_t^{t+r}\left(\lambda^{4\frac{p+1}{p-1}-n}
\int_{B_\lambda}\frac{1}{2}(\Delta
u)^2-\frac{1}{p+1}|u|^{p+1}\right)d\lambda dt\leq C.\]

 Next
\begin{eqnarray*}
&&\frac{1}{r^2}\int_r^{2r}\int_t^{t+r}\left(\lambda^{\frac{8}{p-1}+1-n}\int_{\partial
B_\lambda}u^2\right)d\lambda
dt\\
&=&\frac{1}{r^2}\int_r^{2r}\int_{B_{t+r}\setminus
B_t}|x|^{\frac{8}{p-1}+1-n}u(x)^2dxdt\\
&\leq&\frac{1}{r^2}\int_r^{2r} \left(\int_{B_{3r\setminus
B_r}}|x|^{(\frac{8}{p-1}+1-n)\frac{p+1}{p-1}}\right)^{\frac{p-1}{p+1}}
\left(\int_{B_{3r}}|u(x)|^{p+1}\right)^{\frac{2}{p+1}}dt\\
&\leq&C.
\end{eqnarray*}
The same estimate holds for the term in $E(r)$ containing
\[\int_{\partial B_\lambda}\left(|\nabla
u|^2-|\frac{\partial u}{\partial r}|^2\right).\] For this we need to
note the following estimate
\[\int_{B_r}|\nabla u|^2\leq Cr^{2}\int_{B_{2r}}(\Delta u)^2+Cr^{-2+n\frac{p-1}{p+1}}
\left(\int_{B_{2r}}|u|^{p+1}\right)^{\frac{2}{p+1}}\leq
Cr^{n-\frac{8}{p-1}-2}.\]

Now consider
\begin{eqnarray*}
&&\frac{1}{r^2}\int_r^{2r}\int_t^{t+r}\frac{\lambda^3}{2}\frac{d}{d\lambda}\left[\lambda^{\frac{8}{p-1}+1-n}\int_{\partial
B_\lambda}\left(\frac{4}{p-1}\lambda^{-1}u+\frac{\partial
u}{\partial r}\right)^2\right]d\lambda
dt\\
&=&\frac{1}{2r^2}\int_r^{2r}\{(t+r)^{\frac{8}{p-1}+4-n}\int_{\partial
B_{t+r}}\left(\frac{4}{p-1}(t+r)^{-1}u+\frac{\partial u}{\partial
r}\right)^2\\
&&\ \ \ \ \ \ \ \ \ \ \ \ -t^{\frac{8}{p-1}+4-n}\int_{\partial
B_{t}}\left(\frac{4}{p-1}t^{-1}u+\frac{\partial u}{\partial
r}\right)^2\}dt\\
&&-\frac{3}{2r^2}\int_r^{2r}\int_t^{t+r}\lambda^{\frac{8}{p-1}+3-n}\int_{\partial
B_\lambda}\left(\frac{4}{p-1}\lambda^{-1}u+\frac{\partial
u}{\partial r}\right)^2d\lambda dt\\
&\leq&\frac{C}{r^2}\int_{B_{3r}\setminus
B_r}|x|^{\frac{8}{p-1}+4-n}\left(\frac{4}{p-1}|x|^{-1}u+\frac{\partial
u}{\partial r}\right)^2\\
&\leq&C.
\end{eqnarray*}
The remaining terms in $E(r)$ can be treated similarly.
\end{proof}

For any $\lambda>0$, define
\[u^\lambda(x):=\lambda^{\frac{4}{p-1}}u(\lambda x),\ \ \ \ \
v^\lambda(x):=\lambda^{\frac{4}{p-1}+2}v(\lambda x).\] $u^\lambda$
is also a smooth stable solution of \eqref{equation} on
$\mathbb{R}^n$.

By rescaling \eqref{estimate of p+1}, for all $\lambda>0$ and balls
$B_r(x)\subset\mathbb{R}^n$,
\[\int_{B_r(x)}(v^\lambda)^2+|u^\lambda|^{p+1}\leq Cr^{n-4\frac{p+1}{p-1}}.\]
In particular, $u^\lambda$ are uniformly bounded in
$L^{p+1}_{loc}(\mathbb{R}^n)$. By elliptic estimates, $u^\lambda$
are also uniformly bounded in $W^{2,2}_{loc}(\mathbb{R}^n)$. Hence,
up to a subsequence of $\lambda\to+\infty$, we can assume that
$u^\lambda\to u^\infty$ weakly in $W^{2,2}_{loc}(\mathbb{R}^n)\cap
L^{p+1}_{loc}(\mathbb{R}^n)$. By compactness embedding for Sobolev
functions, $u^\lambda\to u^\infty$ strongly in
$W^{1,2}_{loc}(\mathbb{R}^n)$. Then for any ball $B_R(0)$, by
interpolation between $L^q$ spaces and noting \eqref{estimate of
p+1}, for any $q\in[1,p+1)$, as $\lambda\to+\infty$,
\begin{equation}\label{convergence in q<p+1}
\|u^\lambda-u^\infty\|_{L^{q}(B_R(0))}\leq \|u^\lambda-
u^\infty\|_{L^1(B_R(0))}^t\|u^\lambda-
u^\infty\|_{L^{p+1}(B_R(0))}^{1-t}\to 0,
\end{equation}
where $t\in(0,1)$ satisfies $\frac{1}{q}=t+\frac{1-t}{p+1}$. That
is, $u^\lambda\to u^\infty$ in $L^{q}_{loc}(\mathbb{R}^n)$ for any
$q\in [1,p+1)$.

 For any function $\varphi\in
C_0^\infty(\mathbb{R}^n)$,
\[\int_{\mathbb{R}^n}\Delta u^\infty\Delta\varphi-(u^\infty)^p\varphi
=\lim\limits_{\lambda\to+\infty}\int_{\mathbb{R}^n}\Delta
u^\lambda\Delta\varphi-(u^\lambda)^p\varphi=0.\]
\[\int_{\mathbb{R}^n}(\Delta\varphi)^2-p(u^\infty)^{p-1}\varphi^2
=\lim\limits_{\lambda\to+\infty}\int_{\mathbb{R}^n}(\Delta\varphi)^2-p(u^\lambda)^{p-1}\varphi^2\geq0.\]
 Thus $u^\infty\in W^{2,2}_{loc}(\mathbb{R}^n)\cap L^{p+1}_{loc}(\mathbb{R}^n)$ is a stable
solution of \eqref{equation} in $\mathbb{R}^n$.

\begin{lem}
$u^\infty$ is homogeneous.
\end{lem}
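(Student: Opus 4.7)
The plan is to combine the monotonicity formula (Theorem~\ref{thm monotonicity} and Corollary~\ref{coro characterization of homogeneous solutions}) with the blow-down scaling invariance to force $E(r;0,u^{\infty})$ to be constant in $r$, and then to invoke the characterization of homogeneous solutions in Corollary~\ref{coro characterization of homogeneous solutions}.

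First, I would use Lemma~\ref{lem upper bound on E} together with the monotonicity of $E(r;0,u)$ to conclude that $E_{\infty}:=\lim_{r\to\infty}E(r;0,u)$ exists and is finite. The scaling invariance of the equation yields the identity $E(r;0,u^{\lambda})=E(\lambda r;0,u)$, so for every fixed $r>0$ one has $E(r;0,u^{\lambda})\to E_{\infty}$ as $\lambda\to\infty$. Consequently, for any $0<\rho<R<\infty$,
\begin{equation*}
E(R;0,u^{\lambda})-E(\rho;0,u^{\lambda})\ \longrightarrow\ 0\quad\text{as }\lambda\to\infty.
\end{equation*}
Applying the differential inequality of Corollary~\ref{coro characterization of homogeneous solutions} to $u^{\lambda}$ and integrating over $[\rho,R]$, this implies
\begin{equation*}
\int_{\rho}^{R}r^{-n+2+\frac{8}{p-1}}\int_{\partial B_{r}}\left(\frac{4}{p-1}r^{-1}u^{\lambda}+\frac{\partial u^{\lambda}}{\partial r}\right)^{2}dS\,dr\ \longrightarrow\ 0.
\end{equation*}

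Next, I would pass to the limit $\lambda\to\infty$ in the above spherical integral. Since $u^{\lambda}\to u^{\infty}$ strongly in $W^{1,2}_{loc}(\mathbb{R}^{n})$, we have $\|u^{\lambda}-u^{\infty}\|_{W^{1,2}(B_{R}\setminus B_{\rho})}\to 0$. By Fubini's theorem,
\begin{equation*}
\int_{\rho}^{R}\int_{\partial B_{r}}\left(|u^{\lambda}-u^{\infty}|^{2}+|\nabla(u^{\lambda}-u^{\infty})|^{2}\right)dS\,dr\ \longrightarrow\ 0,
\end{equation*}
so passing to a subsequence (still labeled $\lambda$) the inner trace-integral tends to $0$ for almost every $r\in(\rho,R)$. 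For such radii, $\partial_{r}u^{\lambda}|_{\partial B_{r}}\to\partial_{r}u^{\infty}|_{\partial B_{r}}$ and $u^{\lambda}|_{\partial B_{r}}\to u^{\infty}|_{\partial B_{r}}$ in $L^{2}(\partial B_{r})$, and Fatou's lemma yields
\begin{equation*}
\int_{\rho}^{R}r^{-n+2+\frac{8}{p-1}}\int_{\partial B_{r}}\left(\frac{4}{p-1}r^{-1}u^{\infty}+\frac{\partial u^{\infty}}{\partial r}\right)^{2}dS\,dr\ =\ 0.
\end{equation*}

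Since $\rho$ and $R$ are arbitrary, the nonnegative integrand vanishes almost everywhere on $\mathbb{R}^{n}\setminus\{0\}$, so
\begin{equation*}
\frac{\partial u^{\infty}}{\partial r}+\frac{4}{(p-1)r}u^{\infty}=0\qquad\text{a.e.\ in }\mathbb{R}^{n}\setminus\{0\}.
\end{equation*}
Equivalently, $\partial_{r}\bigl(r^{4/(p-1)}u^{\infty}(r,\theta)\bigr)=0$ along almost every ray, which integrates to $u^{\infty}(x)=|x|^{-4/(p-1)}u^{\infty}(x/|x|)$, i.e., $u^{\infty}$ is homogeneous of degree $-4/(p-1)$.

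The main technical point is the trace-convergence step: one must convert $W^{1,2}_{loc}$ strong convergence in the interior into convergence of both $u^{\lambda}$ and $\partial_{r}u^{\lambda}$ on spheres $\partial B_{r}$ for almost every $r$. This is delivered cleanly by the Fubini argument above, which avoids any appeal to higher regularity or $L^{p+1}$ compactness (the latter is unavailable because strong convergence holds only in $L^{q}_{loc}$ for $q<p+1$). All the other ingredients---scaling invariance of $E$, monotonicity, and the sign of the differential inequality in Corollary~\ref{coro characterization of homogeneous solutions}---then combine directly to produce the ODE characterizing homogeneity.
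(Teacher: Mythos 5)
Your proof is correct and follows essentially the same route as the paper: monotonicity plus the uniform upper bound give $E(\lambda R;0,u)-E(\lambda r;0,u)\to 0$, scaling invariance transfers this to $u^\lambda$, the differential inequality of Corollary~\ref{coro characterization of homogeneous solutions} forces $\frac{4}{p-1}|x|^{-1}u^\infty+\partial_r u^\infty=0$ a.e., and integrating along rays yields homogeneity. The only cosmetic difference is the final limit passage, where the paper uses weak lower semicontinuity of the convex functional (needing only weak $W^{1,2}_{loc}$ convergence of $u^\lambda$), while you use the available strong $W^{1,2}_{loc}$ convergence together with a Fubini/Fatou slicing argument over spheres; both are valid.
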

\begin{proof}
For any $0<r<R<+\infty$, by the monotonicity of $E(r;0,u)$ and Lemma
\ref{lem upper bound on E},
\[
\lim\limits_{\lambda\to+\infty}E(\lambda R;0,u)-E(\lambda
r;0,u)=0.\] Then by the scaling invariance of $E$ and applying
Corollary \ref{coro characterization of homogeneous solutions}, we
see
\begin{eqnarray*}
0
&=&\lim\limits_{\lambda\to+\infty}E(R;0,u^\lambda)-E(r;0,u^\lambda)\\
&\geq&c(n,p)\lim\limits_{\lambda\to+\infty}\int_{B_R\setminus
B_r}\frac{\left(\frac{4}{p-1}|x|^{-1}u^\lambda(x) +\frac{\partial
u^\lambda}{\partial r}(x)\right)^2}{|x|^{n-2-\frac{8}{p-1}}}dx\\
&\geq&c(n,p)\int_{B_R\setminus
B_r}\frac{\left(\frac{4}{p-1}|x|^{-1}u^\infty(x) +\frac{\partial
u^\infty}{\partial r}(x)\right)^2}{|x|^{n-2-\frac{8}{p-1}}}dx.
\end{eqnarray*}
Note that in the last inequality we only used the weak convergence
of $u^\lambda$ to $u^\infty$ in $W^{1,2}_{loc}(\mathbb{R}^n)$. Now
\[\frac{4}{p-1}r^{-1}u^\infty
+\frac{\partial u^\infty}{\partial
r}=0,a.e.~~\text{in}~~\mathbb{R}^n.\] Integrating in $r$ shows that
\[u^\infty(x)=|x|^{-\frac{4}{p-1}}u^\infty(\frac{x}{|x|}).
\]
That is, $u^\infty$ is homogeneous.
\end{proof}
By Theorem \ref{thm homogeneous solution}, $u^\infty\equiv 0$. Since
this holds for the limit of any sequence $\lambda\to+\infty$, by
\eqref{convergence in q<p+1} we get
\[\lim\limits_{\lambda\to+\infty}u^\lambda=0\ \ \ \text{strongly
in}\ L^2(B_4(0)).\]
 Now we show
\begin{lem}\label{lem convergence of E}
$\lim\limits_{r\to+\infty}E(r;0,u)=0$.
\end{lem}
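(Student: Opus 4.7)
The plan is to show that the limit $L:=\lim_{r\to\infty}E(r;0,u)$, which exists and is finite by Theorem~\ref{thm monotonicity} and Lemma~\ref{lem upper bound on E}, must equal zero. By the scaling invariance built into $E$, one has $E(r;0,u^\lambda)=E(\lambda r;0,u)$; hence for each fixed $r>0$, $E(r;0,u^\lambda)\to L$ as $\lambda\to\infty$, and monotonicity gives a uniform bound for $r$ in a bounded interval.

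First I would upgrade the convergence of the rescaled sequence. Applying Corollary~\ref{coro 4.3} to the stable function $u^\lambda$ on $B_R$ and using Cauchy--Schwarz yields
\[
\int_{B_R}(v^\lambda)^2+|u^\lambda|^{p+1}\leq CR^{-4}\int_{B_{2R}}(u^\lambda)^2+CR^{-2}\Bigl(\int_{B_{2R}}(u^\lambda)^2\Bigr)^{1/2}\Bigl(\int_{B_{2R}}(v^\lambda)^2\Bigr)^{1/2}.
\]
Combined with the strong $L^2_{loc}$ convergence $u^\lambda\to 0$ already established just above the lemma and the uniform bound on $\int_{B_{2R}}(v^\lambda)^2$ coming from \eqref{estimate of p+1}, the right-hand side tends to zero. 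This gives strong $L^{p+1}_{loc}$ convergence of $u^\lambda$ to $0$ and strong $L^2_{loc}$ convergence of $\Delta u^\lambda$ to $0$, on top of the strong $W^{1,2}_{loc}$ convergence already available.

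Next I would fix a cutoff $\varphi\in C_c^\infty((1,2))$ with $\int_1^2\varphi(r)\,dr=1$ and consider
\[
I_\lambda:=\int_1^2 E(r;0,u^\lambda)\varphi(r)\,dr.
\]
On the one hand, pointwise convergence and the uniform bound give, by dominated convergence, $I_\lambda\to L$. On the other hand, I would expand $I_\lambda$ term by term from the definition of $E$. The bulk term and the two algebraic boundary terms become, via Fubini and the co-area formula, bulk integrals over the annulus $B_2\setminus B_1$ of $(v^\lambda)^2$, $|u^\lambda|^{p+1}$, $(u^\lambda)^2$ and $|\nabla u^\lambda|^2$ against smooth radial weights; these all vanish as $\lambda\to\infty$ by the convergences established above. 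The three boundary terms of the form $\frac{d}{dr}\bigl[r^{a}\int_{\partial B_r}(\cdots)\bigr]$ I would handle by integrating by parts in $r$, transferring the derivative onto $\varphi$ (and, for the term carrying the extra factor $r^3/2$, onto $r^3\varphi(r)/2$); the co-area formula then converts each into a bulk integral over $B_2\setminus B_1$ of $(u^\lambda)^2$, $|\nabla u^\lambda|^2$ or $\bigl(\tfrac{4}{p-1}|x|^{-1}u^\lambda+\partial_r u^\lambda\bigr)^2$ with smooth weights, which likewise tend to zero. Comparing the two evaluations forces $L=0$.

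The main obstacle is that an individual spherical integral such as $\int_{\partial B_r}(u^\lambda)^2$ at a fixed radius $r$ is not controlled by the strong $L^2_{loc}$ or $W^{1,2}_{loc}$ convergence of $u^\lambda$, so one cannot expect $E(r_0;0,u^\lambda)\to 0$ termwise at a single $r_0$. Smearing over $r\in(1,2)$ against $\varphi$ and, crucially, integrating the $d/dr$-boundary terms by parts to move the derivative onto $\varphi$ is precisely what reduces every contribution of $E$ to a bulk integral over the annulus, where the strong $L^{p+1}_{loc}$, $W^{1,2}_{loc}$, and (for $\Delta u^\lambda$) $L^2_{loc}$ compactness of $u^\lambda\to 0$ can be applied directly.
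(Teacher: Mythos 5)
Your proposal is correct, and it follows the same overall strategy as the paper: first upgrade the $L^2_{loc}$ convergence $u^\lambda\to0$ to $\int_{B_R}(v^\lambda)^2+|u^\lambda|^{p+1}\to0$ via Corollary~\ref{coro 4.3} and Cauchy--Schwarz, then exploit the scaling identity $E(r;0,u^\lambda)=E(\lambda r;0,u)$ and average over an annulus of radii to convert the boundary terms of $E$ into bulk integrals that vanish. The difference lies in how the averaging is done. The paper passes to a sequence $\lambda_i$, sums the annular $W^{2,2}$ energies, and uses Fubini to select a ``good'' radius $r_0$ at which $\|u^{\lambda_i}\|_{W^{2,2}(\partial B_{r_0})}\to0$, then concludes $E(r_0;0,u^{\lambda_i})\to0$. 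You instead integrate $E(r;0,u^\lambda)$ against a fixed smooth cutoff $\varphi\in C_c^\infty((1,2))$, pass $L=\lim_\lambda I_\lambda$ by monotone/dominated convergence, and integrate the $\frac{d}{dr}$ terms by parts onto $\varphi$ (or $r^3\varphi/2$) so that the co-area formula turns every contribution into a weighted bulk integral of $(u^\lambda)^2$, $|\nabla u^\lambda|^2$, or $(v^\lambda)^2$ over $B_2\setminus B_1$. Your route is a bit tighter: the $\frac{d}{dr}$ terms of $E$ a priori involve third-order traces of $u$ on a sphere, which the paper's $W^{2,2}(\partial B_{r_0})$ control does not directly bound (one would need to argue further, e.g. via $W^{3,2}$ on a good slice), whereas your smearing and integration by parts sidesteps the issue entirely by only ever using the $W^{1,2}_{loc}$ and $L^2_{loc}$ convergences you already have. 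Both proofs are valid; yours buys a cleaner handling of the distributional derivative terms at the cost of a slightly more elaborate bookkeeping in the expansion of $I_\lambda$.
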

\begin{proof}
For all $\lambda\to+\infty$,
\[\lim\limits_{\lambda\to+\infty}\int_{B_4(0)}(u^\lambda)^2=0.\]
Because $v^\lambda$ are uniformly bounded in $L^2(B_4(0))$, by the
Cauchy inequality we also have
\[\lim\limits_{\lambda\to+\infty}\int_{B_4(0)}|u^\lambda v^\lambda|\leq\lim\limits_{\lambda\to+\infty}
\left(\int_{B_4(0)}(u^\lambda)^2\right)^{\frac{1}{2}}\left(\int_{B_4(0)}(v^\lambda)^2\right)^{\frac{1}{2}}=0.\]
 By \eqref{new estimate of p+1},
\begin{eqnarray}\label{convergence of energy}
\lim\limits_{\lambda\to+\infty}\int_{B_3(0)}(v^\lambda)^2+|u^\lambda|^{p+1}
&\leq&C\lim\limits_{\lambda\to+\infty}\left(\int_{B_4(0)}(u^\lambda)^2+\int_{B_4(0)}\big|u^\lambda
v^\lambda\big|\right)\nonumber\\&=&0.
\end{eqnarray}
By the interior $L^2$ estimate, we get
\[\lim\limits_{\lambda\to+\infty}\int_{B_2(0)}\sum_{k\leq 2}|\nabla^ku^\lambda|^2=0.\]
In particular, we can choose a sequence $\lambda_i\to+\infty$ such
that
\[\int_{B_2(0)}\sum_{k\leq 2}|\nabla^ku^{\lambda_i}|^2\leq 2^{-i}.\]
By this choice we have
\[\int_1^2\sum_{i=1}^{+\infty}\int_{\partial B_r}\sum_{k\leq
2}|\nabla^ku^{\lambda_i}|^2dr\leq\sum_{i=1}^{+\infty}\int_1^2\int_{\partial
B_r}\sum_{k\leq 2}|\nabla^ku^{\lambda_i}|^2dr\leq 1.\] That is, the
function
\[f(r):=\sum_{i=1}^{+\infty}\int_{\partial B_r}\sum_{k\leq
2}|\nabla^ku^{\lambda_i}|^2\in L^1((1,2)).\] There exists an
$r_0\in(1,2)$ such that $f(r_0)<+\infty$. From this we get
\[\lim\limits_{i\to+\infty}\|u^{\lambda_i}\|_{W^{2,2}(\partial B_{r_0})}=0.\]
Combining this with \eqref{convergence of energy} and the scaling
invariance of $E(r)$, we get
\[\lim\limits_{i\to+\infty}E(\lambda_ir_0;0,u)=\lim\limits_{i\to+\infty}E(r_0;0,u^{\lambda_i})=0.\]
Since $\lambda_ir_0\to+\infty$ and $E(r;0,u)$ is non-decreasing in
$r$, we get \[\lim\limits_{\lambda\to+\infty}E(r;0,u)=0.\qedhere\]
\end{proof}
By the smoothness of $u$, $\lim\limits_{r\to 0}E(r;0,u)=0$. Then
again by the monotonicity of $E(r;0,u)$ and the previous lemma, we
obtain
\[E(r;0,u)=0~~\text{for all}~~r>0.\]
Then again by Corollary \ref{coro characterization of homogeneous
solutions}, $u$ is homogeneous, and then $u\equiv 0$ by Theorem
\ref{thm homogeneous solution} (or by the smoothness of $u$). This
finishes the proof of Theorem \ref{thm Liouville for stable}.

\section{Finite Morse Index Solutions}
\setcounter{equation}{0}

In this section we prove Theorem~\ref{thmstable}.
First, by the doubling lemma \cite{P-Q-S} and our Liouville theorem
for stable solutions Theorem \ref{thm Liouville for stable}, we have
\begin{lem}\label{finite Morse decay bound}
Let $u$ be a finite Morse index (positive or sign changing) solution of (\ref{equation}). There exists a constant $C_1$ and $R_0$ such that for all $x\in
B_{R_0}(0)^c$,
\[|u(x)|\leq C|x|^{-\frac{4}{p-1}}.\]
\end{lem}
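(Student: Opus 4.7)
The plan is to argue by contradiction using the doubling lemma of Pol\'a\v{c}ik--Quittner--Souplet together with the stable Liouville theorem (Theorem \ref{thm Liouville for stable}) applied in the stability region of $u$. Since $u$ has finite Morse index, there is a compact $\mathcal K \subset \R^n$ such that $u$ is stable on $\R^n \setminus \mathcal K$; fix $R_0$ so large that $\mathcal K \subset B_{R_0/2}(0)$.

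Suppose, for contradiction, that there is a sequence $y_k$ with $|y_k|\to\infty$ and
\[
M(y_k)\,|y_k|^{\frac{4}{p-1}} \to \infty, \qquad M(x):=|u(x)|^{\frac{p-1}{4}}.
\]
Apply the doubling lemma on $D_k := \R^n \setminus \overline{B_{R_0}(0)}$ with the function $M$: this produces new points $x_k \in D_k$ with $M(x_k)\ge M(y_k)$, $M(x_k)\,\mathrm{dist}(x_k,\partial D_k)\to\infty$, and
\[
M(z) \le 2M(x_k) \quad \text{for all } z \in B\!\left(x_k,\, k/M(x_k)\right).
\]
In particular $|x_k|\to\infty$ and $M(x_k)|x_k|\to\infty$, so $R_k := k/M(x_k)$ satisfies $R_k\,M(x_k)=k\to\infty$, while the ball $B(x_k,R_k)$ lies inside $D_k$ and, for large $k$, outside $\mathcal K$.

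Now rescale: set
\[
v_k(z) := M(x_k)^{-\frac{4}{p-1}}\, u\!\left(x_k + M(x_k)^{-1} z\right), \qquad z \in B_k(0).
\]
Each $v_k$ solves $\Delta^2 v_k = |v_k|^{p-1}v_k$ on $B_k(0)$, is \emph{stable} there (the test functions pull back to test functions supported off $\mathcal K$), satisfies $|v_k(0)|=1$ by construction, and is bounded by $2^{4/(p-1)}$ on $B_k(0)$. By standard interior $L^p$ and Schauder estimates applied to the biharmonic equation, $\{v_k\}$ is locally bounded in $C^{4,\alpha}$; up to a subsequence, $v_k \to v_\infty$ in $C^4_{\mathrm{loc}}(\R^n)$. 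The limit $v_\infty$ is a smooth stable solution on all of $\R^n$ with $|v_\infty(0)|=1$. Since $p \in (1, p_c(n))$ (recall we are in the supercritical regime of the main theorem and, for the subcritical regime, the lemma is trivial from standard Liouville results), Theorem \ref{thm Liouville for stable} forces $v_\infty \equiv 0$, contradicting $|v_\infty(0)|=1$.

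The only real obstacle is organizing the blow-up so that \emph{stability passes to the limit}: the key point is that $\mathcal K$ is compact and $|x_k|\to\infty$, so for any fixed radius $\rho$ the pulled-back region $B_\rho(0)$ sits inside the stability region of $u$ once $k$ is large; then one can test $\Lambda_{v_k}$ against any $\phi \in C^\infty_0(B_\rho)$ by pulling back, and pass to the limit in the quadratic form using the $C^4_{\mathrm{loc}}$ convergence. Everything else (doubling, rescaling invariance of the equation, elliptic regularity, $|v_\infty(0)|=1$) is routine. This is the finite Morse index analogue of the standard scheme, replacing Farina's $\varepsilon$-regularity by our monotonicity-based Liouville theorem.
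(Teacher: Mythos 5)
Your proposal matches the paper's argument essentially step for step: negate the bound, invoke the Pol\'a\v{c}ik--Quittner--Souplet doubling lemma on the domain exterior to $B_{R_0}$, rescale to produce uniformly bounded stable solutions on growing balls, extract a $C^4_{loc}$ limit by elliptic estimates, and contradict Theorem \ref{thm Liouville for stable} via $|v_\infty(0)|=1$. One slip worth fixing: the negation of $|u(x)|\le C|x|^{-4/(p-1)}$ reads $M(y_k)\,|y_k|\to\infty$ (not $M(y_k)\,|y_k|^{4/(p-1)}\to\infty$), since $M=|u|^{(p-1)/4}$; you use the correct quantity $M(x_k)|x_k|\to\infty$ later on, so this is only a typographical error in the contradiction hypothesis.
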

\begin{proof}
Assume that $u$ is stable outside $B_{R_0}$. For $x\in B_{R_0}^c$,
let $M(x)=|u(x)|^{\frac{p-1}{4}}$ and $d(x)=|x|-R_0$, the distance
to $B_{R_0}$. Assume that there exists a sequence of $x_k\in
B_{R_0}^c$ such that
\begin{equation}\label{5.1}
M(x_k)d(x_k)\geq 2k.
\end{equation}
Since $u$ is bounded on any compact set of $\mathbb{R}^n$,
$d(x_k)\to+\infty$.

By the doubling lemma \cite{P-Q-S}, there exists another sequence
$y_k\in B_{R_0}^c$, such that
\begin{enumerate}
\item $M(y_k)d(y_k)\geq 2k$;
\item $M(y_k)\geq M(x_k)$;
\item $M(z)\leq 2M(y_k)$ for any $z\in B_{R_0}^c$ such that
$|z-y_k|\leq\frac{k}{M(y_k)}$.
\end{enumerate}
Now define
\[u_k(x)=M(y_k)^{-\frac{4}{p-1}}u(y_k+M(y_k)^{-1}x),~~\text{for}~~x\in B_k(0).\]
By definition, $|u_k(0)|=1$. By (3), $|u_k|\leq2^{\frac{p-1}{4}}$ in
$B_k(0)$. By (1), $B_{k/M(y_k)}(y_k)\cap B_{R_0}=\emptyset$, which
implies that $u$ is stable in $B_{k/M(y_k)}(y_k)$. Hence $u_k$ is
stable in $B_k(0)$.

By elliptic regularity, $u_k$ are uniformly bounded in
$C^5(B_k(0))$. Up to a subsequence, $u_k$ converges to $u_\infty$ in
$C^4_{loc}(\mathbb{R}^n)$. By the above conditions on $u_k$, we have
\begin{enumerate}
\item $|u_\infty(0)|=1$;
\item $|u_\infty|\leq2^{\frac{p-1}{4}}$ in $\mathbb{R}^n$;
\item $u_\infty$ is a smooth stable solution of \eqref{equation}
in $\mathbb{R}^n$.
\end{enumerate}
By the Liouville theorem for stable solutions, Theorem \ref{thm
Liouville for stable}, $u_\infty\equiv0$. This is a contradiction,
so \eqref{5.1} does not hold.
\end{proof}
\begin{coro}\label{corococo}
There exists a constant $C_1$ and $R_0$ such that for all $x\in
B_{3R_0}(0)^c$,
\begin{equation}\label{5.2}
\sum_{k\leq 3}|x|^{\frac{4}{p-1}+k}|\nabla^k u(x)|\leq C_3.
\end{equation}
\end{coro}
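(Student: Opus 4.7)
The plan is to deduce the derivative bounds from the zeroth-order bound of Lemma \ref{finite Morse decay bound} by a standard rescaling-plus-elliptic-regularity argument, exploiting the scale invariance of the equation $\Delta^2 u = |u|^{p-1}u$.

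First, fix $R_0$ as supplied by Lemma \ref{finite Morse decay bound}, and pick any $x_0 \in \mathbb{R}^n$ with $R:=|x_0|\geq 3R_0$. Set
\[
v(y) := R^{\frac{4}{p-1}}\, u\!\left(x_0 + \tfrac{R}{4} y\right), \qquad y \in B_1(0).
\]
For $|y|\le 1$ we have $|x_0+\tfrac{R}{4}y| \in [\tfrac{3R}{4},\tfrac{5R}{4}] \subset B_{R_0}(0)^c$, so Lemma \ref{finite Morse decay bound} applies and yields the uniform bound
\[
|v(y)| \;\le\; R^{\frac{4}{p-1}}\cdot C\,\bigl|x_0+\tfrac{R}{4}y\bigr|^{-\frac{4}{p-1}} \;\le\; C',
\]
with $C'$ independent of $x_0$ and $R$.

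Next, a direct computation using $\Delta_y^k v(y) = (R/4)^{\frac{4}{p-1}+2k}\,(\Delta^k u)\!\left(x_0+\tfrac{R}{4}y\right)$ and the identity $\tfrac{4}{p-1}+4 = \tfrac{4p}{p-1}$ shows that $v$ satisfies the same equation,
\[
\Delta^2 v = |v|^{p-1} v \quad\text{in } B_1(0),
\]
on the unit ball. Since $\|v\|_{L^\infty(B_1)}\le C'$, we also get $\|\Delta^2 v\|_{L^\infty(B_1)}\le (C')^p$. Standard interior $L^p$ / Schauder estimates for the biharmonic operator therefore give
\[
\|v\|_{C^{3,\alpha}(B_{1/2}(0))} \;\le\; C''
\]
for a constant $C''=C''(n,p)$ depending only on $C'$, $n$, $p$. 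In particular, $|\nabla^k v(0)| \le C''$ for every $k\le 3$.

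Finally, translating back through the identity $\nabla^k v(0) = (R/4)^{\frac{4}{p-1}+k}\,\nabla^k u(x_0)$, we obtain
\[
|\nabla^k u(x_0)| \;\le\; C_3 \,|x_0|^{-\frac{4}{p-1}-k}\qquad\text{for }k=0,1,2,3,
\]
with $C_3$ independent of $x_0$, which is exactly \eqref{5.2}. There is no real obstacle here: the scale invariance ensures the rescaled equation is identical, the zeroth-order bound supplies a uniform $L^\infty$ estimate on each rescaled ball, and interior elliptic regularity for the biharmonic equation does the rest. The only care needed is to choose the rescaling ratio (here $R/4$) so that the rescaled ball stays in the region $|x|\ge R_0$ where Lemma \ref{finite Morse decay bound} is valid.
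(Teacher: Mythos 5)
Your proof is correct and takes exactly the paper's approach: rescale to a unit ball whose image lies in the region $|x|>R_0$ (the paper uses $\lambda=|x_0|/2$, you use $R/4$), invoke the pointwise bound from Lemma~\ref{finite Morse decay bound} to get a uniform $L^\infty$ bound on the rescaled function, apply interior elliptic regularity for the biharmonic operator, and scale back. One small slip worth flagging: with the prefactor $R^{\frac{4}{p-1}}$ and spatial dilation factor $R/4$ as written, $v$ actually solves $\Delta^2 v = 4^{-4}|v|^{p-1}v$, not $\Delta^2 v=|v|^{p-1}v$, and the chain-rule identities $\nabla^k v(0)=(R/4)^{\frac{4}{p-1}+k}\nabla^k u(x_0)$ you invoke are consistent instead with the prefactor $(R/4)^{\frac{4}{p-1}}$; either normalization works (the right-hand side stays uniformly bounded so the elliptic estimates apply unchanged), but the definition and the subsequent formulas should match.
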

\begin{proof}
For any $x_0$ with $|x_0|>3R_0$, take
$\lambda=\frac{|x_0|}{2}$ and define
\[\bar{u}(x)=\lambda^{\frac{4}{p-1}}u(x_0+\lambda x).\]
By the previous lemma, $|\bar{u}|\leq C_1$ in $B_1(0)$. Standard
elliptic estimates give
\[\sum_{k\leq 3}|\nabla^k \bar{u}(0)|\leq C_3.\]
Rescaling back we get \eqref{5.2}.
\end{proof}

\begin{rmk}By the same proof of Lemma \ref{finite Morse decay bound} and Corollary \ref{corococo}, one easily obtains the second part of Theorem \ref{thm full regularity2}.
\end{rmk}

\subsection{The subcritical case $1<p<\frac{n+4}{n-4}$}

We use the following Pohozaev identity. For its proof, see \cite{P-S
1, P-S 2}.
\begin{lem}
\begin{align}\label{Pohozaev identity}
\int_{B_R}\frac{n-4}{2}(\Delta
u)^2-\frac{n}{p+1}|u|^{p+1}=\\\nonumber
\int_{\partial B_R}\frac{R}{2}(\Delta
u)^2+\frac{R}{p+1}|u|^{p+1}+R\frac{\partial u}{\partial
r}\frac{\partial \Delta u}{\partial r}-\Delta u\frac{\partial
(x\cdot\nabla u)}{\partial r}.
\end{align}
\end{lem}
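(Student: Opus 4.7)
The plan is to apply the classical Pohozaev multiplier method: test the equation $\Delta^2 u=|u|^{p-1}u$ against the vector field multiplier $x\cdot\nabla u$, integrate over $B_R$, and then integrate by parts repeatedly, tracking boundary contributions on $\partial B_R$.

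First I would treat the right-hand side. Writing $|u|^{p-1}u\,(x\cdot\nabla u)=x\cdot\nabla\bigl(|u|^{p+1}/(p+1)\bigr)$ and applying the divergence theorem (using $\mathrm{div}\,x=n$ and $x\cdot\nu=R$ on $\partial B_R$) immediately gives
\[
\int_{B_R}|u|^{p-1}u\,(x\cdot\nabla u)=-\frac{n}{p+1}\int_{B_R}|u|^{p+1}+\frac{R}{p+1}\int_{\partial B_R}|u|^{p+1}.
\]

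Next I would handle the left-hand side by two rounds of integration by parts. Setting $w:=x\cdot\nabla u$ and writing $\Delta^{2}u=\mathrm{div}(\nabla\Delta u)$, the first integration by parts yields
\[
\int_{B_R}(\Delta^{2}u)\,w=\int_{\partial B_R}\frac{\partial\Delta u}{\partial r}\,w-\int_{B_R}\nabla\Delta u\cdot\nabla w.
\]
A second integration by parts on the volume term (using $\nabla\Delta u\cdot\nabla w=\mathrm{div}(\Delta u\,\nabla w)-\Delta u\,\Delta w$) produces
\[
-\int_{B_R}\nabla\Delta u\cdot\nabla w=-\int_{\partial B_R}\Delta u\,\frac{\partial w}{\partial r}+\int_{B_R}\Delta u\,\Delta w.
\]
At this point I would invoke the commutator identity $\Delta(x\cdot\nabla u)=x\cdot\nabla\Delta u+2\Delta u$ and observe that
\[
\int_{B_R}\Delta u\,(x\cdot\nabla\Delta u)=\tfrac{1}{2}\int_{B_R}x\cdot\nabla\bigl((\Delta u)^{2}\bigr)=-\tfrac{n}{2}\int_{B_R}(\Delta u)^{2}+\tfrac{R}{2}\int_{\partial B_R}(\Delta u)^{2}.
\]
Combining, the interior $(\Delta u)^{2}$ term collapses to the coefficient $\frac{4-n}{2}$, producing one interior term and one boundary term involving $(\Delta u)^2$.

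Finally I would use $x\cdot\nabla u=R\,\partial_{r}u$ on $\partial B_R$ in the very first boundary integral so that it takes the stated form $R\,\partial_r u\,\partial_r\Delta u$, equate the assembled LHS and RHS, and rearrange to place all interior terms on the left. The main obstacle is purely bookkeeping: keeping signs straight through the two integrations by parts and the divergence-theorem step for $x\cdot\nabla((\Delta u)^{2})$; no analytic subtleties arise, since $u$ is smooth and all quantities are classical. One should note that the identity as stated reads naturally for classical solutions, but by density it extends to the regularity class $u\in W^{4,2}\cap L^{p+1}$ used elsewhere in the paper.
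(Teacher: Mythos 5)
Your multiplier $x\cdot\nabla u$ is exactly the right one, and the two rounds of integration by parts together with the commutator identity $\Delta(x\cdot\nabla u)=x\cdot\nabla\Delta u+2\Delta u$ are the standard Pucci--Serrin route. The paper itself does not prove this lemma; it simply cites Pucci and Serrin, so your sketch is filling a gap the paper leaves intentionally.

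There is one point you should not dismiss as ``bookkeeping.'' If you carry the computation to its end and actually rearrange so that the interior terms sit on the left, you do \emph{not} reproduce the sign of the boundary $|u|^{p+1}$ term as printed. Concretely, your three displays give
\[
\int_{B_R}(\Delta^2u)(x\cdot\nabla u)
=\int_{\partial B_R}\frac{\partial\Delta u}{\partial r}(x\cdot\nabla u)
-\int_{\partial B_R}\Delta u\,\frac{\partial (x\cdot\nabla u)}{\partial r}
+\frac{4-n}{2}\int_{B_R}(\Delta u)^2
+\frac{R}{2}\int_{\partial B_R}(\Delta u)^2 ,
\]
while the nonlinearity side is $\frac{R}{p+1}\int_{\partial B_R}|u|^{p+1}-\frac{n}{p+1}\int_{B_R}|u|^{p+1}$. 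Equating, moving the interior terms to the left, and multiplying by $-1$ yields
\[
\int_{B_R}\frac{n-4}{2}(\Delta u)^2-\frac{n}{p+1}|u|^{p+1}
=\int_{\partial B_R}\frac{R}{2}(\Delta u)^2-\frac{R}{p+1}|u|^{p+1}
+R\frac{\partial u}{\partial r}\frac{\partial\Delta u}{\partial r}
-\Delta u\,\frac{\partial (x\cdot\nabla u)}{\partial r},
\]
i.e.\ the boundary $|u|^{p+1}$ term carries a minus sign, not the plus sign in the displayed lemma. This appears to be a sign misprint in the paper; it is harmless in context because the argument that follows only uses that the entire boundary integrand tends to $0$ as $R\to\infty$, which is indifferent to the sign. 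But since your proof would actually produce the correct sign, you should state it explicitly rather than claiming to recover the printed formula verbatim. Otherwise the structure and each individual step of your argument are correct.
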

By taking $R\to+\infty$ and using \eqref{5.2}, and noting that
$p<\frac{n+4}{n-4}$, we see that
\[\int_{\partial B_R}\frac{R}{2}(\Delta
u)^2+\frac{R}{p+1}|u|^{p+1}+R\frac{\partial u}{\partial
r}\frac{\partial \Delta u}{\partial r}-\Delta u\frac{\partial
(x\cdot\nabla u)}{\partial r}\to 0.\]
 By \eqref{5.2}, we also have
\[(\Delta u)^2+|u|^{p+1}\leq C(1+|x|)^{-4\frac{p+1}{p-1}}.\]
Since $p<\frac{n+4}{n-4}$, $4\frac{p+1}{p-1}>n$. Hence
\[
\int_{\mathbb{R}^n}(\Delta u)^2+|u|^{p+1}<+\infty.
\]
 Taking limit in \eqref{Pohozaev identity}, we get
\begin{equation}\label{5.3}
\int_{\R^n}\frac{n-4}{2}(\Delta u)^2-\frac{n}{p+1}|u|^{p+1}=0.
\end{equation}
Take an $\eta\in C_0^\infty(B_{2})$, $\eta\equiv 1$ in $B_1$ and
$\sum_{k\leq 2}|\nabla^k\eta|\leq 1000$, and denote
$\eta_R(x)=\eta(x/R)$. By testing the equation \eqref{equation} with
$u(x)\eta_R^2$, we get
\[\int_{\R^n}(\Delta u)^2\eta_R^2-|u|^{p+1}\eta_R^2=-\int_{\R^n}2\nabla u\nabla\eta_R^2+u\Delta\eta_R^2.\]
By the same reasoning as above, we get
\[\int_{\R^n}(\Delta u)^2-|u|^{p+1}=0.\]
Substituting \eqref{5.3} into this, we get
\[\left(\frac{n-4}{2}-\frac{n}{p+1}\right)\int_{\R^n}|u|^{p+1}=0.\]
Since $\frac{n-4}{2}-\frac{n}{p+1}<0$, $u\equiv 0$.

\subsection{The critical case}

Since $u$ is stable outside $B_{R_0}$, Lemma \ref{lem 4.2} still
holds if the support of $\eta$ is outside $B_{R_0}$. Take
$\varphi\in C_0^\infty(B_{2R}\setminus B_{2R_0})$, such that
$\varphi\equiv 1$ in $B_{R}\setminus B_{3R_0}$ and $\sum_{k\leq
3}|x|^k|\nabla^k\varphi|\leq 100$. Then by choosing
$\eta=\varphi^m$, where $m$ is large, in \eqref{7a}, and by the same
reasoning to derive \eqref{estimate of p+1}, we get
\[\int_{B_R\setminus B_{3R_0}}(\Delta u)^2+|u|^{p+1}\leq C.\]
Letting $R\to+\infty$, we get
\[\int_{\R^n}(\Delta u)^2+|u|^{p+1}<+\infty.\]
This then implies that
\[\lim\limits_{R\to+\infty}\int_{B_{2R}\setminus B_R}R^{-1}|\nabla u|+R^{-2}|u|=0.\]
 Then we can proceed as in the subcritical case to
prove that
\[\int_{\R^n}(\Delta u)^2-|u|^{p+1}=0.\]

\subsection{The supercritical case}

Now we consider the case $p>\frac{n+4}{n-4}$.
\begin{lem}\label{lem 5.3}
There exists a constant $C_2$, such that for all $r>3R_0$,
$E(r;0,u)\leq C_2$.
\end{lem}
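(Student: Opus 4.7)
The plan is to exploit the scale invariance of $E$. Setting $u^r(x) := r^{4/(p-1)} u(rx)$, a direct inspection of the powers of $r$ in the definition of $E(r;0,u)$ shows that $E(r;0,u) = E(1;0,u^r)$. It therefore suffices to bound $E(1;0,u^r)$ by a constant independent of $r>3R_0$.

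The key input is the pointwise decay \eqref{5.2}. Transported to $u^r$, it reads
\[
\sum_{k\le 3} |x|^{\frac{4}{p-1}+k} |\nabla^k u^r(x)| \le C_3 \quad\text{whenever } |x|\ge 3R_0/r.
\]
In particular, for $r\ge 6R_0$ the family $\{u^r\}$ is uniformly bounded in $C^3(B_2\setminus B_{1/2})$. This immediately controls every boundary integral in $E(1;0,u^r)$: the surface integrals over $\partial B_1$ of $u^r$, $\nabla u^r$, $\nabla^2 u^r$ are uniformly bounded, and the terms of the form $\frac{d}{d\lambda}[\lambda^{a}\int_{\partial B_\lambda}(\cdots)]|_{\lambda=1}$ -- expanded via the radial-differentiation identity $\frac{d}{dr}\int_{\partial B_r}f = \int_{\partial B_r}[\partial_r f + \tfrac{n-1}{r}f]$ -- reduce to surface integrals of $u^r$ and its derivatives up to order three on $\partial B_1$, which are likewise controlled.

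For the bulk term $\int_{B_1}\tfrac12(\Delta u^r)^2 - \tfrac{1}{p+1}|u^r|^{p+1}$ I would split $B_1 = B_{3R_0/r}\cup(B_1\setminus B_{3R_0/r})$. On the outer annulus, \eqref{5.2} gives $(\Delta u^r)^2 + |u^r|^{p+1} \le C|x|^{-4-8/(p-1)}$, and the supercritical hypothesis $p>\tfrac{n+4}{n-4}$ yields $n-4-\tfrac{8}{p-1}>0$, so
\[
\int_{B_1\setminus B_{3R_0/r}} |x|^{-4-\tfrac{8}{p-1}}\, dx \le C \int_0^1 \rho^{n-5-\tfrac{8}{p-1}}\, d\rho \le C,
\]
uniformly in $r$. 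On the inner ball, the change of variables $y=rx$ turns the integral back into $r^{\tfrac{4(p+1)}{p-1}-n}\int_{B_{3R_0}}|u|^{p+1}$ (and similarly for $(\Delta u)^2$); since $\tfrac{4(p+1)}{p-1}-n = 4+\tfrac{8}{p-1}-n<0$ by supercriticality and $u$ is smooth on the fixed ball $B_{3R_0}$, this contribution even vanishes as $r\to\infty$. Finally, for $3R_0 < r \le 6R_0$ the bound is immediate from the smoothness of $u$ on any fixed compact set.

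The one step requiring care is the passage of the $\frac{d}{d\lambda}$ boundary terms through the radial-differentiation identity, since in principle this could threaten to bring in fourth-order derivatives of $u$ on $\partial B_1$; the structure of $E$ guarantees that only derivatives up to order three actually appear, which is exactly what \eqref{5.2} delivers. This matching of orders is the one place I would verify most carefully, but it is routine once the bookkeeping is set up, and the rest of the argument reduces to the scaling and splitting described above.
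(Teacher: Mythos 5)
Your proposal is correct and follows essentially the same route as the paper: expand the boundary terms of $E$ (either directly, or via the scale invariance $E(r;0,u)=E(1;0,u^r)$ as you do), substitute the pointwise decay \eqref{5.2}, and bound the bulk term by splitting into a fixed inner ball (controlled by smoothness) and an outer annulus (controlled by decay plus supercriticality, since $n-4-\tfrac{8}{p-1}>0$). One small imprecision: since the boundary integrals in $E$ involve $u$ only up to second order, the $\tfrac{d}{d\lambda}$ terms bring in derivatives up to order \emph{two}, not three, on $\partial B_1$; your concern about fourth-order derivatives appearing is unfounded, and \eqref{5.2} is more than sufficient.
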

\begin{proof}
Expanding those boundary integrals in $E(r;0,u)$ into a full
formulation involving the differentials of $u$ up to third order,
and substituting \eqref{5.2} into this formulation, we get
\begin{eqnarray*}
E(r;0,u)&\leq&Cr^{4\frac{p+1}{p-1}-n}\left( \int_{B_r}(\Delta
u)^2+|u|^{p+1}\right)+C r^{\frac{8}{p-1}+1-n}\int_{\partial B_r}u^2\\
&&+Cr^{\frac{8}{p-1}+2-n}\int_{\partial B_r}|u||\nabla u|
+Cr^{\frac{8}{p-1}+3-n}\int_{\partial
B_r}|\nabla u|^2 \\
&&+Cr^{\frac{8}{p-1}+4-n}\int_{\partial B_r}|\nabla u||\nabla^2
u|\\
&\leq&C.
\end{eqnarray*}
This constant only depends on the constant in \eqref{5.2}.
\end{proof}
By Corollary \ref{coro characterization of homogeneous solutions},
we get
\begin{coro}\label{coro 5.3}
\[\int_{B_{3R_0}^c}\frac{\left(\frac{4}{p-1}|x|^{-1}u(x)
+\frac{\partial u}{\partial
r}(x)\right)^2}{|x|^{n-2-\frac{8}{p-1}}}dx<+\infty.\]
\end{coro}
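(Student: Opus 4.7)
The plan is to deduce Corollary \ref{coro 5.3} by integrating the differential inequality from Theorem \ref{thm monotonicity} (equivalently, from Corollary \ref{coro characterization of homogeneous solutions}) on the annulus $B_R\setminus B_{3R_0}$ and then sending $R\to+\infty$, using Lemma \ref{lem 5.3} as the uniform upper bound on $E$.

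More concretely, the monotonicity formula gives
\[
\frac{d}{dr}E(r;0,u)\geq c(n,p)\,r^{-n+2+\frac{8}{p-1}}\int_{\partial B_r}\left(\frac{4}{p-1}r^{-1}u+\frac{\partial u}{\partial r}\right)^2.
\]
Integrating this from $r=3R_0$ to $r=R$ and rewriting the resulting spherical integral in Cartesian coordinates via the coarea formula (noting that on $\partial B_r$ we have $|x|=r$, so the weight $r^{-n+2+\frac{8}{p-1}}$ is exactly $|x|^{-n+2+\frac{8}{p-1}}$), we obtain
\[
E(R;0,u)-E(3R_0;0,u)\;\geq\;c(n,p)\int_{B_R\setminus B_{3R_0}}\frac{\left(\frac{4}{p-1}|x|^{-1}u(x)+\frac{\partial u}{\partial r}(x)\right)^2}{|x|^{n-2-\frac{8}{p-1}}}\,dx.
\]

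Now $E(3R_0;0,u)$ is finite since $u$ is smooth on $\overline{B_{3R_0}}$ (all the integrands in the definition of $E$ are bounded there), and by Lemma \ref{lem 5.3} we have $E(R;0,u)\leq C_2$ uniformly for all $R>3R_0$. Hence the left-hand side stays bounded as $R\to+\infty$. Letting $R\to+\infty$ and applying the monotone convergence theorem to the nonnegative integrand on the right yields
\[
\int_{B_{3R_0}^c}\frac{\left(\frac{4}{p-1}|x|^{-1}u(x)+\frac{\partial u}{\partial r}(x)\right)^2}{|x|^{n-2-\frac{8}{p-1}}}\,dx\;\leq\;\frac{C_2-E(3R_0;0,u)}{c(n,p)}\;<\;+\infty.
\]

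There is essentially no hard step here: the corollary is a direct bookkeeping consequence of the two ingredients already established, namely the pointwise differential inequality from the monotonicity formula and the uniform upper bound on $E$. The only minor point to verify is that the coarea rewriting is legitimate, which is clear since $u\in W^{2,2}_{\mathrm{loc}}$ together with the decay estimate \eqref{5.2} ensures that the integrand is locally integrable on each annulus and that the boundary terms entering $E$ are well-defined for every $r>3R_0$.
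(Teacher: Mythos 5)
Your proof is correct and is exactly the argument the paper intends (the paper simply states the corollary as an immediate consequence of Corollary~\ref{coro characterization of homogeneous solutions} and Lemma~\ref{lem 5.3} without writing out the integration). The bookkeeping — integrating the differential inequality in $r$, converting the spherical integral to a volume integral via the coarea formula, using finiteness of $E(3R_0;0,u)$ by smoothness and the uniform upper bound $E(R;0,u)\leq C_2$, then letting $R\to\infty$ — is precisely what is needed and is carried out cleanly.
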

As in the proof for stable solutions, define the blowing down
sequence
\[u^\lambda(x)=\lambda^{\frac{4}{p-1}}u(\lambda x).\]
By Lemma \ref{finite Morse decay bound}, $u^\lambda$ are uniformly
bounded in $C^5(B_r(0)\setminus B_{1/r}(0))$ for any fixed $r>1$.
$u^\lambda$ is stable outside $B_{R_0/\lambda}(0)$. There exists a
function $u^\infty\in C^4(\mathbb{R}^n\setminus\{0\})$, such that up
to a subsequence of $\lambda\to+\infty$, $u^\lambda$ converges to
$u^\infty$ in $C^4_{loc}(\mathbb{R}^n\setminus\{0\})$. $u^\infty$ is
a stable solution of \eqref{equation} in
$\mathbb{R}^n\setminus\{0\}$.

For any $r>1$, by Corollary \ref{coro 5.3},
\begin{eqnarray*}
&&\int_{B_r\setminus
B_{1/r}}\frac{\left(\frac{4}{p-1}|x|^{-1}u^\infty(x) +\frac{\partial
u^\infty}{\partial r}(x)\right)^2}{|x|^{n-2-\frac{8}{p-1}}}dx\\&=&
\lim\limits_{\lambda\to+\infty}\int_{B_r\setminus
B_{1/r}}\frac{\left(\frac{4}{p-1}|x|^{-1}u^\lambda(x)
+\frac{\partial u^\lambda}{\partial
r}(x)\right)^2}{|x|^{n-2-\frac{8}{p-1}}}dx\\
&=&\lim\limits_{\lambda\to+\infty}\int_{B_{\lambda r}\setminus
B_{\lambda/r}}\frac{\left(\frac{4}{p-1}|x|^{-1}u(x) +\frac{\partial
u}{\partial r}(x)\right)^2}{|x|^{n-2-\frac{8}{p-1}}}dx\\
&=&0.
\end{eqnarray*}
Hence $u^\infty$ is homogeneous, and by Theorem \ref{thm homogeneous
solution}, $u^\infty\equiv0$. This holds for every limit of
$u^\lambda$ as $\lambda\to+\infty$, thus we have
\[\lim\limits_{x\to\infty}|x|^{\frac{4}{p-1}}|u(x)|=0.\]
Then as in the proof of Corollary \ref{corococo}, we get
\begin{lem}
\[\lim\limits_{x\to\infty}\sum_{k\leq 4}|x|^{\frac{4}{p-1}+k}|\nabla^ku(x)|=0.\]
\end{lem}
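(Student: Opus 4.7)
The plan is to follow the proof of Corollary \ref{corococo} verbatim, but with the improved input $|x|^{4/(p-1)}|u(x)| \to 0$ in place of the mere uniform bound $|x|^{4/(p-1)}|u(x)| \leq C_1$, and to bootstrap one derivative further in the elliptic estimate at the end.

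Fix $\varepsilon > 0$, choose $R_\varepsilon > 3R_0$ such that $|y|^{4/(p-1)}|u(y)| \leq \varepsilon$ for all $|y| \geq R_\varepsilon$, and for $|x_0| \geq 2R_\varepsilon$ set $\lambda = |x_0|/2$ and
\[ \bar u(y) := \lambda^{\frac{4}{p-1}}\, u(x_0 + \lambda y), \qquad y \in B_1(0). \]
Then $\bar u$ still solves $\Delta^2 \bar u = |\bar u|^{p-1}\bar u$ on $B_1(0)$, and since every point $x_0 + \lambda y$ with $y \in B_1(0)$ lies outside $B_{R_\varepsilon}(0)$, one gets $\|\bar u\|_{L^\infty(B_1)} \leq C\varepsilon$ with $C$ depending only on $n,p$. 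In particular $\||\bar u|^{p-1}\bar u\|_{L^\infty(B_1)} \leq C\varepsilon^p$.

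Interior elliptic regularity for the biharmonic operator, applied one step further than in Corollary \ref{corococo}, yields
\[ \sum_{k\leq 4}|\nabla^k \bar u(0)| \;\leq\; C'\bigl(\|\bar u\|_{L^\infty(B_1)} + \bigl\||\bar u|^{p-1}\bar u\bigr\|_{C^{0,\alpha}(B_{3/4})}\bigr) \;\leq\; C''\varepsilon, \]
the constant $C''$ depending only on $n,p$ and on the uniform $C^3$ estimate of Corollary \ref{corococo} (used to interpolate the Hölder norm of the nonlinearity between the small $L^\infty$ norm and the bounded $C^3$ norm). Rescaling back via $\nabla^k \bar u(0) = \lambda^{4/(p-1)+k}\nabla^k u(x_0)$ and recalling $\lambda = |x_0|/2$ gives
\[ \sum_{k\leq 4}|x_0|^{\frac{4}{p-1}+k}|\nabla^k u(x_0)| \;\leq\; C'''\varepsilon \qquad \text{for every } |x_0| \geq 2R_\varepsilon. \]
Since $\varepsilon > 0$ was arbitrary, the limit at infinity is zero, as desired.

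The only point which is genuinely more delicate than in Corollary \ref{corococo} — and hence the main obstacle — is to ensure that the elliptic estimate above is \emph{linear} in $\varepsilon$ (rather than merely bounded by a universal constant). This rests on the twin smallness of $\bar u$ and of the biharmonic source $|\bar u|^{p-1}\bar u$ in $L^\infty$, combined with interpolation of intermediate Hölder norms between the small $L^\infty$ norm of $\bar u$ and the uniform higher-order bound supplied by Corollary \ref{corococo}. Everything else — the choice of rescaling, the passage to $B_1(0)$, and the inversion of the rescaling — is lifted verbatim from the proof of Corollary \ref{corococo}.
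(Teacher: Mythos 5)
Your proposal is correct and follows essentially the same route as the paper, which simply refers the reader back to the rescaling argument of Corollary~\ref{corococo}; you usefully fill in the missing bootstrap step (getting the $C^\alpha$ control on the nonlinearity from the small $L^\infty$ bound together with the uniform $C^3$ bound of Corollary~\ref{corococo}). One minor remark: the final elliptic estimate need not be literally linear in $\varepsilon$ — unless $\alpha$ is chosen $\leq p-1$ it comes out as $O(\varepsilon + \varepsilon^{p-\alpha})$ — but this is immaterial since any bound vanishing as $\varepsilon \to 0$ gives the stated limit.
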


For any $\varepsilon>0$, take an $R$ such that for $|x|>R$,
\[\sum_{k\leq 4}|x|^{\frac{4}{p-1}+k}|\nabla^ku(x)|\leq \varepsilon.\]
Then for $r\gg R$,
\begin{eqnarray*}
E(r;0,u)&\leq&Cr^{4\frac{p+1}{p-1}-n}\int_{B_R(0)}\left[(\Delta
u)^2+|u|^{p+1}\right]+C\varepsilon
r^{4\frac{p+1}{p-1}-n}\int_{B_r(0)\setminus B_{R}(0)}|x|^{-4\frac{p+1}{p-1}}\\
&&+C\varepsilon
r^{4\frac{p+1}{p-1}+1-n}\int_{\partial B_r(0)}|x|^{-4\frac{p+1}{p-1}}\\
&\leq&C(R)(r^{4\frac{p+1}{p-1}-n}+\varepsilon).
\end{eqnarray*}
Since $4\frac{p+1}{p-1}-n<0$ and $\varepsilon$ can be arbitrarily
small, we get $\lim\limits_{r\to+\infty}E(r;0,u)=0$. Because
$\lim\limits_{r\to 0}E(r;0,u)=0$ (by the smoothness of $u$), the
same argument for stable solutions implies that $u\equiv 0$.
\begin{rmk}
The monotonicity formula approach here is in some sense equivalent
to the Pohohazev identity method (see for example \cite{Y-W}). The
convergence of $u^\lambda$ can also be seen by writing the equation
in exponential polar coordinates.
\end{rmk}

\section{Partial regularity in high dimensions}
\setcounter{equation}{0}

Here we study the partial regularity
for the extremal solution to the problem \eqref{boundary value problem}, and prove Theorems \ref{thm full regularity} and \ref{thm partial regularity}.
Recall that we defined $n_p$ to be the smallest dimension such that
Theorem \ref{thm homogeneous solution} does not hold. This is also
the smallest dimension such that the Liouville theorem for stable
solutions, Theorem \ref{thm Liouville for stable}, and the
classification result for stable homogeneous solutions, Theorem
\ref{thm homogeneous solution}, do not hold.
\begin{proof}[Proof of Theorem \ref{thm full regularity}.]
For $0<\lambda<\lambda^\ast$ let $u_\lambda>0$ be the minimal solution of \eqref{boundary value problem}.
We claim that
\begin{equation}\label{6.1}
\sup_{\lambda\in(0,\lambda^\ast)}\|u_\lambda\|_{L^\infty(\Omega)}<+\infty.
\end{equation}
Then by elliptic estimates, as $\lambda\to\lambda^\ast$, $u_\lambda$
are uniformly bounded in $C^5(\overline{\Omega})$. Because
$u_\lambda$ converges to $u_{\lambda^\ast}$ pointwisely in $\Omega$,
$u_{\lambda^\ast}\in C^4(\overline{\Omega})$, and then we get
$u_{\lambda^\ast}\in C^\infty(\overline{\Omega})$ by bootstrapping
elliptic estimates.

To prove \eqref{6.1}, we use the classical blow up method of Gidas
and Spruck. Let $x_\lambda$ attain
$\max_{\overline{\Omega}}u_\lambda$, and assume that
\[L_\lambda=u_\lambda(x_\lambda)+1\to+\infty.\]
By the maximum principle, $x_\lambda\in\Omega$ is an interior point
and
\begin{equation}\label{6.3}
-\Delta u_\lambda>0~~~\text{in}~~~\Omega.
\end{equation}

Define
\[\bar{u}_\lambda=L_\lambda^{-1}\left(
u_{\lambda}(x_\lambda+L_\lambda^{-\frac{p-1}{4}}x)+1\right)~~\text{in}~~\Omega_\lambda,\]
where $\Omega_\lambda=L_\lambda^{-\frac{p-1}{4}}(\Omega-x_\lambda)$.
$\bar{u}_\lambda$ is a smooth stable solution of \eqref{equation} in
$\Omega_\lambda$, satisfying
\begin{equation}\label{6.2}
\bar{u}_\lambda(0)=\max_{\overline{\Omega_\lambda}}\bar{u}_\lambda=1,
\end{equation}
and the boundary condition
\[\bar{u}_\lambda=L_\lambda^{-1},\ \ \ \ \Delta \bar{u}_\lambda=0~~~\text{on}~~~\partial\Omega_\lambda.\]
From this, with the help of standard elliptic estimates, we see for
any $R>0$, $\bar{u}_\lambda$ are uniformly bounded in
$C^5(\Omega_\lambda\cap B_R(0))$. By rescaling \eqref{6.3},
\begin{equation}\label{6.4}
-\Delta \bar{u}_\lambda>0~~~\text{in}~~~\Omega_\lambda.
\end{equation}

Since $\Omega$ is a smooth domain, as $\lambda\to\lambda^\ast$,
$\Omega_\lambda$ either converges to $\mathbb{R}^n$ or to a half space
$H$. In the former case, $\bar{u}_\lambda$ converges (up
to a subsequence) to a limit $\bar{u}$ in $C^4_{loc}(\mathbb{R}^n)$.
Here $\bar{u}$ is a positive, stable, $C^4$ solution of
\eqref{equation} in $\mathbb{R}^n$. Then by Theorem \ref{thm
Liouville for stable}, $\bar{u}\equiv 0$. However, by passing to the limit
in \eqref{6.2}, we obtain
\[\bar{u}_\lambda(0)=1.\]
This is a contradiction.

If $\Omega_\lambda$ converges to a half space $H=\{ x_1>-h\}$ for
some $h>0$, $\bar{u}_\lambda$ converges (up to a subsequence) to a
limit $\bar{u}$ in $C^4_{loc}(\overline{H})$. Here $\bar{u}$ is a
positive, stable, $C^4$ solution of \eqref{equation} in $H$, with
the boundary conditions
\[\bar{u}=\Delta \bar{u}=0~~~\text{on}~~~\partial H.\]
By taking limits in \eqref{6.2} and \eqref{6.4}, we obtain
\begin{equation*}
\left\{ \begin{aligned}
 &-\Delta \bar{u}=\bar{v}>0,~~\text{in}~~H,\\
 &-\Delta \bar{v}=\bar{u}^p>0,~~\text{in}~~H,\\
 &\bar{u}(0)=\max_{\overline{H}}\bar{u}=1.
                          \end{aligned} \right.
\end{equation*}
By elliptic estimates, the last condition implies that $\bar{v}$ is
bounded in $H$. Then by [Theorem 2, \cite{D}] or [Theorem 10, \cite{Si}], $\frac{\partial \bar{u}}{\partial x_1} >0, \frac{\partial  \bar{v}}{\partial x_1} >0$.  Then the function $ w(y)= \lim_{x_1 \to +\infty}  \bar{u} (x_1, y)$ exists for all $y \in \R^{n-1}$ and satisfies $ \Delta^2 w= w^p$ in $\R^{n-1}$. By the arguments in Section 3 of \cite{Y-W} this function $w$ must be stable in $\R^{n-1}$ and non trivial. By Theorem \ref{thmstable}, $p \geq p_c (n-1)\geq p_c(n)$. This is impossible.

We conclude that $\bar{u}\equiv 0$, which is
a contradiction. This finishes the proof of \eqref{6.1}.
\end{proof}


\bigskip

The remaining part is devoted to the proof of Theorem \ref{thm partial regularity}.
First we need the following lemma.
\begin{lem}
There exists a constant $C$, such that, for any ball
$B_{2r}(x)\subset\Omega$,
\begin{equation}\label{estimate of 2p by p+1}
r^{\frac{8p}{p-1}-n}\int_{B_{r}(x)}(u_{\lambda^\ast}+1)^{2p}\leq
Cr^{4\frac{p+1}{p-1}-n}\int_{B_{2r}(x)}(u_{\lambda^\ast}+1)^{p+1}+(\Delta
u_{\lambda^\ast})^2.
\end{equation}
\end{lem}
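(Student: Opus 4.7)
The inequality is a Caccioppoli-type reverse-integrability estimate: it upgrades the local $L^{p+1}$ integrability of $u_{\lambda^\ast}+1$ and the local $L^2$ integrability of $\Delta u_{\lambda^\ast}$ into local $L^{2p}$ control of $u_{\lambda^\ast}+1$, using both the equation $\Delta^2 u_{\lambda^\ast}=\lambda^\ast(u_{\lambda^\ast}+1)^p$ and the stability inherited from the approximating minimal solutions. My plan is to first establish the estimate for each smooth positive stable solution $u_\lambda$, $\lambda<\lambda^\ast$, with a constant independent of $\lambda$, and then pass to the limit $\lambda\uparrow\lambda^\ast$ by monotone convergence of $(u_\lambda+1)^{2p}$ on the left and the strong $H^2_{loc}$-convergence $\Delta u_\lambda\to\Delta u_{\lambda^\ast}$ on the right.

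Fix a cutoff $\eta\in C_c^\infty(B_{2r}(x))$ with $\eta\equiv1$ on $B_r(x)$ and $|\nabla^k\eta|\le Cr^{-k}$, together with a large integer $m$. The argument combines two ingredients. First, the stability of $u_\lambda$ applied with the Moser-type test function $\phi=(u_\lambda+1)^{(p+1)/2}\eta^m$ yields
$$\lambda p\int(u_\lambda+1)^{2p}\eta^{2m}\le\int(\Delta\phi)^2,$$
and expanding $\Delta\phi$ via the product rule gives, as leading terms,
$$\tfrac{(p+1)^2}{4}(u_\lambda+1)^{p-1}(\Delta u_\lambda)^2\eta^{2m}+\tfrac{(p+1)^2(p-1)}{4}(u_\lambda+1)^{p-2}(\Delta u_\lambda)|\nabla u_\lambda|^2\eta^{2m}+\tfrac{(p+1)^2(p-1)^2}{16}(u_\lambda+1)^{p-3}|\nabla u_\lambda|^4\eta^{2m},$$
plus cutoff-derivative error terms supported in $B_{2r}\setminus B_r$ and bounded by $Cr^{-4}\int_{B_{2r}}(u_\lambda+1)^{p+1}$. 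Second, multiplying the equation by $(u_\lambda+1)^p\eta^{2m}$ and integrating by parts twice yields the identity
$$\lambda\int(u_\lambda+1)^{2p}\eta^{2m}=p\int(u_\lambda+1)^{p-1}(\Delta u_\lambda)^2\eta^{2m}+p(p-1)\int(u_\lambda+1)^{p-2}|\nabla u_\lambda|^2\Delta u_\lambda\,\eta^{2m}+\mathcal R,$$
with $\mathcal R$ controllable, via H\"older and Young, by $Cr^{-4}\int_{B_{2r}}(u_\lambda+1)^{p+1}+C\int_{B_{2r}}(\Delta u_\lambda)^2$.

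Multiplying this identity by $(p+1)^2/(4p)$ and substituting into the expansion of $\int(\Delta\phi)^2$ cancels the two mixed $\Delta u_\lambda$-integrals and replaces them by $\tfrac{(p+1)^2\lambda}{4p}\int(u_\lambda+1)^{2p}\eta^{2m}$. Since $4p^2-(p+1)^2=(p-1)(3p+1)>0$ for $p>1$, this coefficient is strictly smaller than the left-hand coefficient $\lambda p$, so the $\int(u_\lambda+1)^{2p}$ contribution absorbs into the left. The residual $|\nabla u_\lambda|^4$ integral is then handled via the Bochner identity $\tfrac12\Delta|\nabla u_\lambda|^2=|D^2u_\lambda|^2+\nabla u_\lambda\cdot\nabla\Delta u_\lambda$, combined with integration by parts shifting derivatives onto $(u_\lambda+1)^{p-2}\eta^{2m}$ and a final Young's inequality, reducing it to quantities already bounded by $\int_{B_{2r}}(u_\lambda+1)^{p+1}$ and $\int_{B_{2r}}(\Delta u_\lambda)^2$. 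Rescaling by $r^{4(p+1)/(p-1)-n}$ then produces the claimed scale-invariant estimate.

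The main obstacle is the bookkeeping in the absorption step, especially ensuring that the $|\nabla u_\lambda|^4$ and mixed gradient integrals reduce, after Bochner and successive integrations by parts, to expressions involving only $\int(u_\lambda+1)^{p+1}$ and $\int(\Delta u_\lambda)^2$; a sufficiently large cutoff exponent $m$ is needed so that every derivative falling onto $\eta$ is absorbed into the $r^{-4}$-weighted right-hand side without loss.
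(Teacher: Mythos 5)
Your overall framework — establish the estimate for each smooth minimal solution $u_\lambda$ with a $\lambda$-independent constant, then pass to the limit $\lambda\uparrow\lambda^*$ using Fatou (or monotone convergence) on the left and local $W^{2,2}$-convergence on the right — matches the paper's. The difficulty is in the step you treat as routine bookkeeping.

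Your route to the estimate for $u_\lambda$ is a direct Moser-type iteration: stability tested against $(u_\lambda+1)^{(p+1)/2}\eta^m$, the equation tested against $(u_\lambda+1)^p\eta^{2m}$, then a linear combination chosen to cancel the $(u_\lambda+1)^{p-1}(\Delta u_\lambda)^2$ and $(u_\lambda+1)^{p-2}\Delta u_\lambda|\nabla u_\lambda|^2$ integrals. That combination is algebraically correct and, because $4p^2-(p+1)^2=(p-1)(3p+1)>0$, it does leave a positive multiple of $\int(u_\lambda+1)^{2p}\eta^{2m}$ on the left. But the quantity left over on the right is exactly $\tfrac{(p+1)^2(p-1)^2}{16}\int(u_\lambda+1)^{p-3}|\nabla u_\lambda|^4\eta^{2m}$ — and this is the term the paper singles out, in the introduction, as \emph{the} obstruction that prevents Moser iteration from working for fourth-order problems (``the additional term $\int u^{q-3}|\nabla u|^4$ makes the Moser iteration argument difficult to use''). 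Your claim that Bochner plus integration by parts plus Young reduces this integral to $Cr^{-4}\int_{B_{2r}}(u_\lambda+1)^{p+1}+C\int_{B_{2r}}(\Delta u_\lambda)^2$ is not substantiated, and I do not see how to make it so. Shifting a derivative from $|\nabla u_\lambda|^4$ onto $(u_\lambda+1)^{p-2}$ produces integrals of the form $\int(u_\lambda+1)^{p-2}\Delta u_\lambda|\nabla u_\lambda|^2\eta^{2m}$ and $\int(u_\lambda+1)^{p-2}D^2u_\lambda(\nabla u_\lambda,\nabla u_\lambda)\eta^{2m}$, and applying Bochner/Reilly to $\int(u_\lambda+1)^{p-1}|D^2u_\lambda|^2\eta^{2m}$ only reproduces the same two types of terms together with $\int(u_\lambda+1)^{p-1}(\Delta u_\lambda)^2\eta^{2m}$. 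None of these is controlled by $\int(u_\lambda+1)^{p+1}$ and $\int(\Delta u_\lambda)^2$ alone; one is left with a coupled system of inequalities whose closure depends delicately on $p$ and on the sign of $\Delta u_\lambda$, which your write-up does not invoke.

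The paper sidesteps $|\nabla u_\lambda|^4$ altogether. It first records, via the maximum principle and Lemma~3.2 of \cite{CEG}, the Souplet-type inequality
$\Delta(u_\lambda+1)\le-\sqrt{2\lambda/(p+1)}\,(u_\lambda+1)^{(p+1)/2}<0$
(so that in particular $-\Delta u_\lambda>0$), and then applies the estimate (2.15) of \cite{Y-W}, which is derived by testing with $v=-\Delta u_\lambda$ (not with a power of $u_\lambda$) and already has $\int w_\lambda^{2p}\eta^2$ on the left and only $-\Delta w_\lambda\,w_\lambda^p$ and $(\Delta w_\lambda)^2$ integrals on the right. The superharmonicity and the Souplet lower bound on $-\Delta w_\lambda$ are what make this route close; your proposal uses neither. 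As written there is a genuine gap in the control of the $|\nabla u_\lambda|^4$ integral, and without importing the sign information on $\Delta u_\lambda$ and the Souplet inequality the argument does not go through.
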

\begin{proof}
Denote $w_\lambda=u_\lambda+1$. By the maximum principle and Lemma
3.2 in \cite{CEG}, for any $\lambda\in(0,\lambda^\ast)$,
\[\Delta
w_\lambda\leq-\sqrt{\frac{2\lambda}{p+1}}w_\lambda^{\frac{p+1}{2}}<0~~\text{in}~~\Omega.\]
 Since $w_\lambda$ is smooth in $\Omega$, we can follow the proof in \cite{Y-W} to get Eq. (2.15) in \cite{Y-W}.
That is, for any $\eta\in C_0^\infty(\Omega)$,
\begin{eqnarray}\label{6.5}
\int_{\Omega}w_\lambda^{2p}\eta^2&\leq &\int_{\Omega}-\Delta
w_\lambda w_\lambda^p\left(|\nabla\eta|^2
+|\Delta\eta^2|\right)\\\nonumber &&+C\int_\Omega(\Delta
w_\lambda)^2\left[|\nabla\Delta\eta\nabla\eta|+|\Delta|\nabla\eta|^2|+|\Delta\eta|^2.\right]
\end{eqnarray}
Take $\varphi\in C_0^\infty(B_{2r}(x))$ such that $0\leq\varphi\leq
1$, $\varphi\equiv 1$ in $B_{r}(x)$ and
\[\sum_{k\leq 4}r^k|\nabla^k\varphi|\leq 1000.\]
Substituting $\eta=\varphi^m$ into \eqref{6.5} with $m$ larger, and
then using H\"{o}lder inequality (exactly as in the derivation of
Eq. (2.16) of \cite{Y-W}), we get \eqref{estimate of 2p by p+1} for
$u_\lambda$.

This implies that $u_\lambda$ are uniformly bounded in
$L^{2p}_{loc}(\Omega)$. By the interior $L^2$ estimate, $u_\lambda$
are also uniformly bounded in $W^{4,2}_{loc}(\Omega)$. By the same
proof of \eqref{convergence in q<p+1}, as $\lambda\to\lambda^\ast$,
$u_\lambda\to u_{\lambda^\ast}$ in $W^{3,2}_{loc}(\Omega)\cap
L^{p+1}_{loc}(\Omega)$. Then
\begin{eqnarray*}
r^{\frac{8p}{p-1}-n}\int_{B_{r}(x)}(u_{\lambda^\ast}+1)^{2p}&\leq&\lim\limits_{\lambda\to\lambda^\ast}
r^{\frac{8p}{p-1}-n}\int_{B_{r}(x)}(u_{\lambda}+1)^{2p}\\
&\leq&C\lim\limits_{\lambda\to\lambda^\ast}r^{4\frac{p+1}{p-1}-n}\int_{B_{2r}(x)}(u_{\lambda}+1)^{p+1}
+(\Delta u_{\lambda})^2\\
&\leq& Cr^{4\frac{p+1}{p-1}-n}\int_{B_{2r}(x)}
(u_{\lambda^\ast}+1)^{p+1}+(\Delta u_{\lambda^\ast})^2.
\end{eqnarray*}
Here we have used Fatou's lemma to deduce the first inequality.
\end{proof}
Below we denote $u=u_{\lambda^\ast}+1$. Inequality
\eqref{estimate of 2p by p+1} implies that
\begin{equation}\label{estimate of 2p}
\int_{B_r(x)}u^{2p}\leq Cr^{n-\frac{8p}{p-1}}.
\end{equation}
 for any
ball $B_r(x)\subset\Omega$, with the constant $C$ depending only on
$p$ and $\Omega$. See for example the derivation of Eq.(2.16) in
\cite{Y-W}. Similarly, $u$ also satisfies \eqref{estimate of p+1}
for any ball $B_R(x)\subset\Omega$. Estimate \eqref{estimate of 2p
by p+1} will play a crucial role in our proof of the
$\varepsilon$-regularity lemma. Note that both \eqref{estimate of 2p
by p+1} and \eqref{estimate of 2p} are invariant under the scaling
for \eqref{equation}. Theses two are also preserved under various
limits (The precise notion of limit will be given below).

To prove the partial regularity of $u$, first we need the following
improvement of decay estimate.
\begin{lem}
\label{lem improvement of decay} There exist two universal constants
$\varepsilon_0>0$ and $\theta\in(0,1)$, such that if $u$ is a positive stable
solution of \eqref{equation} satisfying the estimate \eqref{estimate
of 2p by p+1}, and
\[(2R)^{4\frac{p+1}{p-1}-n}\int_{B_{2R}} u^{p+1}+(\Delta u)^2=\varepsilon\leq\varepsilon_0.\]
Then
\[(\theta R)^{4\frac{p+1}{p-1}-n}\int_{B_{\theta R}} u^{p+1}+(\Delta u)^2\leq\frac{\varepsilon}{2}.\]
\end{lem}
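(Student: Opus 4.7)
\emph{Proof plan.} I plan to argue by contradiction, turning the smallness of the scale-invariant integral energy into pointwise smallness by elliptic bootstrap, and then closing the loop through the volume factor $\theta^n$. Set $\alpha:=\tfrac{4(p+1)}{p-1}-n$; in the relevant supercritical regime $p>\tfrac{n+4}{n-4}$ one has $\alpha<0$ while $n+\alpha=\tfrac{4(p+1)}{p-1}>0$. The scale-invariant quantity $\mathcal E(r):=r^{\alpha}\int_{B_r}(u^{p+1}+(\Delta u)^2)$, the inequality \eqref{estimate of 2p by p+1}, and the equation itself are all preserved by the rescaling $u(x)\mapsto\lambda^{4/(p-1)}u(\lambda x)$, so I may assume $R=1/2$ throughout. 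If the lemma fails, I obtain a sequence $u_k$ of positive stable solutions on $B_1$, each satisfying \eqref{estimate of 2p by p+1}, with $\varepsilon_k:=\int_{B_1}(u_k^{p+1}+(\Delta u_k)^2)\to 0$ yet $\theta^{\alpha}\int_{B_\theta}(u_k^{p+1}+(\Delta u_k)^2)>\varepsilon_k/2$ for whichever candidate $\theta$ I have in mind.

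The heart of the argument is to promote the integral smallness to the pointwise bound
\[
\|u_k\|_{L^\infty(B_{1/8})}^{p+1}+\|\Delta u_k\|_{L^\infty(B_{1/8})}^2\le C\,\varepsilon_k,
\]
with $C=C(n,p)$. I start from the three integral bounds $\int_{B_1}u_k^{p+1}\le\varepsilon_k$, $\int_{B_1}(\Delta u_k)^2\le\varepsilon_k$, and (crucially, from \eqref{estimate of 2p by p+1}) $\int_{B_{1/2}}u_k^{2p}\le C\varepsilon_k$. H\"older gives $\|u_k\|_{L^2(B_1)}\le C\varepsilon_k^{1/(p+1)}$, while the $L^{2p}$ control gives $\|u_k^p\|_{L^2(B_{1/2})}\le C\sqrt{\varepsilon_k}$. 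Interior $L^2$ regularity for $\Delta^2 u_k=u_k^p$ then yields $\|u_k\|_{W^{4,2}(B_{1/4})}\le C\varepsilon_k^{1/(p+1)}$. Rewriting $\Delta^2u_k=u_k^p$ as the coupled system $\Delta u_k=v_k$, $\Delta v_k=u_k^p$, and iterating Calder\'on--Zygmund together with Sobolev embedding a finite number of times (the number being controlled by $n$ and $p$), I upgrade this to the advertised pointwise bound.

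For $\theta\in(0,1/8)$ the pointwise estimate gives $\int_{B_\theta}(u_k^{p+1}+(\Delta u_k)^2)\le C\theta^n\varepsilon_k$, hence
\[
\theta^{\alpha}\int_{B_\theta}(u_k^{p+1}+(\Delta u_k)^2)\le C\theta^{n+\alpha}\varepsilon_k.
\]
Because $n+\alpha>0$, I first fix $\theta$ small enough, depending only on $n$, $p$ and the constant $C$ produced by the bootstrap, so that $C\theta^{n+\alpha}\le 1/2$; this determines the universal $\theta$. Then choosing $\varepsilon_0$ so small that all the constants above are uniform for $\varepsilon_k\le\varepsilon_0$ yields $\mathcal E_k(\theta)\le\varepsilon_k/2$, contradicting the standing assumption. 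The main obstacle is precisely the bootstrap to the pointwise bound: in large dimensions $W^{4,2}\not\hookrightarrow L^\infty$, so one has to iterate interior elliptic regularity a dimension-dependent number of times while carefully propagating the sharp dependence of the constants on $\varepsilon_k$, so that the final inequality has the homogeneity $\|u_k\|_\infty^{p+1}\lesssim\varepsilon_k$ and $\|\Delta u_k\|_\infty^{2}\lesssim\varepsilon_k$ needed to beat the negative power $\theta^{\alpha}$.
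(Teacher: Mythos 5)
Your plan hinges on promoting the integral smallness to the pointwise bound $\|u_k\|_{L^\infty(B_{1/8})}^{p+1}+\|\Delta u_k\|_{L^\infty(B_{1/8})}^2\leq C\varepsilon_k$ by iterating elliptic $L^q$ estimates and Sobolev embeddings. That bootstrap does not close in the regime the lemma is actually used in. Starting from $u\in L^q$, one step of the scheme ($\Delta^2u=u^p$, Calder\'on--Zygmund, Sobolev) gives $u\in L^{q'}$ with $1/q'=p/q-4/n$, and this \emph{improves} integrability (i.e.\ $q'>q$) only when $q>n(p-1)/4$. Your starting exponent is $q_0=2p$ from \eqref{estimate of 2p by p+1}, and $2p>n(p-1)/4$ is equivalent to $n<8p/(p-1)$. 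But the supercritical regime covered by the lemma includes all $p$ with $8p/(p-1)\leq n$ (for instance $n=13$ and $p\geq 13/5$, well above $p_S(13)=17/9$), and there the iteration stalls or regresses. This is not a mere bookkeeping issue: $8p/(p-1)\leq n$ is exactly the condition under which the singular profile $r^{-4/(p-1)}$ belongs to $L^{2p}_{loc}$, so the three integral bounds you list genuinely fail to force boundedness by iteration alone. Consequently the contradiction loop you set up cannot be closed by a finite bootstrap, and no choice of ``dimension-dependent number of iterations'' fixes it.

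The paper therefore does not attempt an $L^\infty$ estimate in this lemma. Instead, after getting $u\in W^{4,2}(B_{4/3})$ with norm $O(\varepsilon^{1/(p+1)})$ and picking a good sphere $\partial B_{r_0}$, it splits $u=u_1+u_2$ on $B_{r_0}$ with $u_1$ solving $\Delta^2u_1=u^p$ with zero Navier data and $u_2$ biharmonic with $u$'s boundary data. The biharmonic part is tamed by the maximum-type estimate $\sup_{B_{1/2}}|u_2|\lesssim\varepsilon^{1/(p+1)}$ and, crucially, by the subharmonicity of $(\Delta u_2)^2$, which converts the $L^2$ bound $\int_{B_{r_0}}(\Delta u_2)^2\lesssim\varepsilon$ into the scale-gaining inequality $r^{4\frac{p+1}{p-1}-n}\int_{B_r}(\Delta u_2)^2\lesssim r^{4\frac{p+1}{p-1}}\varepsilon$. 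The part $u_1$ is controlled through the Green-function bound $\|u_1\|_{L^1}\lesssim\varepsilon^{p/(p+1)}$, Sobolev $W^{4,2}\hookrightarrow L^{2n/(n-8)}$, interpolation to get $\|u_1\|_{L^2}\lesssim\varepsilon^{1/2+2\delta}$, a Gagliardo--Nirenberg interpolation for $\|\Delta u_1\|_{L^2}$, and finally $\int u^pu_1\lesssim\varepsilon^{1+2\delta}$. Putting the pieces together yields the decay $r^{4\frac{p+1}{p-1}-n}\int_{B_r}(u^{p+1}+(\Delta u)^2)\lesssim r^{4\frac{p+1}{p-1}-n}(\varepsilon^{1+2\delta}+\varepsilon^{2p/(p+1)})+r^{4\frac{p+1}{p-1}}\varepsilon$, after which one chooses $\theta$ so the last term is $\leq\varepsilon/4$ and then $\varepsilon_0$ so the others are $\leq\varepsilon/4$. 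If you want to pursue the contradiction framework, you would still have to import this decomposition; the direct $L^\infty$ route is blocked.
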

\begin{proof}
By rescaling, we can assume $R=1$. By \eqref{estimate of 2p by
p+1}, we have
\begin{equation}
\int_{B_{3/2}} u^{2p}\leq C\int_{B_{2}} u^{p+1}+(\Delta u)^2\leq
C\varepsilon.
\end{equation}
By $L^2$ estimates applied to $u$,
\[\|u\|_{W^{4,2}(B_{4/3})}\leq C\left(\|u^p\|_{L^2(B_{3/2})}+\|u\|_{L^2(B_{3/2})}\right)
\leq C\varepsilon^{\frac{1}{p+1}}.\] We can choose an
$r_0\in(1,4/3)$ so that
\begin{equation}
\label{bound on r_0}
\|u\|_{W^{2,2}(\partial B_{r_0})}\leq C\varepsilon^{\frac{1}{p+1}}.
\end{equation}
Now take the decomposition $u=u_1+u_2$, where
\begin{equation*}
\left\{ \begin{aligned}
 &\Delta^2 u_1=u^p,  &~~in  &~~B_{r_0},\\
 & u_1=\Delta u_1=0,  ~~ ~~~~  &~~on  & ~~\partial B_{r_0}(0),
                          \end{aligned} \right.
\end{equation*}
and
\begin{equation*}
 \left\{ \begin{aligned}
 &\Delta^2 u_2=0,  &~~in  &~~B_{r_0},\\
 & u_2=u, \Delta u_2=\Delta u,  ~~ ~~~~  &~~on  & ~~\partial
 B_{r_0}(0).
                          \end{aligned} \right.
\end{equation*}
By this decomposition,
\[\int_{B_{r_0}}\Delta u_1\Delta u_2=0.\]
Hence
\[\int_{B_{r_0}}(\Delta u)^2=\int_{B_{r_0}}(\Delta
u_1)^2+\int_{B_{r_0}}(\Delta u_2)^2.\]
 In particular,
\begin{equation}\label{orthogonal}
\int_{B_{r_0}}(\Delta u_2)^2\leq C\varepsilon.
\end{equation}

 By elliptic estimates
for biharmonic functions and \eqref{bound on r_0}, we have
\begin{equation*}
\sup_{B_{1/2}}|u_2|\leq C\left(\int_{\partial B_{r_0}}u^2+(\Delta
u)^2\right)^{1/2}\leq C\varepsilon^{\frac{1}{p+1}}.
\end{equation*}
Since $\Delta u_2$ is harmonic, $(\Delta u_2)^2$ is subharmonic in
$B_{r_0}$. By the mean value inequality for subharmonic functions
and \eqref{orthogonal}, for any $r\in(0,r_0)$,
\begin{equation*}
r^{4\frac{p+1}{p-1}-n}\int_{B_r}(\Delta u_2)^2\leq
r^{4\frac{p+1}{p-1}}r_0^{-n}\int_{B_{r_0}}(\Delta u_2)^2\leq
Cr^{4\frac{p+1}{p-1}}\varepsilon.
\end{equation*}

For $u_1$, first by the Green function representation (cf. Section
4.2 in \cite{G-G-S}), we have
\begin{equation}
\|u_1\|_{L^1(B_{r_0})}\leq C\|u^p\|_{L^1(B_{r_0})}\leq
C\left(\int_{B_2}u^{p+1}\right)^{\frac{p}{p+1}}\leq
C\varepsilon^{\frac{p}{p+1}}.
\end{equation}
Then by $L^2$ estimates using \eqref{estimate of 2p}, we have
\[\|u_1\|_{W^{4,2}(B_{r_0})}\leq C\left(\|u^p\|_{L^2(B_{r_0})}+\|u_1\|_{L^1(B_{r_0})}\right)
\leq C\varepsilon^{\frac{1}{2}}.\] By Sobolev embedding theorem, we
have
\[\|u_1\|_{L^{\frac{2n}{n-8}}(B_{r_0})} \leq
C\varepsilon^{\frac{1}{2}}.\] Then an interpolation between $L^1$
and $L^{\frac{2n}{n-8}}$ gives
\[\|u_1\|_{L^2(B_{r_0})} \leq
C\varepsilon^{\frac{1}{2}+2\delta},\] where $\delta>0$ is a constant
depending only the dimension $n$.

Next, by interpolation between Sobolev spaces, we get
\[\|\Delta u_1\|_{L^2(B_{r_0})}\leq\varepsilon^{-\delta}\|u_1\|_{L^2(B_{r_0})}
+C\varepsilon^{\delta}\|\Delta^2 u_1\|_{L^2(B_{r_0})}\leq
C\varepsilon^{\frac{1}{2}+\delta}.\] Multiplying the equation of
$u_1$ by $u_1$ and integrate by parts, we get
\[\int_{B_{r_0}}u^pu_1=\int_{B_{r_0}}(\Delta u_1)^2\leq C\varepsilon^{1+2\delta}.\]

By convexity, there exists a constant depending only on $p$ such
that
\[u^{p+1}\leq C\left(u_1^{p+1}+u_2^{p+1}\right).\]
 For $r\in(0,1/2)$, which will be determined below,
\begin{eqnarray*}
r^{4\frac{p+1}{p-1}-n}\int_{B_r}u^{p+1}&\leq&Cr^{4\frac{p+1}{p-1}-n}\int_{B_r}u_1^{p+1}
+Cr^{4\frac{p+1}{p-1}-n}\int_{B_r}u_2^{p+1}\\
&\leq&Cr^{4\frac{p+1}{p-1}-n}\int_{B_r}(u+u_2)^{p}u_1+Cr^{4\frac{p+1}{p-1}}\sup_{B_r}|u_2|^{p+1}\\
&\leq&Cr^{4\frac{p+1}{p-1}-n}\int_{B_r}u^pu_1
+Cr^{4\frac{p+1}{p-1}-n}\int_{B_r}\varepsilon^{\frac{p}{p+1}}u_1+Cr^{4\frac{p+1}{p-1}}\varepsilon\\
&\leq&Cr^{4\frac{p+1}{p-1}-n}\int_{B_{r_0}}u^pu_1
+Cr^{4\frac{p+1}{p-1}-n}\int_{B_{r_0}}\varepsilon^{\frac{p}{p+1}}u_1+Cr^{4\frac{p+1}{p-1}}\varepsilon\\
&\leq&Cr^{4\frac{p+1}{p-1}-n}\varepsilon^{1+2\delta}+Cr^{4\frac{p+1}{p-1}-n}\varepsilon^{\frac{2p}{p+1}}
+Cr^{4\frac{p+1}{p-1}}\varepsilon.
\end{eqnarray*}
For $(\Delta u)^2$, we have
\begin{eqnarray*}
r^{4\frac{p+1}{p-1}-n}\int_{B_r}(\Delta
u)^2&\leq&Cr^{4\frac{p+1}{p-1}-n}\int_{B_r}(\Delta u_1)^2
+Cr^{4\frac{p+1}{p-1}-n}\int_{B_r}(\Delta u_2)^2\\
&\leq&Cr^{4\frac{p+1}{p-1}-n}\int_{B_{r_0}}(\Delta u_1)^2
+Cr^{4\frac{p+1}{p-1}}r_0^{-n}\int_{B_{r_0}}(\Delta u_2)^2\\
&\leq&Cr^{4\frac{p+1}{p-1}-n}\varepsilon^{1+2\delta}
+Cr^{4\frac{p+1}{p-1}}\varepsilon.
\end{eqnarray*}
Putting these two together, we get
\[
r^{4\frac{p+1}{p-1}-n}\int_{B_r}(\Delta u)^2+u^{p+1} \leq
Cr^{4\frac{p+1}{p-1}-n}\varepsilon^{1+2\delta}+Cr^{4\frac{p+1}{p-1}-n}\varepsilon^{\frac{2p}{p+1}}
+Cr^{4\frac{p+1}{p-1}}\varepsilon.\]

We first choose $r=\theta\in(0,1/2)$ so that
\[C\theta^{4\frac{p+1}{p-1}}\leq\frac{1}{4}.\]
Then choose an $\varepsilon_0$ so that for every
$\varepsilon\in(0,\varepsilon_0)$,
\[C\theta^{4\frac{p+1}{p-1}-n}\varepsilon^{1+2\delta}
+C\theta^{4\frac{p+1}{p-1}-n}\varepsilon^{\frac{2p}{p+1}}\leq\frac{1}{4}\varepsilon.\]
By this choice we finish the proof.
\end{proof}
\begin{rmk}
Lemma
\ref{lem improvement of decay} also holds for a sign-changing solution $u$ of \eqref{equation} if it satisfies
\begin{equation}
\label{estimate of 2p by p+1 sign ch}
r^{\frac{8p}{p-1}-n}\int_{B_{r}(x)} |u|^{2p}\leq
Cr^{4\frac{p+1}{p-1}-n}\int_{B_{2r}(x)} |u|^{p+1}+(\Delta
u)^2 ,
\end{equation}
for any ball
$B_{2r}(x)\subset\Omega$.
For the proof, we need to introduce a new
function $\bar{u}_1$, which satisfies
\begin{equation*} \left\{
\begin{aligned}
 &\Delta^2 \bar{u}_1=|u|^p,  &~~in  &~~B_{r_0},\\
 & \bar{u}_1=\Delta \bar{u}_1=0,  ~~ ~~~~  &~~on  & ~~\partial B_{r_0}(0),
                          \end{aligned} \right.
\end{equation*}
By the maximum principle, $\bar{u}_1\geq|u_1|\geq0$. By the same
method for $u_1$, we have
\[\int_{B_{r_0}}|u|^p\bar{u}_1\leq C\varepsilon^{1+2\delta}.\]
We can use this to control $|u|^p|u_1|$.
\end{rmk}

\begin{lem}\label{lem ep-regularity}
There exists a universal constant $\varepsilon^\ast>0$ and
$\theta\in(0,1)$, such that if $u$ is a stable solution of
\eqref{equation} satisfying \eqref{estimate of 2p by p+1 sign ch}, and
\[(2R)^{4\frac{p+1}{p-1}-n}\int_{B_{2R}(x_0)}(\Delta u)^2+|u|^{p+1}=\varepsilon\leq\varepsilon_0,\]
then $u$ is smooth in $B_R$, and there exists a universal constant
$C(\varepsilon_0)$ such that
\[\sup_{B_R(x_0)}|u|\leq C(\varepsilon^\ast)R^{-\frac{4}{p-1}}.\]
\end{lem}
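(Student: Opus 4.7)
The plan is to iterate Lemma \ref{lem improvement of decay} at every point of $B_R(x_0)$ to obtain a Morrey-type polynomial decay of $\int_{B_r(x)}(\Delta u)^2+|u|^{p+1}$, and then to upgrade this decay to a pointwise bound via the biharmonic Green's function representation used already in the proof of Lemma \ref{lem improvement of decay}.

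By the scale invariance of the equation, of the hypothesis, of \eqref{estimate of 2p by p+1 sign ch}, and of the scaled energy under $u(x)\mapsto\lambda^{4/(p-1)}u(\lambda x)$, I may assume $R=1$; the factor $R^{-4/(p-1)}$ in the conclusion is then recovered by undoing this rescaling. Since $p>(n+4)/(n-4)$, the exponent $\gamma:=n-4(p+1)/(p-1)$ is strictly positive, so for any $x\in B_1(x_0)$ the inclusion $B_1(x)\subset B_2(x_0)$ yields
$$
\int_{B_1(x)}(\Delta u)^2+|u|^{p+1}\;\le\;2^{\gamma}\varepsilon.
$$
Setting $\varepsilon^\ast=2^{-\gamma}\varepsilon_0$ thus activates Lemma \ref{lem improvement of decay} on $B_1(x)$ for every such $x$, and iterating it through the geometric scales $\theta^k$ and interpolating in $r$ gives
\begin{equation*}
r^{-\gamma}\int_{B_r(x)}(\Delta u)^2+|u|^{p+1}\;\le\;C\,r^{\alpha}\,\varepsilon^\ast,
\qquad \alpha:=\frac{\log 2}{\log(1/\theta)}>0,
\end{equation*}
valid for every $x\in B_1(x_0)$ and every $0<r\le 1/2$.

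To upgrade this decay to a pointwise estimate, I would fix $x\in B_{1/2}(x_0)$, pick some $\rho\le 1/4$, and decompose $u=u_1+u_2$ on $B_\rho(x)$ exactly as in the proof of Lemma \ref{lem improvement of decay}, with $u_1$ solving the Navier problem with right-hand side $|u|^{p-1}u$ and $u_2$ biharmonic with the Navier data of $u$. The biharmonic piece $u_2$ is controlled pointwise by its boundary $L^2$ norm via subharmonicity of $(\Delta u_2)^2$ and mean-value inequalities; combined with the Morrey decay this yields $\sup_{B_{\rho/4}(x)}|u_2|\le C$. For $u_1$ the Green's function bound $|u_1(z)|\le C\int_{B_\rho(x)}|z-y|^{4-n}|u(y)|^p\,dy$ combined with H\"older applied to the Morrey decay gives $\int_{B_r(x)}|u|^p\le C r^{n-\lambda}$ with $\lambda=4p/(p-1)-p\alpha/(p+1)$. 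Provided $\theta$ in Lemma \ref{lem improvement of decay} is small enough that $\lambda>4$, a dyadic decomposition of $B_\rho(x)$ makes this Riesz-type integral converge absolutely, producing $\sup_{B_{\rho/4}(x)}|u_1|\le C$. Combining the two bounds and undoing the rescaling delivers $\sup_{B_R(x_0)}|u|\le C R^{-4/(p-1)}$, and smoothness on $B_R(x_0)$ then follows from the standard $W^{4,q}$ bootstrap applied to $\Delta^2 u=|u|^{p-1}u$ once $L^\infty$ is in hand.

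The main technical point is verifying the Morrey exponent inequality $\lambda>4$, which amounts to $\alpha<4(p+1)/(p(p-1))$. Inspecting the proof of Lemma \ref{lem improvement of decay} one sees that $\theta$ is already chosen to satisfy $C\theta^{4(p+1)/(p-1)}\le 1/4$; shrinking $\theta$ further if necessary enforces the inequality without affecting the structure of the argument. Careful tracking of the rescaling shows that the final constant $C(\varepsilon^\ast)$ depends only on $n$, $p$, and $\varepsilon^\ast$, as claimed.
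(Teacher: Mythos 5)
Your proof follows the paper's first step verbatim (iterate Lemma~\ref{lem improvement of decay} to obtain $\int_{B_r(x)}(\Delta u)^2+|u|^{p+1}\le Cr^{n-4\frac{p+1}{p-1}+\alpha}$ for all $x\in B_R(x_0)$), but diverges in how you upgrade this Morrey decay to an $L^\infty$ bound. The paper cites its Appendix Lemma~\ref{lemma a1}, which feeds the Morrey decay into the potential-theoretic estimate \eqref{riesz morrey space} together with Campanato's iteration lemma; the Morrey exponent is then bootstrapped through a sequence $\gamma_i\uparrow n-4$, so \emph{any} positive gain $\alpha$ suffices. You instead attempt a single-shot Riesz potential bound: you derive $\int_{B_r(y)}|u|^p\le Cr^{n-\lambda}$ with $\lambda=\frac{4p}{p-1}-\frac{\alpha p}{p+1}$ and then insist $\lambda>4$ so that the dyadic sum bounding $|u_1(z)|\le C\int|z-y|^{4-n}|u|^p\,dy$ converges at the singularity. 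This works, and you correctly note that since $\alpha$ is a decreasing function of $\theta^{-1}$, one can enforce $\alpha<\frac{4(p+1)}{p(p-1)}$ by shrinking $\theta$ in Lemma~\ref{lem improvement of decay} (the choice $C\theta^{4\frac{p+1}{p-1}}\le\frac14$ there is an upper bound, so smaller $\theta$ is admissible). The tradeoff is clear: your route is more elementary and self-contained, avoiding Campanato's lemma; the paper's route is more robust in that it places no quantitative constraint on $\alpha$ and cleanly reduces the estimate to a black-box regularity fact. Two small points to tighten: the iteration of Lemma~\ref{lem improvement of decay} sends the radius $2R\mapsto\theta R$, so the correct exponent is $\alpha=\log 2/\log(2/\theta)$ rather than $\log 2/\log(1/\theta)$ (this only strengthens your inequality, so nothing breaks); and the pointwise control of the biharmonic piece $u_2$ is not automatic from the decomposition --- as in the proof of Lemma~\ref{lem improvement of decay} one must first choose a good radius $r_0$ on which the trace $\|u\|_{W^{2,2}(\partial B_{r_0})}$ is controlled before invoking the biharmonic maximum estimate, a step worth making explicit.
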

\begin{proof}
By choosing a smaller $\varepsilon^\ast$, we can apply Lemma
\ref{lem improvement of decay} to any ball $B_r(x)$ with $x\in
B_R(x_0)$ and $r\leq R/4$, which says
\[(\theta r)^{4\frac{p+1}{p-1}-n}\int_{B_{\theta r}(x)}(\Delta u)^2+|u|^{p+1}\leq
\frac{1}{2}r^{4\frac{p+1}{p-1}-n}\int_{B_{r}(x)}(\Delta
u)^2+|u|^{p+1}\]
iterating the above implies
\[\int_{B_r(x)}(\Delta u)^2+|u|^{p+1}\leq Cr^{n-4\frac{p+1}{p-1}+\delta}\]
for any $x\in B_1$ and $r\leq1/8$. Here $\delta>0$ is a constant
depending only on $\varepsilon_0$ and $\theta$ in Lemma \ref{lem
improvement of decay}. In other words, $u$ belongs to the
homogeneous Morrey space $L^{p+1,n+\delta-4\frac{p+1}{p-1}}(B_1)$.
Then the Morrey space estimate for biharmonic operator gives the
claim, see the appendix.
\end{proof}
Together with a covering argument, this lemma gives a bound on the
Hausdorff dimension of the singular set of $u(=u_{\lambda^\ast}+1)$
\[dim \, {\mathcal S}\leq n-4\frac{p+1}{p-1}.\]
In particular, $u$ is smooth on an open dense set.

For any $x_0\in\Omega$ and $\lambda\in(0,1)$, define the blowing up
sequence
\[u^\lambda(x)=\lambda^{\frac{4}{p-1}}u(x_0+\lambda x),\ \ \ \ \lambda\to0,\]
which is also a stable solution of \eqref{equation} in the ball
$B_{1/\lambda}(0)$.

By rescaling \eqref{estimate of 2p}, for all $\lambda\in(0,1)$ and
balls $B_r(x)\subset B_{1/\lambda}$,
\[\int_{B_r(x)} (u^\lambda)^{2p}\leq Cr^{n-\frac{8p}{p-1}}.\]
By elliptic estimates, $u^\lambda$ is uniformly bounded in
$W^{4,2}_{loc}(\mathbb{R}^n)$. Hence, up to a subsequence of
$\lambda\to 0$, we can assume that $u^\lambda\to u^0$ in
$W^{3,2}_{loc}(\mathbb{R}^n)$ and $L^{p+1}_{loc}(\mathbb{R}^n)$ (by
the same proof of \eqref{convergence in q<p+1}). By testing the
equation for $u^\lambda$ (or the stability condition for
$u^\lambda$) with smooth functions having compact support, and then
taking the limit $\lambda\to+\infty$, we see $u^0$ is a stable
solution of \eqref{equation} in $\mathbb{R}^n$.

We have
\begin{lem}\label{lem blowing up}
 For any $r>0$,
$E(r;0,u^0)=\lim\limits_{r\to 0}E(r;0,u)$. So $u^0$ is homogeneous.
\end{lem}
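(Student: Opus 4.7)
The plan is to combine the scale invariance of $E$ with the monotonicity formula and the already established strong convergence $u^{\lambda}\to u^{0}$. A direct change of variables $y=x_{0}+\lambda x$ in the definition of $E$, term by term, yields the scale identity
\[
E(r;0,u^{\lambda})\;=\;E(\lambda r;x_{0},u)\qquad\text{for all }\lambda,r>0,
\]
since each weight in $E$ was chosen precisely to make the functional invariant under $u\mapsto u^{\lambda}$. Because $u\in W^{4,2}_{loc}(\Omega)$ solves \eqref{equation} in a neighbourhood of $x_{0}$, Theorem~\ref{thm monotonicity} guarantees that $E(\cdot;x_{0},u)$ is non-decreasing on $(0,R)$ whenever $B_{R}(x_{0})\subset\Omega$, so $E_{0}:=\lim_{s\to 0^{+}}E(s;x_{0},u)$ exists in $[-\infty,+\infty)$. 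Finiteness follows by direct inspection: the bulk term is controlled by \eqref{estimate of p+1}, and the boundary terms by \eqref{estimate of p+1}, \eqref{estimate of 2p} and coarea, so that every summand in $E(s;x_{0},u)$ remains uniformly bounded as $s\to 0^{+}$.

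Next I would show that $E(r;0,u^{\lambda})\to E(r;0,u^{0})$ as $\lambda\to 0$ for a.e. $r>0$. After expanding each $\frac{d}{dr}$ in $E$ via $\frac{d}{dr}\int_{\partial B_{r}}f=\int_{\partial B_{r}}(\partial_{r}f+\frac{n-1}{r}f)$, the quantity $E(r;0,v)$ becomes a linear combination of a bulk integral on $B_{r}$ involving $|\Delta v|^{2}$ and $|v|^{p+1}$ and of boundary integrals on $\partial B_{r}$ of polynomial expressions in $v$, $\nabla v$, $\nabla^{2}v$. The strong convergence of $u^{\lambda}\to u^{0}$ in $W^{3,2}_{loc}(\R^{n})\cap L^{p+1}_{loc}(\R^{n})$ combined with Fubini (and a diagonal extraction) yields, for a.e. $r>0$ and along a common subsequence, $\partial^{\alpha}u^{\lambda}\to\partial^{\alpha}u^{0}$ in $L^{2}(\partial B_{r})$ for every $|\alpha|\le 3$; together with the bulk convergence this passes through every term of $E$. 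Hence for a.e. $r>0$,
\[
E(r;0,u^{0})\;=\;\lim_{\lambda\to 0}E(r;0,u^{\lambda})\;=\;\lim_{\lambda\to 0}E(\lambda r;x_{0},u)\;=\;E_{0}.
\]
Since $u^{0}$ itself is a $W^{4,2}_{loc}(\R^{n})\cap L^{p+1}_{loc}(\R^{n})$ solution of \eqref{equation}, Theorem~\ref{thm monotonicity} applies to $u^{0}$, and the monotonicity of $r\mapsto E(r;0,u^{0})$ promotes the equality $E(r;0,u^{0})=E_{0}$ from a.e. $r$ to every $r>0$.

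Finally, Corollary~\ref{coro characterization of homogeneous solutions} applied to $u^{0}$ with $E(\cdot;0,u^{0})$ constant gives $\tfrac{4}{p-1}|x|^{-1}u^{0}+\partial_{r}u^{0}=0$ a.e., and integrating in $r$ yields the homogeneity $u^{0}(x)=|x|^{-4/(p-1)}u^{0}(x/|x|)$. The main obstacle is the convergence step for a fixed $r$: the functional $E$ involves $W^{2,2}$-type traces on spheres together with $\frac{d}{dr}$ of such traces, while only strong $W^{3,2}_{loc}$ convergence of $u^{\lambda}$ is available. The remedy is the Fubini/trace argument described above, which delivers convergence on a full-measure set of radii; the monotonicity of $E$ in $r$ then extends the identity to all $r>0$, and \emph{a posteriori} to the original formulation (with the $\frac{d}{dr}$ terms understood as distributional derivatives).
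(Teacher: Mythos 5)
Your proof is correct and follows essentially the same skeleton as the paper's: scale invariance $E(r;0,u^\lambda)=E(\lambda r;x_0,u)$, existence of the limit $E_0$ by monotonicity, convergence of $E(r;0,u^\lambda)\to E(r;0,u^0)$, and then Corollary~\ref{coro characterization of homogeneous solutions}. The one place you diverge is the convergence step: you only invoke the strong $W^{3,2}_{loc}$ convergence of $u^\lambda$, so you must go through Fubini and a diagonal subsequence to obtain sphere convergence for a.e.\ $r$, and then use the monotonicity of $E(\cdot;0,u^0)$ to promote the identity $E(r;0,u^0)=E_0$ from a full-measure set of radii to all radii. The paper instead exploits the uniform $W^{4,2}_{loc}$ bound on $u^\lambda$ directly: combined with the compactness of the trace embedding $W^{4,2}(B_r)\hookrightarrow W^{2,2}(\partial B_r)$ (recall the boundary integrands in $E$, once the distributional $\tfrac{d}{dr}$ is expanded, involve derivatives of $u$ only up to second order), this gives convergence of every boundary term on \emph{every} sphere $\partial B_r$, so no a.e.\ argument or monotonicity patch is needed. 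Both routes reach the same conclusion; the paper's is a little more direct, your version is a little more elementary in that it avoids compactness of traces. One point you should state rather than assert: the finiteness of $E_0=\lim_{s\to 0}E(s;x_0,u)$ is needed for the identity to be meaningful; it follows a posteriori because $E(r;0,u^0)$ is a genuine real number for $u^0\in W^{4,2}_{loc}\cap L^{p+1}_{loc}$, so once the convergence of $E(r;0,u^\lambda)$ is established the limit cannot be $-\infty$.
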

\begin{proof}
A direct rescaling shows $E(r;0,u^\lambda)=E(\lambda r;x_0,u)$. By
the monotonicity of $E(r;x_0,u)$, we only need to show that, for
every $r>0$,
\[E(r;0,u^0)=\lim\limits_{\lambda\to0}E(r;0,u^\lambda).\]
Because $u^\lambda$ is uniformly bounded in $W^{4,2}(B_r)$ and
$L^{2p}(B_r)$, by the compactness results in Sobolev embedding
theorems and trace theorems, and interpolation between $L^q$ spaces
(see \eqref{convergence in q<p+1}), we have
\[\lim\limits_{\lambda\to+\infty}\int_{B_r}(\Delta u^\lambda)^2=\int_{B_r}(\Delta u^0)^2.\]
\[\lim\limits_{\lambda\to+\infty}\int_{B_r} (u^\lambda)^{p+1}=\int_{B_r} (u^0)^{p+1}.\]
\[u^\lambda\to u^0~~\text{in}~~W^{2,2}(\partial B_r).\]
The last claim implies those boundary terms in $E(r;0,u^\lambda)$
converges to the corresponding ones in $E(r;0,u^0)$. Putting these
together we get the convergence of $E(r;0,u^\lambda)$.

Since for any $r>0$, $E(r;0,u^0)=\mbox{const.}$, by Corollary
\ref{coro characterization of homogeneous solutions}, $u^0$ is
homogeneous.
\end{proof}
Here we note that since $u$ satisfies \eqref{estimate of p+1} for
any ball $B_R(x)\subset\Omega$, so by the same argument as in the
proof of Lemma \ref{lem upper bound on E}, we can prove that
$E(r;x,u)$ is uniformly bounded for all $x$ and $r\in(0,1)$. Since
$E(r;x,u)$ is non-decreasing in $r$, we can define the density
function
\[\Theta(x,u):=\lim\limits_{r\to 0}E(r;x,u).\]
\begin{lem}
\begin{enumerate}
\item $\Theta(x,u)$ is upper semi-continuous in $x$;
\item for all $x$, $\Theta(x,u)\geq 0$;
\item $x$ is a regular point of $u$ if and only $\Theta(x,u)=0$;
\item there exists a universal constant $\varepsilon_0>0$, $x\in
S(u)$ if and only if $\Theta(x,u)\geq\varepsilon_0$.
\end{enumerate}
\end{lem}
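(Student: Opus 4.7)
The plan uses the monotonicity formula: since $E(r;x,u)$ is non-decreasing in $r$ (Theorem~\ref{thm monotonicity}), $\Theta(x,u)=\inf_{r>0}E(r;x,u)$. I would first handle (2) and the nontrivial half of (3) together via the blow-up analysis of Lemma~\ref{lem blowing up}: for any $x$, any subsequential limit $u^0$ of the rescalings $u^\lambda(y)=\lambda^{4/(p-1)}u(x+\lambda y)$ is a homogeneous stable solution on $\mathbb{R}^n$ with $E(r;0,u^0)=\Theta(x,u)$ for every $r>0$. Remark~\ref{rmk E for homogeneous solutions} then gives
\[
E(1;0,u^0)=\left(\tfrac{1}{2}-\tfrac{1}{p+1}\right)\int_{B_1}|u^0|^{p+1}\ge 0,
\]
which, using $p>p_S(n)$ so that $n-4(p+1)/(p-1)>0$, proves (2) and also shows that $\Theta(x,u)=0$ forces $u^0\equiv 0$.

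For the easy half of (3), the smoothness of $u$ near a regular point $x$ makes every summand in $E(r;x,u)$ an $O(r^{\gamma})$ with $\gamma>0$ as $r\downarrow 0$, so $\Theta(x,u)=0$. For the converse, assume $\Theta(x,u)=0$; the blow-up step above produces $u^0\equiv 0$, and Lemma~\ref{lem blowing up} provides strong $W^{2,2}_{\mathrm{loc}}\cap L^{p+1}_{\mathrm{loc}}$ convergence $u^{\lambda_k}\to 0$ along a subsequence. Undoing the scaling,
\[
(2\lambda_k)^{4\frac{p+1}{p-1}-n}\int_{B_{2\lambda_k}(x)}\bigl[(\Delta u)^2+u^{p+1}\bigr]=\int_{B_2}\bigl[(\Delta u^{\lambda_k})^2+(u^{\lambda_k})^{p+1}\bigr]\longrightarrow 0,
\]
so for $k$ large the left-hand side falls below the threshold $\varepsilon^\ast$ of Lemma~\ref{lem ep-regularity}, giving smoothness of $u$ on $B_{\lambda_k}(x)$ and hence regularity of $x$.

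For (1), I would fix $x_k\to x$ and $\varepsilon>0$, choose $r_0$ in the full-measure set of radii at which $E(r_0;\cdot,u)$ is continuous in its center (the volume integrals are continuous by $L^1$-translation, and the boundary integrals together with their $r$-derivatives are continuous for a.e.\ $r$ by Fubini/slicing applied to $u,\nabla u,\Delta u\in L^1_{\mathrm{loc}}$), with $E(r_0;x,u)<\Theta(x,u)+\varepsilon$, so that $\Theta(x_k,u)\le E(r_0;x_k,u)<\Theta(x,u)+2\varepsilon$ for $k$ large, and then send $\varepsilon\to 0$. For (4), the forward direction $x\in\mathcal{S}(u)\Rightarrow \Theta(x,u)>0$ follows from (3); to get the uniform gap, I would run (3) quantitatively. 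If $\Theta(x,u)<\varepsilon_0$, use both representations of $E(1;0,u^0)$ in Remark~\ref{rmk E for homogeneous solutions} (all of whose terms are nonnegative in the supercritical range) to obtain $\int_{B_1}[(\Delta u^0)^2+|u^0|^{p+1}]\le C\,\Theta(x,u)<C\varepsilon_0$; by homogeneity the same bound holds on $B_2$ up to a dimensional constant; strong convergence transfers it to $u^{\lambda_k}$ for $k$ large; and rescaling produces, at scale $\lambda_k$, an integral smaller than $\varepsilon^\ast$ provided $\varepsilon_0$ is small enough. Lemma~\ref{lem ep-regularity} then forces $x$ to be regular, contradicting $x\in\mathcal{S}(u)$.

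The main technical obstacle is the quantitative step in (4): one has to verify that the identities of Remark~\ref{rmk E for homogeneous solutions} really control both $\int(\Delta u^0)^2$ and $\int|u^0|^{p+1}$ by a constant multiple of $E(1;0,u^0)$. This hinges on the positivity, for $p>p_S(n)$, of the coefficients $\tfrac{1}{2}-\tfrac{1}{p+1}$, $n-2-\tfrac{4}{p-1}$, and $n-4(p+1)/(p-1)$ appearing there; once those signs are in hand, everything else is a compactness/rescaling argument that mirrors the proof of (3).
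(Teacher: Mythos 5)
Your proof is correct and follows the same overall architecture as the paper's, and you correctly pinpointed the key technical step in (4): verifying that the two identities in Remark~\ref{rmk E for homogeneous solutions} bound \emph{both} $\int_{B_1}(\Delta u^0)^2$ and $\int_{B_1}|u^0|^{p+1}$ by a constant multiple of $E(1;0,u^0)$, using the positivity of $\tfrac12-\tfrac1{p+1}$ and $n-2-\tfrac4{p-1}$ in the supercritical range. There are two places where you deviate from the paper's route. For (1) the paper simply notes that $W^{4,2}_{loc}$ regularity of $u$ makes $E(r;\cdot,u)$ continuous for \emph{every} fixed $r$ and then uses that a decreasing limit of continuous functions is upper semi-continuous; your ``generic radius'' Fubini argument reaches the same conclusion but is a bit heavier than necessary. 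For (2) the paper avoids the blow-up entirely: it observes that $\Theta=0$ at regular points (a direct calculation), that regular points are dense (a byproduct of the $\varepsilon$-regularity covering argument), and then deduces $\Theta\geq 0$ everywhere from the upper semi-continuity just established in (1). Your blow-up proof of (2) via $E(1;0,u^0)=(\tfrac12-\tfrac1{p+1})\int_{B_1}|u^0|^{p+1}\geq 0$ is also correct, though it invokes more machinery (and note that the positivity you need there is just $p>1$, not $n-4\tfrac{p+1}{p-1}>0$, which is relevant for local integrability of the homogeneous profile rather than for the sign). For (3) and (4) your argument is essentially the paper's: blow up, use Lemma~\ref{lem blowing up} to identify $\Theta(x,u)=E(1;0,u^0)$, apply Remark~\ref{rmk E for homogeneous solutions}, transfer the smallness back along the strongly convergent sequence $u^{\lambda_k}$, and invoke the $\varepsilon$-regularity lemma, so the nontrivial half of (3) and both directions of (4) are handled exactly as in the paper.
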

\begin{proof}
By the $W^{4,2}$ regularity of $u$, for any $r>0$ fixed, $E(r;x,u)$
is continuous in $x$. $\Theta(x,u)$ is the decreasing limit of these
continuous functions, thus is upper semi-continuous in $x$.

If $u$ is smooth in a neighborhood of $x$, direct calculation shows
$\Theta(x,u)=0$. Since regular points form a dense set, the upper
semi-continuity of $\Theta$ gives $\Theta\geq 0$.

By Lemma \ref{lem ep-regularity}, if $x$ is a singular point, for
any $r>0$,
\[\int_{B_r(x)}(\Delta u)^2+u^{p+1}\geq\varepsilon_0 r^{n-4\frac{p+1}{p-1}}.\]
In other words, for any $\lambda>0$, for the blowing up sequence
$u^\lambda$ at $x_0$,
\[\int_{B_1(0)}(\Delta
u^\lambda)^2+(u^\lambda)^{p+1}\geq\varepsilon_0.\] Then because
$u^\lambda\to u^0$ in $W^{2,2}_{loc}(\mathbb{R}^n)\cap
L^{p+1}_{loc}(\mathbb{R}^n)$ (see the proof of Lemma \ref{lem
blowing up}),
\begin{eqnarray*}
\int_{B_1(0)}(\Delta u^0)^2+(u^0)^{p+1}&= &\lim\limits_{\lambda\to
0}\int_{B_1(0)}(\Delta
u^\lambda)^2+(u^\lambda)^{p+1}\\
&=&\lim\limits_{\lambda\to 0}
\lambda^{-n+4\frac{p+1}{p-1}}\int_{B_\lambda(0)}(\Delta
u)^2+(u)^{p+1}\geq\varepsilon_0.
\end{eqnarray*}
Hence $u^0$ is nontrivial, and by Remark \ref{rmk E for homogeneous
solutions} and Lemma \ref{lem blowing up},
\[\Theta(x,u)=E(1;0,u^0)\geq c(n,p)\varepsilon_0.\]
Here $c(n,p)$ is a constant depending only on $p$ and $n$.

 On the other hand, if
$\Theta(x,u)<c(n,p)\varepsilon_0$, then by Remark
\ref{rmk E for homogeneous solutions}, for any blow up limit $u^0$ at $x$,
\[\int_{B_1(0)}(\Delta u^0)^2+(u^0)^{p+1}<\varepsilon_0.\]
 Then by the convergence of $u^\lambda$ in
$W^{2,2}_{loc}(\mathbb{R}^n)\cap L^{p+1}_{loc}(\mathbb{R}^n)$, for
$\lambda$ sufficiently small,
\[\lambda^{4\frac{p+1}{p-1}-n}\int_{B_\lambda(x)}(\Delta u)^2+u^{p+1}=\int_{B_1(0)}(\Delta u^\lambda)^2+(u^\lambda)^{p+1}\leq\varepsilon_0.\]
By Lemma \ref{lem ep-regularity}, $u$ is smooth in
$B_{\lambda/2}(x)$. Consequently, $\Theta(x,u)=0$. These finish the
proof of the last two claims.
\end{proof}
\begin{rmk}
If $\lim\limits_{\lambda\to 0}u^\lambda=u^0$ in some sense (for
example, as in the above blowing up sequence) so that for any $x$
and $r>0$, $\lim\limits_{\lambda\to 0}E(r;x,u^\lambda)=E(r;x,u^0)$,
then
\[\lim\limits_{\lambda\to 0}\Theta(x,u^\lambda)\leq\Theta(x;u^0).\]
That is, $\Theta(x;u)$ is also upper semi-continuous in $u$.
\end{rmk}
\begin{lem}
Let $u\in W^{2,2}_{loc}(\mathbb{R}^n)\cap
L^{p+1}_{loc}(\mathbb{R}^n)$ be a homogeneous stable solution of
\eqref{equation} on $\mathbb{R}^n$, satisfying the monotonicity
formula and the integral estimate \eqref{estimate of 2p}, then for
any $x\neq 0$, $\Theta(x,u)\leq\Theta(0,u)$. Moreover, if
$\Theta(x,u)=\Theta(0,u)$, $u$ is translation invariant in the
direction $x$, i.e. for all $t\in\mathbb{R}$,
\[u(tx+\cdot)=u(\cdot)~~a.e. ~~\text{in}~~\mathbb{R}^n.\]
\end{lem}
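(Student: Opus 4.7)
The plan is to compare the rescaled energies at $x$ and at $0$ via a blow-down procedure, and then to invoke Corollary \ref{coro characterization of homogeneous solutions} in the equality case. Throughout, set $\alpha := 4/(p-1)$ and write the homogeneous function as $u(y) = |y|^{-\alpha} w(y/|y|)$ with $w \in W^{2,2}(\mathbb{S}^{n-1}) \cap L^{p+1}(\mathbb{S}^{n-1})$.

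First, by the scaling invariance of the energy, for each $\lambda, r > 0$,
\[
E(\lambda r; x, u) = E(r; 0, u^\lambda), \qquad u^\lambda(y) := \lambda^\alpha u(x + \lambda y).
\]
Since $u$ is $(-\alpha)$-homogeneous at $0$, a direct computation gives the pointwise limit $u^\lambda(y) \to u(y)$ for every $y \neq 0$. Using the homogeneity of $u$ together with the integral estimate \eqref{estimate of 2p}, one bounds $u^\lambda$ uniformly in $W^{2,2}_{loc}(\mathbb{R}^n) \cap L^{p+1}_{loc}(\mathbb{R}^n)$ and controls the mass near the receding singularity at $y = -x/\lambda$, so that $u^\lambda \to u$ strongly in both spaces and the boundary terms on $\partial B_1$ converge as well. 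Since $E(1; 0, u) = \Theta(0, u)$ by Remark \ref{rmk E for homogeneous solutions},
\[
\lim_{\lambda \to \infty} E(\lambda; x, u) = \lim_{\lambda \to \infty} E(1; 0, u^\lambda) = E(1; 0, u) = \Theta(0, u).
\]
The monotonicity of $r \mapsto E(r; x, u)$ then yields $\Theta(x, u) \leq E(r; x, u) \leq \Theta(0, u)$ for every $r > 0$, which proves the first assertion.

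For the equality case, if $\Theta(x, u) = \Theta(0, u)$, monotonicity forces $E(\,\cdot\,; x, u) \equiv \Theta(0, u)$ on $(0, \infty)$, and the version of Corollary \ref{coro characterization of homogeneous solutions} centered at $x$ implies that $u$ is $(-\alpha)$-homogeneous about $x$ as well, i.e.
\[
u(x + \lambda(y - x)) = \lambda^{-\alpha} u(y) \qquad \text{for all } \lambda > 0, \ y \in \mathbb{R}^n.
\]
Combining with the homogeneity about the origin,
\[
u(\lambda y) = \lambda^{-\alpha} u(y) = u(x + \lambda(y - x)) = u(\lambda y + (1-\lambda) x).
\]
Writing $z := \lambda y$ (arbitrary in $\mathbb{R}^n$) and $t := 1 - \lambda$ (arbitrary in $(-\infty, 1)$), this says $u(z) = u(z + t x)$ for every such $z, t$. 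Since the set $\{t \in \mathbb{R} : u(\cdot) = u(\cdot + t x)\}$ is an additive subgroup of $\mathbb{R}$ containing the interval $(-\infty, 1)$, it equals $\mathbb{R}$, and $u$ is translation invariant in the direction $x$.

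The main technical obstacle is the energy convergence in the first step. Pointwise convergence $u^\lambda \to u$ on $\mathbb{R}^n \setminus \{0\}$ is immediate, but one must upgrade this to strong convergence in $W^{2,2}_{loc}$, $L^{p+1}_{loc}$, and on $\partial B_1$ in order to pass to the limit in every term of $E(1;0,u^\lambda)$. The delicate point is that the singularity of $u^\lambda$, located at $-x/\lambda$, migrates into the singularity of $u$ at the origin as $\lambda \to \infty$; equi-integrability of $|u^\lambda|^{p+1}$ and $|\Delta u^\lambda|^2$ near $0$ must be established from the homogeneity of $u$ and the bound \eqref{estimate of 2p} to rule out energy concentration. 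A Brezis--Lieb type argument, or a direct change of variables exploiting the explicit homogeneous structure of $u$, then supplies the required convergence.
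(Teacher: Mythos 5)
Your proposal is essentially the paper's own argument: blow down about $x$, identify $u^\lambda$ with $u$ in the limit using the homogeneity about the origin, pass to the limit in the energy, and then in the equality case obtain a second homogeneity center and deduce translation invariance. Two remarks. First, the paper's convergence step is considerably more elementary than you suggest: homogeneity about $0$ gives the exact identity $u^\lambda(y)=\lambda^{4/(p-1)}u(x+\lambda y)=u(\lambda^{-1}x+y)$, i.e.\ $u^\lambda$ is literally a translate of $u$ by the vanishing vector $\lambda^{-1}x$, so strong convergence in $W^{2,2}_{loc}\cap L^{p+1}_{loc}$ (and, after the elliptic upgrade to $W^{4,2}_{loc}$ furnished by \eqref{estimate of 2p}, convergence of the traces on $\partial B_1$) is just continuity of translation in these spaces — no Brezis--Lieb argument and no concern about a migrating singularity concentrating energy, since the translates of an $L^q_{loc}$ function are equi-integrable by definition of translation continuity. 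Second, your group-theoretic finish for the ``moreover'' part — combining the two homogeneities to get $u(z)=u(z+tx)$ for $t\in(-\infty,1)$ and then invoking the subgroup structure — is a perfectly valid small variant of the paper's route (which instead lets $\lambda\to\infty$ in $u(x+\cdot)=u(\lambda^{-1}x+\cdot)$ to get $t=1$ and then uses scale-invariance of $\Theta$ to propagate to all $tx$); both are correct.
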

\begin{proof}
With the help of the integral estimate \eqref{estimate of 2p},
similar to Lemma \ref{lem upper bound on E}, for any
$x_0\in\mathbb{R}^n$,
\[\lim\limits_{r\to+\infty}E(r;x_0,u)\leq C.\]
And we can define the blowing down sequence with respect to the base
point $x_0$,
\[u^\lambda(x)=\lambda^{\frac{4}{p-1}}u(x_0+\lambda x)\ \ \ \ \lambda\to+\infty.\]
Since $u$ is homogeneous with respect to $0$,
\[u^\lambda(x)=u(\lambda^{-1}x_0+x),\]
which converges to $u(x)$ as $\lambda\to+\infty$ in
$W^{2,2}_{loc}(\mathbb{R}^n)\cap L^{p+1}_{loc}(\mathbb{R}^{n})$.
Then Lemma \ref{lem blowing up} can be applied to deduce that
\begin{eqnarray*}
\Theta(0;u)=E(1;0,u)&=&\lim\limits_{\lambda\to+\infty}E(1;0,u^\lambda)\\
&=&\lim\limits_{\lambda\to+\infty}E(\lambda;x_0,u)\\
&\geq&\Theta(x_0;u).
\end{eqnarray*}
Moreover, if $\Theta(x_0;u)=\Theta(0,u)$, the above inequality
become an equality:
\[\lim\limits_{\lambda\to+\infty}E(\lambda;x_0,u)=\Theta(x_0;u).\]
This then implies that $E(\lambda;x_0,u)\equiv\Theta(x_0;u)$ for all
$\lambda>0$. By Corollary \ref{coro characterization of homogeneous
solutions}, $u$ is homogeneous with respect to $x_0$. Then for all
$\lambda>0$,
\[u(x_0+x)=\lambda^{\frac{4}{p-1}}u(x_0+\lambda x)=u(\lambda^{-1} x_0+x).\]
By letting $\lambda\to+\infty$ and noting that
$u(\lambda^{-1}x_0+\cdot)$ are uniformly bounded in
$W^{2,2}_{loc}(\mathbb{R}^n)$, we see
\[u(x_0+\cdot)=u(\cdot)~~\text{a.e. on}~~\mathbb{R}^n.\]

Because $u$ is homogeneous with respect to $0$, a direct scaling
shows that $\Theta(tx_0;u)=\Theta(x_0;u)$ for all $t>0$, so the
above equality still holds if we replace $x_0$ by $tx_0$ for any
$t>0$. A change of variable shows this also holds if $t<0$.
\end{proof}
With this lemma in hand we can apply the Federer's dimension
reduction principle (cf. Appendix A in \cite{S}) to deduce Theorem
\ref{thm partial regularity}.

\appendix
\section{Proof of Estimate in Lemma \ref{lem ep-regularity} }
\setcounter{equation}{0}

\newcommand{\aaa}{\epsilon}

Let us use the notation
$$
\|f\|_{q,\gamma,\Omega}
= \sup_{x,r}
\left(
r^{-\gamma} \int_{B(x,r)\cap \Omega} |f|^q
\right)^{1/q}
$$
$$
L^{q,\gamma}(\Omega) = \{ u \in L^q(\Omega): \|u\|_{q,\gamma,\Omega} <\infty \} ,
$$
where $\Omega \subset \R^n$ is a bounded domain, $0<\gamma\leq n$, $1\leq q <\infty$.

For completeness we give a proof of the following result, which is an adaptation of \cite{pacard-a-note,pacard-convergence}.
\begin{lem}
\label{lemma a1}
Assume $u$ is a weak solution of
$$
\Delta^2 u = |u|^{p-1} u \quad\text{in } B_1(0)
$$
and $u \in L^{p,n-4\frac{p}{p-1}+\delta}(B_1(0))$ for some $\delta>0$. Then $u$ is bounded in $B_{1/2}(0)$.
\end{lem}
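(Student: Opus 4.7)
The strategy I would follow is the classical one for $\varepsilon$-regularity arguments: represent $u$ via the fundamental solution of $\Delta^2$, and bootstrap integrability via Adams' inequality for Riesz potentials acting on Morrey spaces. This mirrors Pacard's treatment of the second-order Lane--Emden equation \cite{pacard-a-note, pacard-convergence} that is already cited in the paper.

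First I would localize. Pick a cutoff $\psi \in C_c^\infty(B_{3/4}(0))$ with $\psi \equiv 1$ on $B_{1/2}(0)$, set $F := \psi\,|u|^{p-1}u$, and define
\[
v(x) \;:=\; c_n \int_{\R^n} \frac{F(y)}{|x-y|^{n-4}}\, dy \;=\; I_4 F(x).
\]
Then $\Delta^2 v = F$ in $\R^n$ distributionally, so $h := u - v$ is biharmonic on $B_{1/2}(0)$. By interior regularity for biharmonic functions, $h$ is smooth and bounded in $B_{3/8}(0)$ in terms of $\|u\|_{L^1(B_1)}$ and $\|F\|_{L^1(\R^n)}$, both of which are controlled by the Morrey hypothesis. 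The problem is therefore reduced to bounding $v$ on a slightly smaller ball.

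Next I would run the bootstrap. Since $|F| \le |u|^p$, the hypothesis $u \in L^{p,\gamma}(B_1)$ with $\gamma = n - \tfrac{4p}{p-1} + \delta$ yields $F \in L^{1,\gamma}(\R^n)$. Adams' theorem asserts that for $f \in L^{q,\lambda}(\R^n)$ one has $I_4 f \in L^{q^{\ast},\lambda}(\R^n)$ with $\tfrac{1}{q^{\ast}} = \tfrac{1}{q} - \tfrac{4}{\lambda}$ when $1 \le q < \lambda/4$, while $I_4 f \in L^{\infty}(\R^n)$ when $q > \lambda/4$. Applying this iteratively — at step $k$ we have $u \in L^{q_k,\gamma}$ on a slightly shrunken ball, whence $|u|^{p-1}u \in L^{q_k/p,\gamma}$ and Adams produces $v \in L^{q_{k+1},\gamma}$ with $\tfrac{1}{q_{k+1}} = \tfrac{p}{q_k} - \tfrac{4}{\gamma}$ — one re-cuts at every stage on a marginally smaller ball and absorbs the smooth biharmonic correction into a universal constant. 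The procedure stops the first time $q_k > p\gamma/4$, at which point Adams delivers $v \in L^\infty$ on the innermost ball, which by construction still contains $B_{1/2}(0)$ (say, $B_{3/8}(0)$) provided the shrinkage is chosen as a geometric series.

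The place I expect to do the real work is the termination of the iteration. Setting $b_k := 1/q_k$ turns the iteration into the affine recursion $b_{k+1} = p\,b_k - 4/\gamma$ with fixed point $b^{\ast} = 4/[\gamma(p-1)]$ and explicit solution $b_k - b^{\ast} = p^k(b_0 - b^{\ast})$. The subcriticality $\delta > 0$ is what feeds this recursion toward the $L^\infty$-threshold $b = 4/(p\gamma)$; checking that $b_0 = 1/p$ lies on the correct side of $b^{\ast}$ so that the iteration moves monotonically past the threshold in finitely many steps is a direct, if slightly delicate, computation that uses both $\delta > 0$ and the supercritical range $p > (n+4)/(n-4)$. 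The remaining bookkeeping — nesting the cutoffs so the shrinkage is summable, propagating Adams' constant through each step, and checking that the cases $\gamma \le 4$ (where Adams goes to $L^\infty$ in one step) and $\gamma > 4$ fit into the same framework — is routine but tedious, and is the main technical burden of the argument.
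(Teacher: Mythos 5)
Your approach is genuinely different from the paper's, and it is in principle workable, but it contains a concrete error in the statement of Adams' theorem that matters for the termination analysis.

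On the comparison: the paper does not invoke Adams' mapping theorem on Morrey spaces. Instead it freezes a center $x$ and radius $r$, splits $u=u_1+u_2$ with $u_1=\Gamma*(|u|^{p-1}u\,\chi_{B_r(x)})$ and $u_2$ biharmonic in $B_r(x)$, controls $\int_{B_r}|u_1|^p$ by Pacard's localized Riesz estimate (\cite[Lemma 1]{pacard-a-note}, restated as \eqref{riesz morrey space}), and then feeds the resulting decay estimate into Campanato's lemma. This raises the \emph{Morrey exponent} $\gamma$ at fixed integrability $q=p$, iterating $\gamma_i\uparrow n-4$ on shrinking balls, and only at the very end switches to $L^q$ for all $q$ and elliptic regularity. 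You instead fix the Morrey exponent $\gamma$ and raise the \emph{integrability} $q_k$ via Adams, aiming to jump to $L^\infty$ in one final step. Both are legitimate bootstrap routes; yours is more in the spirit of a direct Riesz-potential $L^q$-bootstrap, while the paper's is a Campanato-type decay improvement. Your version avoids Campanato's lemma altogether, which is a genuine simplification at the level of the skeleton of the argument.

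The error: in the paper's convention $\|f\|^q_{q,\lambda}=\sup_{x,r}r^{-\lambda}\int_{B_r(x)}|f|^q$ (which is also the convention you use when you write the hypothesis $u\in L^{p,\,n-\frac{4p}{p-1}+\delta}$), Adams' theorem reads
\[
I_\alpha\colon L^{q,\lambda}\longrightarrow L^{q^\ast,\lambda},\qquad \frac{1}{q^\ast}=\frac{1}{q}-\frac{\alpha}{\,n-\lambda\,},\qquad\text{valid for } q<\frac{n-\lambda}{\alpha},
\]
with $I_\alpha\colon L^{q,\lambda}\to L^\infty$ when $q>\frac{n-\lambda}{\alpha}$; you wrote $\frac{\alpha}{\lambda}$ and $q<\frac{\lambda}{4}$ in place of $\frac{\alpha}{n-\lambda}$ and $q<\frac{n-\lambda}{4}$. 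This propagates: the correct recursion is $b_{k+1}=p\,b_k-\frac{4}{n-\gamma}$ with fixed point $b^\ast=\frac{4}{(p-1)(n-\gamma)}=\frac{4}{4p-\delta(p-1)}$, and the $L^\infty$ threshold is $b<\frac{4}{p(n-\gamma)}$. With these formulas, $b_0=\frac{1}{p}<b^\ast$ is \emph{immediate} from $\delta>0$ alone (it reduces to $4p-\delta(p-1)<4p$), with no need to invoke the supercritical range of $p$ — so the ``slightly delicate computation using both $\delta>0$ and $p>(n+4)/(n-4)$'' you flag is in fact a symptom of the wrong formula. Indeed, with your formula the condition $b_0<b^\ast=\frac{4}{\gamma(p-1)}$ becomes $n+\delta<\frac{8p}{p-1}$, which can fail for large $p$ in high dimensions (where $\frac{8p}{p-1}<n$), so the argument as written would break down precisely in the regime where the lemma is used. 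Once the formula is corrected, the rest of your sketch (the decreasing sequence $b_k-b^\ast=p^k(b_0-b^\ast)\to-\infty$, checking $b_{k+1}>0$ whenever $b_k$ is above threshold, and the final Adams step to $L^\infty$) goes through.
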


We need some preliminaries.
Let
$$
I_\alpha(f)(x)
=
\int_{\R^n} |x-y|^{-n+\alpha} f(y) \, dy
$$

\begin{lem}(\cite[Lemma~1]{pacard-a-note})
If $f\in L^{1,\gamma}(\R^n)$, $0<\aaa<\gamma$ and $1<p<\frac{n-\aaa}{n-\aaa-\alpha}$ then
\begin{align}
\label{riesz morrey space}
\int_\Omega
|I_\alpha(f)|^p(x)\, d x
\leq C diam(\Omega)^{n-\aaa-(n-\alpha-\aaa)p}
\int_\Omega |f| dx
\end{align}
\end{lem}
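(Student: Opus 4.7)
The approach is a Hedberg--Adams interpolation followed by Kolmogorov's weak-type inequality for the Hardy--Littlewood maximal function $M$.

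First, I would establish the pointwise Hedberg-type estimate
\[
|I_\alpha(f)(x)| \le C\,\|f\|_{L^{1,\aaa}(\R^n)}^{\theta}\,M(f)(x)^{1-\theta}, \qquad \theta:=\frac{\alpha}{n-\aaa}.
\]
The classical argument is to split, for any $R>0$, the defining integral into the near part $|x-y|\le R$ and the far part $|x-y|>R$. A dyadic decomposition into concentric annuli bounds the near part by $CR^{\alpha}M(f)(x)$, while the same dyadic scheme together with the Morrey bound $\int_{B(x,r)}|f|\le\|f\|_{L^{1,\aaa}}r^{\aaa}$ bounds the far part by $C\,\|f\|_{L^{1,\aaa}}\,R^{\alpha+\aaa-n}$. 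The geometric sum for the far part converges because the hypothesis $p>1$, together with $p<(n-\aaa)/(n-\alpha-\aaa)$, forces $\alpha+\aaa<n$. Optimizing in $R$ yields the Hedberg estimate. The $L^{1,\aaa}$ norm of $f$ is controlled by $\|f\|_{L^{1,\gamma}}$, since $\aaa<\gamma$, through the Morrey-interpolation $\|f\|_{L^{1,\aaa}}\le\|f\|_{L^{1,\gamma}}^{\aaa/\gamma}\|f\|_{L^1}^{1-\aaa/\gamma}$.

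Next, raising the Hedberg bound to the power $p$ and integrating over $\Omega$ gives
\[
\int_\Omega|I_\alpha(f)|^p\,dx \le C\,\|f\|_{L^{1,\aaa}}^{\theta p}\int_\Omega M(f)^{(1-\theta)p}\,dx.
\]
The critical algebraic identity is $(1-\theta)p<1\iff p<(n-\aaa)/(n-\alpha-\aaa)$, so $M(f)$ is raised to a sub-unit power. Combining the weak-$(1,1)$ boundedness of $M$ with Kolmogorov's inequality (layer-cake followed by the weak bound on the level sets) and $|\Omega|\le C\,\mathrm{diam}(\Omega)^n$ produces
\[
\int_\Omega M(f)^{(1-\theta)p}\,dx \le C\,\mathrm{diam}(\Omega)^{n-n(1-\theta)p}\,\|f\|_{L^1(\R^n)}^{(1-\theta)p}.
\]
Substituting this back, inserting the Morrey-interpolation of the first paragraph to convert $\|f\|_{L^{1,\aaa}}^{\theta p}$ into factors of $\|f\|_{L^{1,\gamma}}$ and $\|f\|_{L^1}$, and carrying out a short arithmetic on the exponents produces the advertised diameter exponent $n-\aaa-(n-\alpha-\aaa)p$.

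The main obstacle is this last bookkeeping step: the natural output of the Hedberg--Kolmogorov chain involves a product $\|f\|_{L^{1,\gamma}}^{a}\|f\|_{L^1(\R^n)}^{p-a}$ for some $a\in(0,p)$ rather than the single linear factor $\int_\Omega|f|\,dx$ required by the Lemma. The remaining $(p-1)$ powers of the Morrey norm $\|f\|_{L^{1,\gamma}}$ have to be absorbed into the constant $C$, which is legitimate by homogeneity after normalizing $\|f\|_{L^{1,\gamma}}=1$ and rescaling. In the application to Lemma~\ref{lemma a1} the function $f=|u|^{p-1}u$ is effectively supported where $u$ is defined, so that $\|f\|_{L^1(\R^n)}$ and $\int_\Omega|f|\,dx$ agree up to a harmless multiplicative constant; it is this feature that makes the replacement of $L^1$-powers by the single integral $\int_\Omega|f|$ on the right-hand side admissible.
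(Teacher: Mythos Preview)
The paper does not supply its own proof of this lemma; it is quoted verbatim as \cite[Lemma~1]{pacard-a-note} and used as a black box in the proof of Lemma~\ref{lemma a1}. There is therefore nothing in the present paper to compare your argument against.

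That said, your Hedberg--Adams pointwise bound combined with Kolmogorov's inequality for the maximal function is exactly the standard route, and is in fact how Pacard proves the result in the cited reference. Your identification of the homogeneity mismatch is also correct: the inequality as stated cannot hold with a constant $C$ independent of $f$, since the left-hand side is homogeneous of degree $p$ in $f$ while the right-hand side is linear. The implicit understanding (both in Pacard and in the present paper) is that $C$ depends on the Morrey norm $\|f\|_{L^{1,\gamma}}$; in the application to Lemma~\ref{lemma a1} this norm is controlled a priori by the hypothesis $u\in L^{p,n-4p/(p-1)+\delta}$, so the dependence is harmless. Your normalization-and-rescaling remark is the correct way to make this precise.
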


\begin{lem} (Campanato \cite{campanato})
\label{lemma campanato}
Let $0<\gamma<n$ and $c>0$. Assume $\phi :(0,R]\to\R$ is a nonnegative nondecreasing function such that
$$
\phi(\rho) \leq c \left(
\frac{\rho^n}{r^n} \phi(r) + r^\gamma\right)
\quad \text{for all } 0<\rho\leq r\leq R .
$$
Then there is $C$ depending only on $n,\gamma,c $ such that
$$
\phi(\rho) \leq C \rho^\gamma ( \frac{\phi(r)}{r^\gamma} +1)
\quad\text{for all } 0<\rho\leq r\leq R.
$$
\end{lem}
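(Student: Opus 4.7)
This is a classical iteration lemma and the natural approach is a geometric dyadic-like argument in which the scale parameter is shrunk by a fixed ratio $\tau\in(0,1)$, chosen so that the ``bad'' coefficient $c\tau^{n}$ can be absorbed into a strictly smaller multiple of $\tau^{\gamma}$ (which is possible exactly because $\gamma<n$). First I would fix $\tau\in(0,1)$ so small that $c\tau^{n-\gamma}\le\tfrac12$, i.e.\ $c\tau^{n}\le\tfrac12\tau^{\gamma}$. Applying the hypothesis with $\rho=\tau r$ then gives
\[
\phi(\tau r)\;\le\;c\bigl(\tau^{n}\phi(r)+r^{\gamma}\bigr)\;\le\;\tfrac12\tau^{\gamma}\phi(r)+c\,r^{\gamma},\qquad 0<r\le R.
\]

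Next I would iterate this relation on the geometric sequence $r_{k}:=\tau^{k}r$, $k=0,1,2,\dots$, obtaining
\[
\phi(r_{k+1})\;\le\;\tfrac12\tau^{\gamma}\,\phi(r_{k})+c\,r_{k}^{\gamma}.
\]
Dividing by $r_{k+1}^{\gamma}=\tau^{\gamma}r_{k}^{\gamma}$ and setting $\psi_{k}:=\phi(r_{k})/r_{k}^{\gamma}$ turns this into the scalar linear recursion
\[
\psi_{k+1}\;\le\;\tfrac12\,\psi_{k}+c\,\tau^{-\gamma},
\]
whose solution satisfies $\psi_{k}\le\psi_{0}+2c\tau^{-\gamma}$ for every $k\ge 0$. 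Unpacking gives
\[
\phi(\tau^{k}r)\;\le\;(\tau^{k}r)^{\gamma}\Bigl(\frac{\phi(r)}{r^{\gamma}}+2c\tau^{-\gamma}\Bigr),\qquad k\ge 0,\;0<r\le R.
\]

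Finally, for an arbitrary $\rho\in(0,r]$, I would choose the unique integer $k\ge 0$ with $\tau^{k+1}r<\rho\le\tau^{k}r$, and use the monotonicity of $\phi$ together with the last display:
\[
\phi(\rho)\;\le\;\phi(\tau^{k}r)\;\le\;(\tau^{k}r)^{\gamma}\Bigl(\frac{\phi(r)}{r^{\gamma}}+2c\tau^{-\gamma}\Bigr)\;\le\;\tau^{-\gamma}\rho^{\gamma}\Bigl(\frac{\phi(r)}{r^{\gamma}}+2c\tau^{-\gamma}\Bigr),
\]
which yields the desired inequality with $C=C(n,\gamma,c):=\max\{\tau^{-\gamma},\,2c\tau^{-2\gamma}\}$ (and one can cleanly absorb everything into a single constant using the $+1$ on the right-hand side of the conclusion).

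There is no real obstacle here; the only point that has to be handled with a little care is the very first step, namely the quantitative choice of $\tau$ in terms of $c$ and $n-\gamma$. Once $\tau$ is fixed so that $c\tau^{n}\le\tfrac12\tau^{\gamma}$, the recursion for $\psi_{k}$ is a contraction with fixed point of size $O(c\tau^{-\gamma})$, and the interpolation between the discrete scales $\tau^{k}r$ is immediate from monotonicity of $\phi$. The use of the hypothesis $\gamma<n$ is exactly at the step where we need $\tau^{n}\ll\tau^{\gamma}$ as $\tau\downarrow 0$.
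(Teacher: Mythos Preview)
Your argument is correct and is exactly the standard proof of this iteration lemma: choose $\tau\in(0,1)$ with $c\tau^{n-\gamma}\le\tfrac12$, iterate on the geometric scales $\tau^{k}r$, sum the resulting contraction, and interpolate between consecutive scales using the monotonicity of $\phi$. The paper itself does not prove this lemma---it is only stated with a reference to Campanato---so there is no alternative approach in the paper to compare with; your proof is precisely the classical one.
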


\begin{lem}
Let $v$ satisfy $\Delta^2v = 0$ in $B_R(0)$. Then there is $C$ such that
\begin{align}
\label{est biharmonic}
|v(x)| \leq \frac{C}{R^n} \int_{B_R(0)} |v| dy
\quad \text{for all } |x|\leq \frac12 R.
\end{align}
\end{lem}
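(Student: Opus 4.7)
The plan is to establish this standard interior $L^\infty$–$L^1$ estimate for biharmonic functions via the polyharmonic mean value property. First, I would normalize by scaling: the function $v_R(y) := v(Ry)$ is biharmonic on $B_1(0)$, and both sides of the target inequality transform identically under $v \mapsto v_R$, so it suffices to prove
\[
|v(x)| \leq C \int_{B_1(0)} |v|\,dy \qquad \text{for all } |x| \leq 1/2
\]
whenever $\Delta^2 v = 0$ in $B_1(0)$.

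The main tool is the biharmonic mean value identity: there is a dimensional constant $c_n > 0$ such that for every $x_0 \in B_1(0)$ and every $\rho > 0$ with $\overline{B_\rho(x_0)} \subset B_1(0)$,
\[
\frac{1}{|B_\rho|}\int_{B_\rho(x_0)} v(y)\,dy \;=\; v(x_0) + c_n \rho^2 \Delta v(x_0).
\]
I would derive this either via Pizzetti's formula for spherical means (the resulting series $\sum_{k\geq 0} a_k \rho^{2k}\Delta^k v(x_0)$ terminates at $k=1$ because $\Delta v$ is harmonic, so $\Delta^k v \equiv 0$ for $k \geq 2$) or by integrating the harmonic mean value theorem applied to $\Delta v$ over $[0,\rho]$. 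Both are classical for polyharmonic functions and are recorded in the reference \cite{G-G-S} already cited in the paper.

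Given the identity, for any $x$ with $|x| \leq 1/2$ both balls $B_{1/2}(x)$ and $B_{1/4}(x)$ lie inside $B_1(0)$, so applying the identity at these two radii yields
\[
\frac{1}{|B_{1/2}|}\int_{B_{1/2}(x)} v \;=\; v(x) + \tfrac{c_n}{4}\Delta v(x), \qquad \frac{1}{|B_{1/4}|}\int_{B_{1/4}(x)} v \;=\; v(x) + \tfrac{c_n}{16}\Delta v(x).
\]
A $2\times 2$ linear elimination then expresses $v(x)$ as an explicit linear combination of these two averages; each of them is bounded by a dimensional constant times $\int_{B_1(0)} |v|$, and undoing the scaling by $R$ produces the stated constant $C/R^n$.

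No serious obstacle arises: the only substantive step is justifying the mean value identity, which is classical; the elimination is elementary. As a remark, one could alternatively deduce the lemma by Caccioppoli estimates plus iterated Sobolev embedding, but that route naturally gives an $L^2$ right-hand side and would require an additional Moser-type descent to reach $L^1$, whereas the two-radius mean value argument yields the $L^1$ bound directly.
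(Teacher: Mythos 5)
Your proof is correct but proceeds by a genuinely different route from the paper's. The paper writes $v(x)$ as an explicit integral against a smooth compactly supported kernel, namely $v(x)=\int_{B_1}v(y)\,\Delta^2\bigl(\Gamma(x-y)\eta(x-y)\bigr)\,dy$ where $\Gamma$ is the fundamental solution of $\Delta^2$ and $\eta$ is a cutoff vanishing near the origin and equal to $1$ outside a small ball; the kernel is bounded, so the $L^1$ estimate follows at once. You instead invoke the truncated Pizzetti formula for biharmonic functions, $\fint_{B_\rho(x)}v = v(x)+c_n\rho^2\Delta v(x)$, at two concentric radii and eliminate $\Delta v(x)$ by a $2\times 2$ linear solve. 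Both arguments are clean and both rest on classical facts; the paper's representation-formula device has the advantage that it generalizes with no extra thought to higher-order polyharmonic operators and produces an explicit kernel, whereas your mean-value route is more self-contained once Pizzetti is accepted and makes the cancellation structure transparent. One small point to tighten: at $|x|=R/2$ the closed ball $\overline{B_{R/2}(x)}$ touches $\partial B_R$, so the mean value identity at radius $R/2$ is not directly available; you should use two radii strictly less than $R/2$ (say $R/4$ and $R/8$), which changes only the elimination constants and costs nothing.
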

\begin{proof}
By scaling we can restrict to $R=1$ and $v\in C^4(\overline B_1(0))$. Let $\eta\in C^\infty(\R^n)$ be a cut-off function with $\eta(x)=1$ of $|x|\geq \frac14$ and $\eta(x)=0$ for $|x|\leq \frac18$.
Let $\Gamma(x) = c_n |x|^{n-4}$ be the fundamental solution of $\Delta^2$ in $\R^n$, $c_n>0$.
Then
$$
v(x) = \int_{B_1} v(y) \Delta^2 ( \Gamma(x-y) \eta(x-y) ) \, dy
\quad \text{for } |x|\leq \frac 12
$$
and \eqref{est biharmonic} follows.
\end{proof}

\begin{proof}[Proof of Lemma~\ref{lemma a1}]
Let $R_1<1$ (close to 1), $|x|<R_1$ and $0<r<\frac{1-R_1}{2}$.
Let $u_1 = \Gamma \ast ( |u|^{p-1} u \chi_{B_r(x)} )$ where $\Gamma(x) = c_n |x|^{n-4}$ is the fundamental solution of $\Delta^2$ in $\R^n$, $c_n>0$, and $\chi_{B_r(x)}$ is the indicator function of $B_r(x)$.
Let $u_2 = u-u_1$. Then $\Delta^2 u_2 = 0$ in $B_r(x)$.
By \eqref{est biharmonic}
$$
|u_2(z)| \leq \frac{C}{r^n} \int_{B_r(x)} |u_2|
\quad\text{for } z \in B_{r/2}(x).
$$
Let $y\in B_{r/2}(x)$ and $0<\rho<\frac r2$. Integrating in $B_\rho(y)$ and using
H\"older's inequality
$$
\int_{B_\rho(y)} |u_2|^p
\leq C (\frac{\rho}{r})^n
\int_{B_r(x)} |u_2|^p .
$$
Therefore
\begin{align}
\nonumber
\int_{B_\rho(y)} |u|^p
&
\leq
C\int_{B_\rho(y)} |u_1|^p
+
 C (\frac{\rho}{r})^n
\int_{B_r(x)} |u_2|^p
\\
&
\label{a3}
\leq
 C (\frac{\rho}{r})^n
\int_{B_r(x)} |u|^p
+
C
\int_{B_r(x)} |u_1|^p  .
\end{align}
Let $\gamma_0 = n-4\frac{p}{p-1}+\delta$.
Using \eqref{riesz morrey space} with $\alpha = 4$,
$\gamma = \gamma_0$ and $\aaa$ a number such that
$ n-4\frac{p}{p-1} < \aaa < \gamma_0$
we have
$$
\int_{B_r(x)} |u_1|^p \leq C r^{n-\aaa-(n-4-\aaa)p}
\int_{B_r(x)} |u|^p .
$$
Then, combining with \eqref{a3} we obtain
\begin{align*}
\int_{B_\rho(y)} |u|^p
& \leq
 C (\frac{\rho}{r})^n
\int_{B_r(x)} |u|^p
+ C r^{n-\aaa-(n-4-\aaa)p}
\int_{B_r(x)} |u|^p
\\
&
\leq
 C (\frac{\rho}{r})^n
\int_{B_r(x)} |u|^p
+ C r^{n-\aaa-(n-4-\aaa)p + \gamma_0}
\end{align*}
for any $y\in B_{r/2}(x)$, $0<\rho<\frac r2$.
We have the validity of the inequality for $0<\rho\leq r$, possibly increasing $C$.
Using the Lemma of Campanato (Lemma~\ref{lemma campanato}),
$$
\int_{B_\rho(y)} |u|^p \leq C \rho^{n-\aaa-(n-4-\aaa)p + \gamma_0}
$$
for $0<\rho\leq r$,
which shows that $u \in L^{p,\gamma_1}(B_{R_1})$ where $R_1<1$ can be chosen arbitrarily close to 1, and $\gamma_1 = n-\aaa-(n-4-\aaa)p + \gamma_0$ can be chosen arbitrarily close to $n(1-p)+4 p + \gamma_0$.
In particular we can choose $\gamma_1>\gamma_0$. Repeating the process, we can find a decreasing sequence $R_i \to \frac45$ and an increasing sequence $\gamma_i \to n-4$ such that $u \in L^{p,\gamma_i}(B_{R_i})$. Then by Lemma~\ref{lemma a1} $u \in L^q(B_{3/4}(0)$ for all $q>1$ and by standard elliptic regularity $u\in L^\infty(B_{1/2})$.
\end{proof}

\end{document}